\newtheorem{theorem}{Theorem}[section]
\newtheorem{corollary}[theorem]{Corollary}
\newtheorem{claim}[]{Claim}
\newtheorem{lemma}[theorem]{Lemma}
\newtheorem{proposition}[theorem]{Proposition}
\theoremstyle{definition}
\newtheorem{definition}[theorem]{Definition}
\newtheorem{conjecture}[theorem]{Conjecture}
\theoremstyle{remark}
\newtheorem{remark}[theorem]{Remark}
\numberwithin{equation}{section}
\newcommand{\V}{\mathcal{V}}
\newcommand{\RV}{\mathcal{RV}}
\newcommand{\R}{\mathbb{R}}
\newcommand{\N}{\mathbb{N}}
\newcommand{\mH}{\mathcal{H}}
\newcommand{\F}{\mathcal{F}}
\newcommand{\B}{\mathcal{B}}
\newcommand{\C}{\mathcal{C}}
\newcommand{\Scal}{\mathcal{S}}
\newcommand{\mB}{\mathbb{B}}
\newcommand{\mAn}{\mathbb{A}}
\newcommand{\mZ}{\mathbb{Z}}
\newcommand{\Z}{\mathcal{Z}}
\newcommand{\M}{\mathbf{M}}
\newcommand{\mf}{\mathbf{f}}
\newcommand{\mF}{\mathbf{F}}
\newcommand{\mI}{\mathbf{I}}
\newcommand{\mR}{\mathcal{R}}
\newcommand{\btau}{\boldsymbol \tau}
\newcommand{\bleta}{\boldsymbol \eta}
\newcommand{\tM}{\widetilde{M}}
\newcommand{\tB}{\widetilde{B}}
\newcommand{\tBcal}{\widetilde{\mathcal{B}}}
\newcommand{\Area}{\textrm{Area}}
\newcommand{\spt}{\operatorname{spt}}
\newcommand{\dist}{\operatorname{dist}}
\newcommand{\Div}{\operatorname{div}}
\newcommand{\inj}{\operatorname{inj}}
\newcommand{\interior}{\operatorname{int}}
\newcommand{\Ric}{\operatorname{Ric}}
\newcommand{\Clos}{\operatorname{Clos}}
\newcommand{\rom}[1]{\expandafter\romannumeral #1}
\newcommand{\mfX}{\mathfrak{X}}
\newcommand{\bd}{\partial}
\renewcommand{\Div}{\operatorname{div}}
\title{Generic density of equivariant min-max hypersurfaces}
\author{Tongrui Wang}
\address{Institute for Theoretical Sciences, Westlake Institute for Advanced Study, Westlake University, Hangzhou, Zhejiang, 310024, China}
\email{wangtongrui@westlake.edu.cn}
\begin{document}
\maketitle
\begin{abstract}
For a compact Riemannian manifold $M^{n+1}$ acted isometrically on by a compact Lie group $G$ with cohomogeneity ${\rm Cohom}(G)\geq 2$, we show the Weyl asymptotic law for the $G$-equivariant volume spectrum. 
As an application, we show in the $C^\infty_G$-generic sense with a certain dimension assumption that the union of min-max minimal $G$-hypersurfaces (with free boundary) is dense in $M$, whose boundaries' union is also dense in $\bd M$.  
\end{abstract}

\section{Introduction}

Min-max theory is a variational theory developed by Almgren \cite{almgren1962homotopy}\cite{almgren1965theory} in the 1960s to construct minimal submanifolds of any codimension in compact Riemannian manifolds. 
In an $(n+1)$-dimensional closed manifold, the regularity theory for min-max minimal hypersurfaces was later established by Pitts \cite{pitts2014existence} ($2\leq n\leq 5$) and Schoen-Simon \cite{schoen1981regularity} ($n\geq 6$). 
Motivated by the existence results, S.-T. Yau \cite{yau1982seminar} conjectured in his celebrated 1982 Problem Section that there are infinitely many minimal surfaces in a closed $3$-manifold. 
\begin{conjecture}[\cite{yau1982seminar}]
	Every closed three-dimensional Riemannian manifold $(M^3, g)$ contains infinitely many (immersed) minimal surfaces. 
\end{conjecture}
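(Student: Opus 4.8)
The plan is to reduce Yau's conjecture to quantitative features of the volume spectrum (the $p$-widths) $\{\omega_p(M,g)\}_{p\ge 1}$ in the Almgren--Pitts and Marques--Neves min-max framework. By the regularity theory of Pitts \cite{pitts2014existence} and Schoen--Simon \cite{schoen1981regularity} together with Marques--Neves' min-max over $p$-sweepouts, each $\omega_p$ is realized by an integral varifold whose support is a smooth closed embedded minimal surface (no singular set in dimension $3$) with integer multiplicities, and the Weyl law (Liokumovich--Marques--Neves) gives $\omega_p \sim a(n)\,\mathrm{Vol}(M,g)^{1/(n+1)}\,p^{n/(n+1)}$, so $\omega_p\to\infty$. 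The subtlety is that one minimal surface may realize many widths with increasing multiplicity, so an extra argument is needed to extract infinitely many \emph{distinct} minimal surfaces.

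First I would dispatch the $C^\infty$-generic case following Irie--Marques--Neves. Suppose, for contradiction, that for some $g$ the union $\mathcal M_g$ of all closed minimal surfaces is not dense, so some metric ball $B$ satisfies $\overline B\cap\mathcal M_g=\varnothing$. Fix a nonnegative symmetric $2$-tensor $h\not\equiv 0$ supported in $B$ and set $g_t=g+th$ for $t\ge 0$, which strictly increase volume. The map $t\mapsto\omega_p(g_t)$ is monotone and Lipschitz, and at differentiability points its derivative is $\tfrac12\int\langle h,\cdot\rangle$ over the min-max varifold realizing $\omega_p(g_t)$. For a bumpy metric --- and bumpiness is $C^\infty$-generic by White's theorem --- the min-max varifold is a union of minimal surfaces with fixed multiplicities, disjoint from $B$, hence unchanged by small perturbations supported in $B$; so $\omega_p(g_t)$ would be locally constant near $t=0$ for \emph{every} $p$. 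But the Weyl law forces $\omega_p(g_t)\ge a(n)\big((1+ct)\,\mathrm{Vol}(M,g)\big)^{1/(n+1)}p^{n/(n+1)}(1+o(1))$, strictly larger than $\omega_p(g)$ for $p$ large --- a contradiction. Running this over a countable dense family of balls and a countable family of perturbations and taking a Baire intersection gives density of $\mathcal M_g$ for $C^\infty$-generic $g$; a dense union is infinite, so the conjecture holds generically.

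For the general case I would then invoke Song's argument to remove genericity. Assuming $(M,g)$ has only finitely many minimal surfaces, one extracts a distinguished stable component $S$ (``outermost'' after cutting $M$ along its minimal surfaces), forms the \emph{cylindrical manifold} by gluing half-cylinders $S\times[0,\infty)$ along $S$, and develops Almgren--Pitts min-max there. The crux is a confinement statement: the min-max minimal hypersurfaces of the cylindrical manifold have uniformly bounded area and cannot escape down the ends, so they subconverge to a minimal surface which, viewed in $M$, is new --- contradicting finiteness. This rests on the monotonicity formula, a logarithmic-cutoff/frequency estimate along the ends, and the (maximum-principle) stability of $S$.

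The hard part will be exactly this confinement step: controlling a min-max sequence near a possibly high-multiplicity stable minimal surface so that the limit varifold neither collapses onto $S$ with its area lost nor drifts off to infinity along the cylinder. In the generic setting bumpiness sidesteps this entirely; in general it is the analytic heart of the proof, where Lojasiewicz-type inequalities and the structure of minimal hypersurfaces in cylindrical ends enter.
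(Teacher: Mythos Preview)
The paper does not prove this statement: it is presented as Yau's \emph{conjecture}, serving as historical motivation, and the paper simply cites the works that resolved it (Irie--Marques--Neves \cite{irie2018density} for the generic case, Song \cite{song2018existence} in general). There is therefore no ``paper's own proof'' to compare against. Your outline is a reasonable high-level sketch of those cited works, but note two inaccuracies: the Weyl law exponent for an $(n+1)$-manifold is $p^{1/(n+1)}$, not $p^{n/(n+1)}$; and Song's confinement argument does not rely on Lojasiewicz-type inequalities but rather on comparing widths of the cylindrical manifold with those of compact truncations and exploiting the linear growth of cylinder area versus the sublinear growth $p^{1/(n+1)}$ of widths.
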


In recent years, with the rich development of the min-max theory, this conjecture has been confirmed by a series of works. 
Specifically, in \cite{marques2014min}, Marques-Neves extended the min-max theory to the multi-parameter case, which helps them to obtain an infinite number of minimal hypersurfaces \cite{marques2017existence} provided $\partial M=\emptyset$, $3\leq \dim(M)\leq 7$ and $\Ric_M >0$. 
In addition, using a Weyl's asymptotic law \cite{liokumovich2018weyl} for the volume spectrum $\{\omega_{p}(M^{n+1}, g_{_M})\}_{p\in\mZ_+}$, i.e. 
\[ \lim _{p \rightarrow \infty} \omega_{p}(M^{n+1}, g_{_M}) p^{-\frac{1}{n+1}}=a(n) \operatorname{vol}(M, g_{_M})^{\frac{n}{n+1}}, \]
 Irie-Marques-Neves \cite{irie2018density} showed the generic density of minimal hypersurfaces as a stronger generic version of Yau's conjecture.
One can also refer to \cite{li2023existence} for the generic infinite existence result in higher dimensional manifolds. 
Moreover, for general Riemannian metrics, Yau’s conjecture was eventually confirmed by Song's remarkable work \cite{song2018existence} for closed manifolds with dimension between $3$ and $7$. 

Meanwhile, in compact manifolds $M^{n+1}$ with non-empty boundary $\bd M$, the corresponding conclusions are also proved to hold for {\em free boundary minimal hypersurfaces} (abbreviated as FBMHs). 
For instance, based on the free boundary min-max theory of Li-Zhou \cite{li2021min}, the density of FBMHs in $M$ and the density of FBMHs' boundaries in $\bd M$ were accomplished in \cite{guang2021min} and \cite{wang2022existence} respectively under a generic choice of Riemannian metrics. 
Furthermore, there is also a very recent progress made by Wang \cite{wang2020existence} which resolved Yau's conjecture in the free boundary setting with full generality.

Motivated by these excellent results based on Almgren-Pitts' min-max, the author generalized and established the multi-parameter Almgren-Pitts min-max theory under certain symmetry constraints through a series of works (\cite{wang2022min}\cite{wang2023min}\cite{wang2023equivariant}). 
To be specific, consider a compact Riemannian manifold $(M^{n+1},g_{_M})$ and a compact Lie group $G$ acting as isometries on $M$ so that either 
\begin{align}
	\tag{\dag} &\mbox{$\bd M = \emptyset$ and $3\leq {\rm codim}(G\cdot x)\leq 7$ for all $x\in M$; or} \label{Eq: assumption closed} \\
	\tag{\ddag} & \mbox{$\bd M \neq \emptyset$, $3\leq n+1 \leq 7$, and ${\rm Cohom}(G):= \min_{x\in M} {\rm codim}(G\cdot x) \geq 3$.} \label{Eq: assumption with boundary}
\end{align}
Then the equivariant min-max constructions in closed (\cite{wang2022min}\cite{wang2023equivariant}) and free boundary (\cite{wang2023min}) scenarios confirmed the existence of a $G$-invariant minimal hypersurface in $M$ with (possibly empty) free boundary. 
One can also refer to \cite{ketover2016equivariant} and \cite{liu2021existence} for the existence result by other settings of equivariant min-max. 
Therefore, it now seems plausible to search for the equivariant generalization of Yau's conjecture by the equivariant min-max approach. 

By characterizing the genus of minimal surfaces, Ketover \cite{ketover2016free} found a family of dihedral symmetric minimal surfaces in the 3-dimensional Euclidean ball with three boundary connected components whose genus tends to infinity. 
In addition to the specific ambient spaces and symmetries, it was proved by the author in \cite{wang2022min}\cite{wang2023min} that there are infinitely many $G$-invariant minimal hypersurfaces with (possibly empty) free boundary when $M$ has positive Ricci curvature and strictly convex boundary $\bd M$ so that (\ref{Eq: assumption closed}) or (\ref{Eq: assumption with boundary}) is satisfied. 
Moreover, in the completely general case, the existence of at least $(l+1)$ disjoint $G$-invariant minimal hypersurfaces with (possibly empty) free boundary was also obtained in \cite{wang2022min}\cite{wang2023min}, where $l+1={\rm Cohom}(G)$ can be seen as the dimension of orbit space $M/G$.

In this paper, we will show in $C^\infty_G$-generic sense that there are infinitely many $G$-invariant minimal hypersurfaces (with free boundary) under the dimension assumptions (\ref{Eq: assumption closed}) or (\ref{Eq: assumption with boundary}). 
Indeed, we shall show a stronger result as in \cite{irie2018density}\cite{wang2022existence} that is the $C^\infty_G$-generic density of $G$-invariant FBMHs in $M$ and the $C^\infty_G$-generic density of those hypersurfaces' boundaries in $\bd M$. 
Specifically, we have the following main theorem.

\begin{theorem}\label{Thm: main density}
	Let $M^{n+1}$ be an $(n+1)$-dimensional compact manifold with (possibly empty) boundary $\bd M$ and $G$ be a compact Lie group acting by diffeomorphisms on $M$. 
	Suppose either (\ref{Eq: assumption closed}) or (\ref{Eq: assumption with boundary}) is satisfied.
	Then for a smooth $G$-invariant generic Riemannian metric $g_{_M}$ on $M$, we have 
	\begin{itemize}
		\item[(i)] the union of $G$-invariant FBMHs is dense in $M$;
		\item[(ii)] the union of $G$-invariant FBMHs' boundaries is also dense in $\bd M$.
	\end{itemize}
\end{theorem}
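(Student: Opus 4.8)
The plan is to follow the Irie--Marques--Neves strategy \cite{irie2018density}, combined with the free-boundary refinements of Guang--Li--Wang--Zhou and Wang \cite{guang2021min}\cite{wang2022existence}, all carried out in the $G$-equivariant category. The essential ingredient, which the first half of the paper establishes, is the \emph{equivariant Weyl law}: for the $G$-equivariant volume spectrum $\{\omega_p^G(M,g_{_M})\}_{p\in\mZ_+}$ one has an asymptotic of the form $\lim_{p\to\infty}\omega_p^G(M,g_{_M})\,p^{-1/(l+1)} = a\cdot \operatorname{vol}_{g_{_M}}(\text{something intrinsic to }M/G)^{l/(l+1)}$, where $l+1 = {\rm Cohom}(G)$ plays the role that $n+1$ plays in the non-equivariant case (this is why the cohomogeneity hypothesis $\geq 2$, and the finer $\geq 3$ in (\ref{Eq: assumption with boundary}), is needed). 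I would take this Weyl law as given.

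First I would set up the perturbation argument for part (i). Fix a nonempty open set $U\subset M$; by averaging we may assume the relevant bump is $G$-invariant, and I want to show that for a $C^\infty_G$-generic metric there is a $G$-invariant FBMH meeting $U$. Following \cite{irie2018density}, consider a $G$-invariant metric $h$ supported in $U$ with $h\geq 0$, $h\not\equiv 0$, and look at the one-parameter family $g_{_M}+t\,h$. If no $G$-invariant FBMH of $(M,g_{_M})$ met $U$ for \emph{all} small $t$, then the equivariant min-max widths $\omega_p^G(M,g_{_M}+th)$ would be constant in $t$ near $0$ for every $p$ (because the min-max varifolds are supported away from $U$, by the regularity theory of \cite{wang2022min}\cite{wang2023min}\cite{wang2023equivariant} and the fact that a stationary $G$-varifold not meeting $U$ is unaffected by the perturbation there). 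But the equivariant Weyl law forces $\omega_p^G$ to grow like $p^{1/(l+1)}$ with leading coefficient strictly increasing in $t$ (since adding $h\geq 0$, $h\not\equiv0$ strictly increases the relevant volume functional on $M/G$), a contradiction. Hence for a \emph{dense} set of $t$ there is a $G$-invariant FBMH meeting $U$; the standard Baire-category/Structure-Theorem argument (as in \cite{irie2018density}, using that the set of metrics admitting such a hypersurface is open once one has the right compactness, cf. \cite{wang2023equivariant}) upgrades this to $C^\infty_G$-genericity, and intersecting over a countable basis of $M$ gives (i).

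For part (ii) I would argue in the same spirit but perturb near the boundary, following \cite{wang2022existence}. Fix a nonempty relatively open $V\subset \bd M$; choose a $G$-invariant conformal-type or normal perturbation of $g_{_M}$ supported in a collar neighborhood of $V$ that strictly increases the boundary volume functional on $\bd(M/G)$ while one controls the interior contribution. The point here is the first-variation formula for FBMHs: a $G$-invariant FBMH whose boundary avoids $V$ is stationary for the perturbed metric as well, so again the widths $\omega_p^G$ would be locally constant in $t$, contradicting the (boundary-sensitive form of the) equivariant Weyl law. The density of boundaries in $\bd M$ then follows by the same Baire-category packaging over a countable basis of $\bd M$.

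The main obstacle, I expect, is not the abstract perturbation scheme — that is by now routine — but rather the \emph{regularity and structure of equivariant min-max FBMHs near the orbit stratification and the boundary} needed to make the "stationary away from $U$ implies unchanged width" step rigorous. One must know that the min-max $G$-varifold produced at level $\omega_p^G$ is, away from the singular orbits and away from $\bd M$ where it is a genuine free-boundary minimal hypersurface of the regularity classes in \cite{wang2022min}\cite{wang2023min}, and that its support localizes correctly so that a perturbation supported in $U$ (resp.\ in a collar of $V$) genuinely does not affect it when it misses $U$ (resp.\ $V$); this is exactly where the dimension hypotheses (\ref{Eq: assumption closed})--(\ref{Eq: assumption with boundary}) enter, via Schoen--Simon-type regularity for the quotient. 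A secondary technical point is verifying that the perturbed metrics can be taken $G$-invariant throughout and that the genericity statement is with respect to the $C^\infty_G$ topology, which requires the equivariant versions of the Structure Theorem and of White's bumpy-metric-type arguments; I would cite \cite{wang2023equivariant} for these and only sketch the adaptation.
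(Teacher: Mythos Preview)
Your overall strategy matches the paper's: use the equivariant Weyl law together with a metric perturbation to force some min-max $G$-hypersurface to meet a prescribed set, then package with Baire category. Two steps, however, are not as you describe them.

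First, for part (i), the justification you give for why $t\mapsto\omega_p^G(M,g_{_M}+th)$ is constant is insufficient. Knowing that the min-max hypersurface $\Sigma_p^t$ avoids $U$ only tells you that $\omega_p^G(M,g_{_M}+th)$ equals the $g_{_M}$-area of \emph{some} $G$-FBMH in $(M,g_{_M})$; different $t$ could a priori produce different such hypersurfaces with different areas, so your parenthetical ``because the min-max varifolds are supported away from $U$'' does not yield constancy. The paper (following \cite{irie2018density}) first passes to a $G$-\emph{bumpy} metric $g_{_M}^0$ via an equivariant bumpy-metric theorem (Proposition~\ref{Prop: G-bumpy is dense}), so that the set $\mathcal{C}$ of $\N$-linear combinations of areas of $G$-FBMHs is countable; then $t\mapsto\omega_p^G(M,g_{_M}^t)$ is continuous with values in $\mathcal{C}$, hence constant. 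You allude to bumpy metrics only at the end; this step is essential and must be placed before the perturbation.

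Second, and more seriously, your proposed perturbation for part (ii) does not work. A conformal or normal perturbation supported in a collar of $V\subset\bd M$ will in general affect an FBMH whose free boundary avoids $V$ but whose \emph{interior} passes through the collar; your assertion that such an FBMH ``is stationary for the perturbed metric as well'' is false. The paper (following \cite{wang2022existence}) uses a different device: take a $G$-invariant vector field $X$ supported near $V$ and pointing strictly inward along $\bd M$, let $\{f_t\}$ be its flow, and set $g_{_M}^t:=f_t^*g_{_M}^0$. Then $f_t:(M,g_{_M}^t)\to(f_t(M),g_{_M}^0)\subset(M,g_{_M}^0)$ is an isometry onto a subdomain. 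If $\Sigma$ is an FBMH in $(M,g_{_M}^t)$ with $\bd\Sigma\subset\bd M\setminus V$, then $f_t$ fixes $\bd\Sigma$ pointwise, so $f_t(\Sigma)$ is an FBMH in $(M,g_{_M}^0)$ and its area lies in the countable set $\mathcal{C}$. The contradiction with the Weyl law then comes from the fact that ${\rm Vol}\big((f_t(M))^{prin}/G,\,\tilde g_{_{M/G}}^0\big)$ is strictly decreasing in $t$; no ``boundary volume'' term in the Weyl law is needed or used.
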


\begin{remark}\label{Rem: main density}
	We mention that the above density results are also valid if one only considers the union of min-max $G$-hypersurfaces (Definition \ref{Def: min-max $(c,G)$-hypersurfaces}) realizing the equivariant volume spectrum (Definition \ref{Def: p-width}) as in Theorem \ref{Thm: Gp-width realization}.
\end{remark}

It should be noted that the dimension assumptions (\ref{Eq: assumption closed})(\ref{Eq: assumption with boundary}) are used to guarantee the regularity theory in equivariant min-max constructions (cf. \cite{wang2022min}\cite{wang2023min}\cite{wang2023equivariant}). 
If $\min_{x\in M} {\rm codim}(G\cdot x)=2$, one would get singularities in the equivariant min-max hypersurface similar to Almgren-Pitts' min-max for geodesic curves. 
Nevertheless, one can still expect a similar density result for singular $G$-invariant minimal hypersurfaces in light of \cite{liokumovich2023generic}. 
Meanwhile, for the case of $\min_{x\in M} {\rm codim}(G\cdot x)=1$, all the minimal $G$-hypersurfaces are area critical orbits, and the min-max theory is not applicable as $\dim(M/G)=1$. 
It should also be noted that there is no $G$-invariant hypersurface when $\min_{x\in M} {\rm codim}(G\cdot x)=0$. 

Additionally, because FBMHs lack an optimal regularity theory similar to the result in \cite{schoen1981regularity}, we need a stronger assumption $3\leq n+1\leq 7$ in (\ref{Eq: assumption with boundary}) than the assumption $3\leq {\rm codim}(G\cdot x)\leq 7$ in (\ref{Eq: assumption closed}). 

In particular, using \cite{schoen1981regularity}, if one allows $G$-invariant minimal hypersurfaces possess singular sets with Hausdorff dimension no more than $n-7$, then we also have the following $C^\infty_G$-generic existence of infinitely many {\em closed} $G$-invariant minimal hypersurfaces in arbitrary dimensional closed $G$-manifolds satisfying $3\leq {\rm codim}(G\cdot x)$ for all $x\in M$. 
\begin{theorem}\label{Thm: main infinitely many singular MHs}
	Let $M^{n+1}$ be an $(n+1)$-dimensional {\em closed} manifold and $G$ be a compact Lie group acting by diffeomorphisms on $M$ so that $3\leq {\rm cohom}(G):=\min_{x\in M} {\rm codim}(G\cdot x) $. 
	Then for a $C^\infty_G$-generic Riemannian metric $g_{_M}$ on $M$, there exist infinitely many {\em closed} $G$-invariant minimal hypersurfaces whose singular set has dimension no more than $n-7$. 
\end{theorem}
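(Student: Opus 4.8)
The plan is to deduce Theorem~\ref{Thm: main infinitely many singular MHs} from the density statement of Theorem~\ref{Thm: main density}(i), adapted to the closed higher-dimensional setting where the Schoen--Simon regularity theory \cite{schoen1981regularity} is used in place of the full regularity assumed under (\ref{Eq: assumption closed}). First I would note that the Weyl asymptotic law for the $G$-equivariant volume spectrum, established in this paper for all closed $G$-manifolds with ${\rm cohom}(G)\geq 2$, together with the equivariant Lusternik--Schnirelmann-type arguments of \cite{wang2022min}\cite{wang2023equivariant}, gives for a $C^\infty_G$-generic metric that the $G$-equivariant $p$-widths $\{\omega_p^G(M,g_{_M})\}$ are realized by min-max $G$-hypersurfaces which are now (possibly singular) $G$-invariant minimal hypersurfaces with singular set of Hausdorff dimension at most $n-7$, by \cite{schoen1981regularity}. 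The only structural difference from the proof of Theorem~\ref{Thm: main density} is that one replaces ``smooth $G$-invariant minimal hypersurface'' by ``$G$-invariant minimal hypersurface with $\dim\sing\leq n-7$'' throughout; the equivariant Weyl law, the bumpy-metric-type generic perturbation, and the structure of the min-max critical varifolds do not require the codimension-$\geq 3$ hypothesis on the principal orbit, only ${\rm cohom}(G)\geq 3$ (which forces the regular part of any $G$-invariant hypersurface to have codimension one in $M$ and the quotient to be at least $2$-dimensional).

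The key steps, in order, would be: (1) invoke the Weyl law of this paper to conclude $\omega_p^G(M,g_{_M})\to\infty$ and, more precisely, $\omega_p^G p^{-1/(l+1)}\to a\cdot\mathrm{vol}$-type asymptotics, so in particular infinitely many distinct values occur among the widths; (2) for a $C^\infty_G$-generic metric, upgrade each min-max critical $G$-varifold realizing $\omega_p^G$ to an integer-multiplicity $G$-invariant minimal hypersurface with the Schoen--Simon singular-set bound, using the equivariant min-max regularity machinery of \cite{wang2022min}\cite{wang2023equivariant} with \cite{schoen1981regularity} substituted at the regularity step; (3) apply the equivariant analogue of the Marques--Neves multiplicity/genericity argument (as in \cite{irie2018density} and its equivariant version underlying Theorem~\ref{Thm: main density}) to show that for a generic metric these hypersurfaces, counted with the correct area values $\omega_p^G$, cannot all coincide, forcing infinitely many geometrically distinct ones; (4) conclude as in the closed case of Theorem~\ref{Thm: main density} that genericity of the metric can be arranged simultaneously for all $p$ by taking a countable intersection of open dense sets in the $C^\infty_G$ topology.

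The main obstacle I expect is Step~(2): verifying that the Schoen--Simon compactness and regularity theory interacts correctly with the $G$-equivariance, i.e. that a min-max $G$-invariant stationary integral varifold of a closed manifold with ${\rm cohom}(G)\geq 3$ is regular away from a set of dimension $\leq n-7$ \emph{and} that this set is $G$-invariant and disjoint from the singular orbits in a controlled way. One has to check that the relevant stability or almost-minimizing property descends to the quotient (or to slices transverse to orbits) so that \cite{schoen1981regularity} applies; the hypothesis ${\rm cohom}(G)\geq 3$ is exactly what is needed so that orbits have codimension $\geq 3$ at the minimal orbits and do not artificially inflate the singular set. The other steps are, by contrast, essentially quotations of the already-developed equivariant min-max theory and the Weyl law proved in the body of this paper, so they should be routine once the regularity step is in place. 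I would therefore organize the proof as a short reduction, citing Theorem~\ref{Thm: main density}'s proof verbatim for Steps~(1),(3),(4) and spelling out only the modification needed in Step~(2).
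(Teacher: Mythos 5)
The high-level structure you propose — Weyl law, Schoen--Simon regularity for singular $G$-hypersurfaces, then a perturbation and countable-intersection argument — is the right skeleton, and your Step~(2) concern (equivariant Schoen--Simon regularity) is already handled in the paper by Remark~\ref{Rem: singular MHs}, which extends Theorems~\ref{Thm: compactness theorem} and \ref{Thm: Gp-width realization} to singular hypersurfaces. But you have misidentified where the real difficulty lies, and as a result your Step~(3) contains a genuine gap.

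In the proof of Theorem~\ref{Thm: main density}, the key input that makes the perturbation argument close is Proposition~\ref{Prop: G-bumpy is dense}: the existence of a dense set $\Gamma^{GB}$ of $G$-bumpy metrics $g^0_{_M}$ for which $\mathcal{N}_G(g^0_{_M})$ is \emph{countable}, so that the set $\mathcal{C}$ of achievable areas is countable and the continuity of $\omega^G_p$ forces $\omega^G_p(M,g^t_{_M})$ to be constant in $t$, contradicting the Weyl law. That bumpy-metric theorem is available precisely because under~(\ref{Eq: assumption closed}) the min-max hypersurfaces are smooth. In Theorem~\ref{Thm: main infinitely many singular MHs} you only assume ${\rm cohom}(G)\geq 3$ with no upper bound on $n$, so the min-max $G$-hypersurfaces may be singular, and there is \emph{no} bumpy-metric theorem for singular minimal hypersurfaces in dimensions $n\geq 7$. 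Your phrase ``the bumpy-metric-type generic perturbation \ldots do not require the codimension-$\geq 3$ hypothesis'' is where the argument breaks: you cannot produce the countable set $\mathcal{C}$ by the same route, and nothing in your Steps~(1)--(4) replaces it.

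The paper gets around this by replacing $\Gamma^{GB}$ with $\Gamma^G_0:=\{\gamma\in\Gamma^G: \mathcal{N}_G(\gamma)\ \text{is finite}\}$ and arguing contrapositively: if $\Gamma^G_0$ were not of first category, then $\interior(\overline{\Gamma^G_0})$ would be nonempty and $\Gamma^G_0$ would be dense there; one then runs the Theorem~\ref{Thm: open dense of metrics} perturbation argument inside $\interior(\overline{\Gamma^G_0})$ with $\Gamma^G_0$ serving the role of $\Gamma^{GB}$ (finiteness of $\mathcal{N}_G$ again gives a countable area set $\mathcal{C}$), obtaining that $\cap_i\Gamma^G_{\widetilde U_i}$ is residual in $\interior(\overline{\Gamma^G_0})$. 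Since every metric in $\cap_i\Gamma^G_{\widetilde U_i}$ has infinitely many singular $G$-hypersurfaces, it lies outside $\Gamma^G_0$, so $\Gamma^G_0\cap\interior(\overline{\Gamma^G_0})$ is first category; combined with $\bd(\overline{\Gamma^G_0})$ being nowhere dense, $\Gamma^G_0$ is first category in $\Gamma^G$, which is exactly the claimed genericity. This Baire-category reformulation of the perturbation argument is the essential new ingredient; your proposal, as written, does not supply a substitute for it, so Step~(3) does not go through.
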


The above theorems are mainly based on an equivariant version of the perturbation arguments in \cite{irie2018density}\cite{li2023existence} and \cite{wang2022existence} using the equivariant min-max theory in \cite{wang2022min}\cite{wang2023min}\cite{wang2023equivariant}. 
The key novelty includes the Weyl asymptotic law for the equivariant volume spectrum. 

To be specific, consider a compact Riemannian manifold $(M^{n+1},g_{_M})$ and a compact Lie group $G$ acts as isometries so that either (\ref{Eq: assumption closed}) or (\ref{Eq: assumption with boundary}) is satisfied.  
For any $p\in\mZ_+$, we collect all the non-trivial $p$-dimensional complexes in the $G$-invariant relative $n$-cycles space $\Z_n^G(M,\bd M;\mZ_2)$, which is known as the set of {\em $(G,p)$-sweepouts} of $M$ (Definition \ref{Def: p-sweepouts}) and denoted by $\mathcal{P}^G_p(M)$. 
Then the {\em $(G,p)$-with} of $M$ is defined by the following min-max value of mass
\[ \omega^G_p(M, g_{_M}) := \inf_{\Phi\in \mathcal{P}_p^G(M)} \sup_{x\in {\rm dmn}(\Phi)} \M(\Phi(x)). \]
Without group actions, this concept, also called the {\em volume spectrum} of $M$, was first proposed and studied by Gromov \cite{gromov1988dimension}\cite{gromov2003isoperimetry} and later Guth \cite{guth2009minimax} as the analog of the spectrum for Laplacian. 
In the proof of Yau's conjecture, the asymptotic behavior of the volume spectrum plays an important role (cf. \cite{marques2017existence}\cite{irie2018density}\cite{song2018existence}). 
Therefore, it is also a crucial step to study the above {\em equivariant volume spectrum} in extending Yau's conjecture. 

By generalizing the bend-and-cancel technique of Guth \cite{guth2009minimax}, the author has shown in \cite{wang2022min} that the $(G,p)$-width has a non-linear growth rate determined by the dimension of the orbit space, i.e. 
\[ C_1 p^{\frac{1}{l+1}} \leq \omega^G_p(M, g_{_M})  \leq C_2 p^{\frac{1}{l+1}} \qquad\mbox{for all $p\in \mZ_+$},\]
where $l+1 = {\rm Cohom}(G)$ and $0<C_1<C_2$ are constants depending only on $(M^{n+1},g_{_M})$ and $G$. 
Regarding $\{\omega_p^G(M, g_{_M}) \}_{p=1}^\infty$ as the volume spectrum of the singular orbit space $M/G$ with respect to a weighted area functional, we further set out the following Weyl's asymptotic law as an equivariant generalization of Gromov's conjecture (\cite[\S 8]{gromov2003isoperimetry}). 
\begin{theorem}[Weyl Law for the Equivariant Volume Spectrum]\label{Thm: main theorem weyl law}
	Suppose $(M^{n+1}, g_{_M})$ is a connected compact Riemannian manifold with smooth (possibly empty) boundary, and $G$ is a compact Lie group acting by isometries on $M$ of cohomogeneity ${\rm Cohom}(G)=l+1\geq 2$. 
	Let $M^{prin}$ be the union of all principal orbits in $M$ and $g_{_{M/G}}$ be the metric on $M^{prin}/G$ induced by the Riemannian submersion $\pi: M^{prin}\to M^{prin}/G$. 
	Define $\tilde{g}_{_{M/G}}([p]) := \big(\mH^{n-l}(\pi^{-1}([p]))\big)^{\frac{2}{l}}\cdot g_{_{M/G}}([p])$ as a weighted metric. 
	Then,
	\begin{eqnarray*}
		\lim_{p\to\infty}p^{-\frac{1}{l+1}}\omega_p^G(M, g_{_M}) &=&  a(l){\rm Vol}(M^{prin}/G, \tilde{g}_{_{M/G}})^{\frac{l}{l+1}}
		\\
		&=& a(l)\cdot \Big( \int_{M} (\mH^{n-l}(G\cdot q))^{\frac{1}{l}}d\mH^{n+1}(q) \Big)^{\frac{l}{l+1}},
	\end{eqnarray*}
	where $a(l)$ is a constant depending only on $l$ given by \cite[Theorem 1.1]{liokumovich2018weyl}. 
\end{theorem}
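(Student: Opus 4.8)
The plan is to reduce the statement, via the orbit-space dictionary underlying \cite{wang2022min}\cite{wang2023min}\cite{wang2023equivariant}, to the Weyl law for the $(l+1)$-dimensional weighted space $(M/G,\tilde g_{_{M/G}})$, and then to run the Liokumovich--Marques--Neves argument \cite{liokumovich2018weyl} with the conformal weight and the non-principal strata taken into account. Concretely, a $G$-invariant relative $n$-cycle in $M$ is the lift of a relative $l$-cycle in the orbit space, and under this correspondence the mass $\M$ equals the mass measured in $\tilde g_{_{M/G}}$: by the coarea formula for $\pi\colon M^{prin}\to M^{prin}/G$ the $n$-area of the lift of an $l$-cell $S$ is $\int_S\mH^{n-l}(\pi^{-1}(\cdot))\,d\mH^l_{g_{_{M/G}}}$, which is exactly the $\tilde g_{_{M/G}}$-area of $S$ (the weight being conformal of factor $(\mH^{n-l}(\pi^{-1}(\cdot)))^{1/l}$ on the $l$-dimensional $S$); the same computation, with $\mH^{n+1}(M\setminus M^{prin})=0$, also identifies the two forms of the limit, both equal to $V:={\rm Vol}(M^{prin}/G,\tilde g_{_{M/G}})$. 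Hence $\omega^G_p(M,g_{_M})$ is the $p$-width of $(M/G,\tilde g_{_{M/G}})$, and with $a(l)$ the Euclidean Weyl constant of \cite[Theorem 1.1]{liokumovich2018weyl} in dimension $l+1$ it suffices to show $\limsup_p p^{-\frac1{l+1}}\omega^G_p(M,g_{_M})\le a(l)V^{\frac l{l+1}}$ and the matching lower bound; both quantities are finite and positive by the growth estimate of \cite{wang2022min}. (If $\bd M\neq\emptyset$, $M/G$ acquires a boundary face from $\bd M$ and one uses throughout the free-boundary versions of all the objects involved.)

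\emph{Upper bound.} Fix $\epsilon>0$ and subdivide $M/G$ into finitely many pairwise disjoint pieces whose pull-backs exhaust $M$ up to a null set: all but a bounded number are small cubes $Q_i\subset M^{prin}/G$, and the rest form a thin neighbourhood $W$ of the non-principal image of arbitrarily small $\tilde g_{_{M/G}}$-volume. Over a principal cube the slice theorem identifies $\pi^{-1}(Q_i)$ $G$-equivariantly with $(G/H)\times Q_i$ on which $G$ acts only on the first factor, so a relative $G$-invariant sweepout of $\pi^{-1}(Q_i)$ is exactly a relative sweepout of $(Q_i,\tilde g_{_{M/G}})$ --- whose metric, for $Q_i$ small, is uniformly close to a constant conformal multiple of the Euclidean metric --- and these glue along the shared faces $(G/H)\times(\text{face})$ as in the non-equivariant case. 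Running the bend-and-cancel construction of Guth \cite{guth2009minimax}, refined in \cite{liokumovich2018weyl}, on the $Q_i$, sweeping $\pi^{-1}(W)$ cheaply (the orbits there are small, so $G$-invariant cycles meeting $\pi^{-1}(W)$ have small mass), and choosing the split $p=\sum_i p_i$ proportional to ${\rm Vol}(Q_i,\tilde g_{_{M/G}})$ --- the balanced choice forced by the frequency matching along the glued faces --- builds a genuine $(G,p)$-sweepout $\Phi$ of $M$ with $\sup_x\M(\Phi(x))\le (a(l)+\epsilon)V^{\frac l{l+1}}p^{\frac1{l+1}}+o(p^{\frac1{l+1}})$, exactly as in \cite{liokumovich2018weyl}. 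Letting $\epsilon\to0$ and refining the subdivision gives the upper bound.

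\emph{Lower bound.} I would prove the equivariant superadditivity $\omega^G_{p_1+\cdots+p_N}(M)\ge\sum_{i=1}^N\omega^G_{p_i}(U_i)$ for pairwise disjoint $G$-invariant open $U_i\subset M$ and arbitrary $p_i\in\mZ_+$, by the Lusternik--Schnirelmann argument of \cite{liokumovich2018weyl}: restrict a near-optimal $(G,\sum p_i)$-sweepout of $M$ to the $U_i$ and combine the product decomposition of $\Z_n^G$ over disjoint regions with a relative-cohomology/cup-length estimate for the equivariant fundamental class $\bar\lambda^G$. With $U_i=\pi^{-1}(Q_i)$ as before, the slice identification gives $\omega^G_{p_i}(U_i)=\omega_{p_i}(Q_i,\tilde g_{_{M/G}})\ge(a(l)-\epsilon)\tilde v_i^{\frac l{l+1}}p_i^{\frac1{l+1}}$ for $Q_i$ small and $p_i$ large, where $\tilde v_i:={\rm Vol}(Q_i,\tilde g_{_{M/G}})$, by the Euclidean Weyl law of \cite{liokumovich2018weyl}. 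Splitting $p=\sum_i p_i$ proportionally to the $\tilde v_i$ and using that the geometric mean $(v,q)\mapsto v^{\frac l{l+1}}q^{\frac1{l+1}}$ is positively homogeneous of degree one and concave, hence superadditive --- so that $\sum_i\tilde v_i^{\frac l{l+1}}p_i^{\frac1{l+1}}\ge(\sum_i\tilde v_i)^{\frac l{l+1}}p^{\frac1{l+1}}$ --- then refining the subdivision so $\sum_i\tilde v_i\to V$ and sending $\epsilon\to0$ gives $\liminf_p p^{-\frac1{l+1}}\omega^G_p(M,g_{_M})\ge a(l)V^{\frac l{l+1}}$.

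\emph{Main obstacle.} The crux is the interplay between the non-principal strata of $M/G$ and the degeneration of $\tilde g_{_{M/G}}$ there: one must (a) transplant the cutting, gluing, restriction, and Lusternik--Schnirelmann steps of \cite{liokumovich2018weyl} into the $G$-equivariant category, which rests on the Almgren-type isomorphism for $\Z_n^G$, the equivariant class $\bar\lambda^G$, and the no-mass-concentration condition of \cite{wang2022min}\cite{wang2023min}\cite{wang2023equivariant}; and (b) show that a small neighbourhood of the non-principal image contributes only $o(p^{\frac1{l+1}})$ to $\omega^G_p$, so that the Euclidean model over principal orbits fixes the leading term with constant exactly $a(l)$ --- this uses the smallness of the weighted volume near the strata but is delicate since $\tilde g_{_{M/G}}$ is not a genuine metric there and $M/G$ is an orbit space rather than a smooth manifold. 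A subsidiary but essential point, again needed to pin down $a(l)$, is the clean reduction over small contractible pieces of $M^{prin}/G$ of equivariant (relative) sweepouts to ordinary sweepouts of a cube carrying a conformal weight uniformly close to a constant, so that \cite[Theorem 1.1]{liokumovich2018weyl} applies verbatim on each piece.
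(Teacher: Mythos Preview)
Your lower-bound argument via Lusternik--Schnirelmann superadditivity over disjoint $G$-invariant tubes in $M^{prin}$, followed by the proportional split of $p$ and the concavity of $(v,q)\mapsto v^{\frac{l}{l+1}}q^{\frac{1}{l+1}}$, matches the paper's Theorem~\ref{Thm: lower bound weyl law} essentially verbatim.

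For the upper bound, however, the paper takes a different and cleaner route that sidesteps precisely the obstacle you flag. Rather than redoing the bend-and-cancel construction equivariantly and attempting to sweep the non-principal neighbourhood $\pi^{-1}(W)$ directly, the paper proceeds in two steps. First, when $M=M^{prin}$ the map $\Xi:[\partial\Omega]\mapsto[\partial\pi(\Omega)]$ is shown to be an \emph{isometry} from $\Z_n^G(M,\partial M;\M_{g_{_M}};\mZ_2)$ onto $\Z_l(M/G,\partial(M/G);\M_{\tilde g_{_{M/G}}};\mZ_2)$ (Theorem~\ref{Thm: homeomorphism} and \eqref{Eq: homeomorphism is isometry}), so $\omega_p^G(M,g_{_M})=\omega_p(M/G,\tilde g_{_{M/G}})$ exactly and \cite[Theorem~1.1]{liokumovich2018weyl} applies as a black box to the smooth $(l+1)$-manifold $M/G$ --- no equivariant bend-and-cancel is needed. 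Second, for general $M$, the paper iteratively excises small tubular neighbourhoods of the closures of the non-principal strata to obtain a compact $G$-domain $M_{I+1}\subset M^{prin}$ with piecewise smooth boundary, and builds a $G$-equivariant Lipschitz map $F=F_0\circ\cdots\circ F_I$ with $F(M_{I+1})=M$ and ${\rm Lip}(F)\le(1+\epsilon)^{I+1}$. Pushing forward a near-optimal $(G,p)$-sweepout of $M_{I+1}$ through $F$ (after lifting to Caccioppoli sets on the double cover $SX$) yields a $(G,p)$-sweepout of $M$ whose mass is at most $(1+\epsilon)^{n(I+1)}$ times the original plus the fixed additive term $(1+\epsilon)^{n(I+1)}\mH^n(\partial M_{I+1})$; since $\mH^n(\partial M_{I+1})$ is bounded in terms of $\mH^n(\partial M)$ and $\mH^n(M\setminus M^{prin})<\infty$, this additive term is $O(1)=o(p^{\frac{1}{l+1}})$. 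Thus one never sweeps $\pi^{-1}(W)$, never confronts the degeneration of $\tilde g_{_{M/G}}$ at the strata, and never treats $M/G$ as a singular space: the inflation map $F$ fills in the excised region at $O(1)$ cost. In particular, your sentence ``$\omega_p^G(M,g_{_M})$ is the $p$-width of $(M/G,\tilde g_{_{M/G}})$'' is only literally correct in the single-orbit-type case, and the paper's argument does not attempt to make sense of it globally.
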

\begin{remark}
	By analogy with the equidistribution result in \cite{marques2019equidistribution}, we conjecture the $C^\infty_G$-generic existence of a sequence of $G$-invariant minimal hypersurfaces whose projections are equidistributed in $M/G$ under weighted metric $\tilde{g}_{_{M/G}}$. 
\end{remark}

If $M$ contains only a single orbit type, i.e. $M=M^{prin}$. (For instance, when $G$ acts freely on $M$.) 
Then the quotient map $\pi: M\to M/G$ is a Riemannian submersion so that $\Area_{g_{_M}}(\Sigma) = \Area_{\tilde{g}_{_{M/G}}}(\pi(\Sigma))$ for all $G$-invariant hypersurface $\Sigma$ by the co-area formula (cf. \cite{hingston1984equivariant}). 
Therefore, we shall have $\omega_p^G(M, g_{_M}) =  \omega_p(M/G, \tilde{g}_{_{MG}})$ in this case, which further indicates Theorem \ref{Thm: main theorem weyl law} by \cite[Theorem 1.1]{liokumovich2018weyl}. 

For general $G$-actions, we shall make use of the density of $M^{prin}$ to cut off a small $G$-invariant neighborhood of $M\setminus M^{prin}$ and obtain a compact $G$-invariant domain $\Omega$ in $M^{prin}$ with piecewise smooth boundary. 
Note the boundary of the tubular neighborhood has finite area depending on $\mH^n(M\setminus M^{prin})<\infty$, which is negligible compared to $p^{\frac{1}{l+1}}$. 
Hence, by constructing a $G$-equivariant map $F$ inflating $\Omega$ to the whole manifold $M$, we can associate any $(G,p)$-sweepout $\Phi $ in $\Omega$ to an $(G,p)$-sweepout $\Psi $ in $M$ so that 
$\M(\Psi(x)) \leq {\rm Lip}(F)^{n} \cdot \big[ \M(\Phi(x)) + (1+2\mH^n(M\setminus M^{prin} ) + \mH^n(\partial M) ) \big]$. 
For $M\setminus\Omega$ sufficiently small, ${\rm Lip}(F)$ will be sufficiently close to $1$, which gives the upper limit $\limsup_{p\to\infty}p^{-\frac{1}{l+1}} \omega^G_p(M,g_{_M})$ as the right hand side in Theorem \ref{Thm: main theorem weyl law}. 
Meanwhile, the lower limit $\liminf_{p\to\infty}p^{-\frac{1}{l+1}} \omega^G_p(M,g_{_M})$ follows similarly to \cite[Theorem 3.1]{liokumovich2018weyl} by the Lusternik-Schnirelmann theory since $\Omega \subset\subset M$.

Finally, combining the Weyl law for equivariant volume spectrum with the equivariant min-max construction \cite{wang2022min}\cite{wang2023min}\cite{wang2023equivariant} and the perturbation arguments in \cite{irie2018density}\cite{li2023existence}\cite{wang2022existence}, we conclude the $C^\infty_G$-generic density of $G$-invariant FBMHs (and their boundaries) in manifolds with group actions satisfying (\ref{Eq: assumption closed}) or (\ref{Eq: assumption with boundary}) as the main applications Theorem \ref{Thm: main density}, \ref{Thm: main infinitely many singular MHs}.

\subsection{Outline}
In Section \ref{Sec: preliminary}, we collect some notations in Lie group actions, geometric measure theory, and free boundary minimal hypersurfaces. 
We also introduce the equivalent classes of relative cycles in the formulation of integer rectifiable currents as well as the formulation of integral currents, which are equivalent by Proposition \ref{Prop: equivalence between two formulation}. 
In Section \ref{Sec: min-max setting}, we first introduce the isomorphism between $\Z_n^G(M,\partial M;\mZ_2)$ and $H_{n+1}(M, \partial M;\mZ_2)$. 
Then we generalize the volume spectrum into the equivariant case. 
Using the compactness argument in \cite{wang2023equivariant}, we further show the equivariant volume spectrum can be realized by the area of some min-max $G$-hypersurfaces with free boundary. 
After that, the Weyl asymptotic behavior (Theorem \ref{Thm: main theorem weyl law}) is proved in Section \ref{Sec: weyl law}. 
Finally, we show the $C^\infty_G$-generic density of $G$-invariant FBMHs and their boundaries by a perturbation argument in Section \ref{Sec: denseness of FBMHs}.

{\bf Acknowledgement.} The author would like to thank Prof. Gang Tian for his constant encouragement. 
	Part of this work was done during the author's visit to Prof. Xin Zhou at Cornell University; he is grateful for their hospitality.
	The author also thanked Zhiang Wu and Yangyang Li for helpful discussions. 
	The author is partially supported by China Postdoctoral Science Foundation 2022M722844.

\section{Preliminary}\label{Sec: preliminary}
 
In this paper, we consider a connected compact Riemannian $(n+1)$-dimensional manifold $(M^{n+1}, g_{_M})$ with smooth (possibly empty) boundary $\partial M$. 
Let $G$ be a compact Lie group acting as isometries on $M$. 
Then we denote by $M/G$ the orbit space of $M$, which is a Hausdorff metric space with induced distance $\dist_{M/G} ([p], [q]) := \dist_M(G\cdot p, G\cdot q) $. 
We also denote by $\mu$ the bi-invariant Haar measure on $G$ which has been normalized to $\mu(G)=1$.

To begin with, we collect some definitions for group actions. 
Given $p\in M$, let $G_p:=\{g\in G: g\cdot p = p\}$ be the isotropy group of $p$, which is a Lie subgroup of $G$. 
One easily verifies that $g\cdot G_p \cdot g^{-1} = G_{g\cdot p}$. 
Given any Lie subgroup $H$ of $G$, denote by $(H)$ the conjugate class of $H$ in $G$.  
Then we say an orbit $G\cdot p$ has the {\em $(H)$ orbit type} if $G_p\in (H)$. 
Define the {\em $(H)$ orbit type stratum} $M_{(H)}$ by the union of orbits with $(H)$ orbit type:
\[M_{(H)}:=\{p\in M: G_p\in (H)\}.\] 
It follows from the compactness of $M$ that there are only a finite number of different orbit types in $M$ (\cite[Proposition 2.2.5]{berndt2016submanifolds}). 
In addition, we also have a partial order on the set of orbit types. 
Namely, for any subgroups $H$ and $K$ of $G$, we write $(H)\leq (K)$ if $H$ is conjugate to a subgroup of $K$. 
One verifies that $M_{(K)}\subset \Clos(M_{(H)})$ provided $(H)\leq (K)$.
It is also well known that $M_{(H)}$ is a disjoint union of smooth embedded submanifolds of $M$, and $\Clos(M_{(H)}) = \cup_{(H)\leq (G_x)} M_{(G_x)}$ is a closed $G$-invariant submanifold.  
Hence, the decomposition by orbit types gives a {\em stratification} structure of $M$ as well as $M/G$. 

Moreover, in the connected manifold $M$, there is a unique minimal orbit type $(P)$, i.e. $(P)\leq (G_p)$ for all $p\in M$, which is known as the {\em principal orbit type}. 
Denote by 
\[M^{prin}:=M_{(P)}\subset M\]
the union of all principal orbits, which forms an open dense $G$-invariant submanifold of $M$. 
Then, we define the {\em cohomogeneity} ${\rm Cohom}(G)$ of $G$ by the co-dimension of a principal orbit, i.e. ${\rm Cohom}(G)=\min_{p\in M}{\rm codim}(G\cdot p)$. 
Unless otherwise specified, we always assume that the cohomogeneity ${\rm Cohom}(G)$ is at least $2$. 

\subsection{Notations in manifolds and Euclidean spaces}\label{Subsec: notations in manifolds}
Next, we introduce some notations in manifolds and Euclidean spaces. 

By an equivariant extending argument (cf. \cite[Appendix A]{wang2023min}), there exists a connected $(n+1)$-dimensional closed Riemannian $G$-manifold $(\widetilde{M}, g_{_{\widetilde{M}}})$ with $G$ acts by isometries so that $M$ is equivariantly isometrically embedded in $\widetilde{M}$. (Indeed, $M$ is isometric to a branch of $\widetilde{M}$). 
Therefore, we always regard $M$ as a compact domain with smooth boundary in a closed Riemannian $G$-manifold $(\widetilde{M}, g_{_{\widetilde{M}}})$. 

In addition, by the main result in \cite{moore1980equivariant}, there exists an orthogonal representation $\rho:G\to O(L)$ and a $G$-equivariant isometric embedding from $\widetilde{M}$ into $\R^L$, for some $L\in\N$. 
Hence, we can always consider $\widetilde{M}$, as well as $M$, to be a manifold in $\R^L$ with induced Riemannian metric and induced group $G$ actions $g\cdot x := \rho(g)(x)$ for all $ g\in G, x\in \R^L$. 

Now, for any $0<s<t<r$, we use the following notations:
\begin{itemize}
	\item $\pi$: the natural projection $\pi:\widetilde{M}\mapsto \widetilde{M}/G$ defined by $G\cdot p \mapsto [p]$;
	\item $\Clos(A)$: the closure of a subset $A$; 
	\item $\tB_r(A)$: the open geodesic $r$-neighborhood of $A\subset\widetilde{M}$;
	\item $B_r(A)$: the open $r$-neighborhood of $A\subset\widetilde{M}/G$ under the distance $\dist_{\widetilde{M}/G}$;
	\item $\mB_r(A)$: the open Euclidean $r$-neighborhood of $A\subset\R^L$;
	\item $\mAn_{s,t}(A)$: the open Euclidean annulus around $A\subset\R^L$ given by $\mB_t(A)\setminus\mB_s(A) $;
\end{itemize}
A subset $A\subset \widetilde{M}$ or $A\subset M$ is said to be {\em $G$-invariant}, if $A=G\cdot A := \cup_{p\in A}G\cdot p$. 
Since $G$ acts by isometrics, we see $\tB_r(A)$, $\mB_r(A)$ and $\mAn_{s,t}(A)$ are all $G$-invariant open sets provided $A$ is $G$-invariant. 
In addition, if $A\subset \widetilde{M}$ is a $G$-invariant submanifold of $\tM$ (abbreviated as $G$-submanifold), then we denote by 
\[\exp_{A}^\perp : {\bf N}A \to \tM \]
the normal exponential map of $A\subset \tM$, where ${\bf N}A$ is the normal bundle over $A$ in $\tM$. 
Moreover, for $G$-submanifold $A\subset \tM$, we can define the $G$-actions on ${\bf N}A$ by the tangent map $g\cdot v := dg(v)$ for all $g\in G, v\in {\bf N}A$. 
Hence, $\exp_{A}^\perp$ gives a $G$-equivariant diffeomorphism in a small $G$-invariant neighborhood $\tB_{\inj(A)}(A)$, i.e. $\exp_{A}^\perp(dg(v)) = g\cdot \exp_{A}^\perp(v)$ for $\exp_{A}^\perp(v)\in \tB_{\inj(A)}(A)$, where $\inj(A)$ stands for the injectivity radius of $A\subset \tM$. 

In particular, if we take the closed $G$-submanifold as an orbit $G\cdot p\subset \tM$, then we also denote by $\tB_r^G(p)$ the open geodesic tube of radius $r$ centered at $G\cdot p$.

\subsection{Geometric measure theory}

In this subsection, we introduce some notations in geometry measure theory, which are referenced from \cite{pitts2014existence} and \cite{simon1983lectures}. 

Let $\RV_k (M)$ be the space of rectifiable $k$-varifolds in $\R^L$ supported in $M$. 
Then we define $\V_k(M)$ to be the closure of $\RV_k (M)$ in the weak topology, and define the $\mF$-metric on $\V_k(M)$ as in \cite[2.1(19)]{pitts2014existence}, which induces the weak topology on any mass bounded subset of $\V_k(M)$. 
For any $V\in \V_k(M)$, denote by $\|V\|$ the weight measure on $M$ induced by $V$.
In addition, if $g_\#V=V$ for all $g\in G$, then we say $V\in \V_k(M)$ is $G$-invariant, and denote by $\V_k^G(M)$ the subspace of $\V_k(M)$ containing all the $G$-invariant $k$-varifolds in $M$.

Let $\mR_k(M; \mZ_2)$ (resp. $\mR_k(\partial M; \mZ_2)$) be the space of $\mZ_2$-coefficients rectifiable $k$-currents in $\R^L$ supported in $M$ (resp. $\partial M$). 
For any $T\in \mR_k(M; \mZ_2)$, denote by $|T|$ and $\|T\|$ the integral varifold and the Radon measure induced by $T$. 
Additionally, the mass norm and the flat semi-norm on $\mR_k(M; \mZ_2)$ are denoted by $\M$ and $\F=\F^M$ (see \cite[4.2.26]{federer2014geometric}). 

Let $\C(M)$ be the collection of all Caccioppoli sets in $M$, i.e. the sets with finite perimeter. 
Then, consider the space of $k$-dimensional relative $\mZ_2$-cycles in $(M,\bd M)$
\[Z_{k}(M, \partial M; \mZ_2) := \{ T\in \mR_k (M;\mZ_2) : \spt(\partial T)\subset \partial M \}.\] 
Define the equivalent relation $\sim$ in $Z_{k}(M, \partial M; \mZ_2)$ (see \cite{li2021min}) by
\begin{eqnarray*}
	T\sim S ~~\Leftrightarrow ~~ T-S\in \mR_k(\partial M; \mZ_2)
\end{eqnarray*}
for all $T,S\in Z_{k}(M, \partial M; \mZ_2)$.
Let $[T]$ denote the equivalent class of $T\in Z_{k}(M, \partial M; \mZ_2)$ for the relation $\sim$. 
Then for any such equivalent class $\tau = [T]$, the mass norm $\M$, the flat norm $\F$ and the support of $\tau$ are given as in \cite{li2021min} by 
\[\M(\tau):= \inf \{\M(S):~ S\in \tau \} , \quad \F(\tau):=\inf\{\F(S):~ S\in\tau\}, \quad \spt(\tau) := \cap_{S\in\tau} \spt(S) .\]
Now, we define the space of equivalence classes of {\em boundary type} relative $\mZ_2$-cycles as  
\[ \Z_n(M,\bd M;\mZ_2) := \left\{ [\bd \Omega] : \Omega\in \C(M) \right\} ,\]
which is the connected component of $0$ in $Z_n(M,\bd M;\mZ_2)/\sim$ under the $\F$-topology.

One benefit of this formulation is that for any $\tau\in \Z_n( M, \partial M; \mZ_2)$ we can find a unique $T\in \tau$ with $T\llcorner \partial M = 0$, which is known as the {\em canonical representative} of $\tau$. 
Indeed, the canonical representative of $[\bd \Omega]$ with $\Omega\in\C(M)$ is given by $T:=\bd\Omega \llcorner (M\setminus\bd M)$, which satisfies $\M(T) = \M(\tau)$ and $\spt(T) = \spt(\tau)$ (see \cite{li2021min}). 
Moreover, for $\tau,\sigma \in Z_n(M,\bd M;\mZ_2)/\sim$, we also define the $\mF$-distance by 
\[ \mF(\tau,\sigma) := \F(\tau -\sigma) + \mF(|T|,|S|),\]
where $T\in\tau$ and $S\in\sigma$ are the canonical representatives. 

Next, for any $g\in G$, since $g:M\to M$ is an isometry, we can consider the push forward of currents by $g$, and say that a current $T$ is $G$-invariant if $g_\#T=T$, $\forall g\in G$. 
Therefore, we define the following spaces of $G$-invariant elements:
\begin{itemize}
	\item $\mR_k^G(M; \mZ_2) := \left\{ T\in \mR_k(M; \mZ_2) :~ g_\#T = T, ~\forall g\in G \right\}$;
	\item $\C^G(M) := \left\{\Omega\in \C(M) : ~g\cdot \Omega= \Omega,~\forall g\in G \right\}$;
	\item $Z_{n}^G(M, \partial M; \mZ_2) := \left\{ T\in Z_{n}(M, \partial M; \mZ_2) : ~ g_\#T = T, ~\forall g\in G \right\} $;
	\item $\Z_n^G(M,\bd M;\mZ_2) := \left\{ [\partial \Omega ] :~ \Omega\in \C^G(M)\right\}$. 
\end{itemize}
We mention that $\Z_n^G(M,\bd M;\mZ_2) \subsetneq \{ \tau\in \Z_n(M,\bd M; \mZ_2) : g_\#\tau=\tau, \forall g\in G \}$ in general. 
Additionally, the mass norm $\M$ and the flat semi-norm $\F$ are naturally induced to these subspaces of $G$-currents and $G$-equivalence classes. 
Indeed, since $G$ acts by isometries, we have $\M(T) = \M(g_\#T)$, $\F(T) = \F(g_\#T)$ for any $T\in \mR_k(M; \mZ_2)$, and thus these subspaces of $G$-invariant elements are closed subspaces.

In \cite{wang2023min}, the author used the above formulation to demonstrate the equivariant free boundary min-max constructions. 
Meanwhile, another formulation using integral currents is used in \cite{almgren1962homotopy}\cite{liokumovich2018weyl}. 
Specifically, let $\mI_k(M,\mZ_2):=\{T\in \mR_k(M;\mZ_2): \bd T\in \mR_{k-1}(M;\mZ_2)\}$ be the space of $k$-dimensional integral $\mZ_2$-flat chains. 
Define
\begin{eqnarray*}
	Z_{n,rel}(M, \partial M; \mZ_2) &:=& \{ T\in \mI_n (M;\mZ_2) :~ \spt(\partial T)\subset \partial M \} ;
	\\
	T\sim_{rel} S &\Leftrightarrow &T-S\in \mI_n(\partial M; \mZ_2),
\end{eqnarray*}
for any $T,S\in Z_{n,rel}(M, \partial M; \mZ_2)$.
Then we denote by $[T]_{rel}$ the equivalent class of $T\in  Z_{n,rel}(M, \partial M; \mZ_2)$ for the relation $\sim_{rel}$, and denote by 
\begin{itemize}
	\item $\Z_{n,rel}( M, \partial M; \mZ_2) := \{[\bd \Omega]_{rel} : ~ \Omega\in \C(M) \}$;
	\item $\mI_k^G(M;\mZ_2) := \{ T\in \mI_k(M;\mZ_2) :  ~ g_\# T=T, ~\forall g\in G\}$;
	\item $Z_{n,rel}^G(M, \partial M; \mZ_2) := \{ T\in Z_{n,rel}(M, \partial M; \mZ_2) :~ g_\# T=T, ~\forall g\in G \}$;
	\item $\Z_{n,rel}^G( M, \partial M; \mZ_2) := \{[\bd \Omega]_{rel} : ~ \Omega\in \C^G(M) \}$.
\end{itemize}
Similarly, the mass norm and the flat norm for any such equivalent class $\tau = [T]_{rel}$ are defined by $\M(\tau):= \inf \{\M(S):S\in \tau \} $ and $ \F(\tau):=\inf\{\F(S): S\in\tau\} $ respectively. 
In addition, $[T]_{rel}\subset [T]$ for all $T\in Z_{n,rel}(M, \partial M; \mZ_2)$, and thus 
\begin{equation}\label{Eq: norm in two formulation}
	\M([T]) \leq \M([T]_{rel}), \qquad \F([T]) \leq \F([T]_{rel}).
\end{equation}
On the other hand, by the slicing trick in \cite[Lemma 3.8]{li2021min}, we have the following lemma.
\begin{lemma}\label{Lem: equivalence in two formulation}{\rm (\cite[Lemma 3.8]{li2021min}\cite[Lemma 3.6]{wang2023min})} 
	For any $T\in Z_n(M,\partial M;\mZ_2 )$, there exists a sequence $\{T_i\}_{i\in\N}\subset Z_{n,rel}(M, \partial M; \mZ_2)$ so that for each $i\in\N$, we have $T\sim T_i$ and $\lim_{i\to\infty} \M(T_i - T) = 0$. 
	Moreover, if $T\in Z_n^G(M,\partial M;\mZ_2 )$ is $G$-invariant, then we can also choose $\{T_i\}_{i\in\N}\subset Z_{n,rel}^G(M, \partial M; \mZ_2)$ to be $G$-invariant.
\end{lemma}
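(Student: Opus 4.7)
The strategy is to correct $T$ by adding a small $n$-rectifiable chain $Q_\delta$ supported entirely on $\partial M$, chosen so that the resulting boundary becomes rectifiable. Because $Q_\delta\in\mR_n(\partial M;\mZ_2)$, this modification automatically lies in the $\sim$-equivalence class of $T$, and controlling $\M(Q_\delta)$ yields the required mass approximation. No interior modification of $T$ is attempted, which is essential since the relation $\sim$ forbids any change off $\partial M$.

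First, I would view $\partial T$ intrinsically as a flat $(n-1)$-cycle on the closed smooth $n$-manifold $\partial M$; this is well-defined since $\spt(\partial T)\subset\partial M$ and $T$ is rectifiable in $\widetilde{M}$. Then, applying the flat-norm density of polyhedral chains on $\partial M$ (in the spirit of \cite[4.2.26]{federer2014geometric}), for each $\delta>0$ I decompose
\[
\partial T \;=\; P_\delta + R_\delta + \partial Q_\delta,
\]
where $P_\delta$ is a polyhedral $(n-1)$-chain on $\partial M$, $R_\delta\in\mR_{n-1}(\partial M;\mZ_2)$, $Q_\delta\in\mR_n(\partial M;\mZ_2)$, and $\M(R_\delta)+\M(Q_\delta)<\delta$. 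Setting $T_\delta := T + Q_\delta\in\mR_n(M;\mZ_2)$, the $\mZ_2$-calculation
\[
\partial T_\delta \;=\; \partial T + \partial Q_\delta \;=\; P_\delta + R_\delta \;\in\; \mR_{n-1}(\partial M;\mZ_2)
\]
shows that $T_\delta\in\mI_n(M;\mZ_2)$ with $\spt(\partial T_\delta)\subset\partial M$, i.e., $T_\delta\in Z_{n,rel}(M,\partial M;\mZ_2)$. Moreover $T_\delta - T = Q_\delta\in\mR_n(\partial M;\mZ_2)$, hence $T_\delta\sim T$ with $\M(T_\delta - T)<\delta$. A sequence $\delta_i\to 0$ then furnishes the desired $\{T_i\}$.

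For the $G$-invariant assertion, I would work inside the closed subspace of $G$-invariant flat chains on the $G$-manifold $\partial M$. The key tool is the existence of $G$-invariant smooth triangulations of $\partial M$, which furnishes a dense family of $G$-invariant polyhedral chains (in the flat norm) within that subspace. Since $\partial T$ is already $G$-invariant (being the boundary of a $G$-invariant current), the polyhedral decomposition above can be carried out $G$-equivariantly, yielding $G$-invariant $P_\delta,R_\delta,Q_\delta$, and hence a $G$-invariant $T_\delta$. I expect the main technical obstacle to be precisely this equivariant refinement: verifying that the flat-norm density of polyhedral chains persists within the $G$-invariant subspace with the same small-mass control. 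This is exactly what distinguishes the equivariant slicing construction of \cite[Lemma 3.6]{wang2023min} from the non-equivariant argument of \cite[Lemma 3.8]{li2021min}.
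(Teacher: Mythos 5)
The paper does not prove this lemma; it cites it from \cite[Lemma 3.8]{li2021min} and \cite[Lemma 3.6]{wang2023min}, and in the line immediately preceding the statement describes the method as ``the slicing trick.'' So your proposal is being compared against a cited slicing argument, not a proof reproduced in the text.

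Your non-equivariant argument is essentially correct and takes a genuinely different route. You view $\partial T$ as a flat $(n-1)$-cycle supported in $\partial M$ and invoke flat-norm density of polyhedral chains to write $\partial T = P_\delta + R_\delta + \partial Q_\delta$ with $\M(R_\delta)+\M(Q_\delta)<\delta$, then set $T_\delta:=T+Q_\delta$. Two caveats worth recording: (1) a polyhedral approximant of $\partial T$ lives a priori in $\R^L$, not in $\partial M$, so to keep $Q_\delta, R_\delta$ supported in $\partial M$ you should either work with Lipschitz chains in $\partial M$ or compose with the Lipschitz nearest-point retraction onto $\partial M$; (2) $\partial T$ need not have finite mass, so the decomposition is a purely flat-norm statement (one chooses a near-optimal filling of $\partial T - P_\delta$, whose finite-mass pieces are then rectifiable by Fleming's theorem). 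With these adjustments the non-equivariant conclusion follows. By contrast, the cited slicing route uses the distance function $d(\cdot,\partial M)$: for a.e.\ $t$ the slice $\langle T,d,t\rangle$ is integer rectifiable, and one corrects $T$ by pushing the thin collar $T\llcorner\{0<d<t\}$ onto $\partial M$ and by replacing the top-dimensional piece $T\llcorner\partial M$ with a finite-perimeter approximation. Your version avoids coarea/slicing machinery at the price of invoking polyhedral density.

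The genuine gap is in the $G$-invariant half. You propose to run the decomposition equivariantly by appealing to ``$G$-invariant smooth triangulations of $\partial M$'' and density of ``$G$-invariant polyhedral chains.'' For a compact Lie group $G$ with positive-dimensional orbits --- the generic situation here, where ${\rm Cohom}(G)\geq 2$ but $\dim G$ is arbitrary --- such objects do not exist: a nonempty polyhedron invariant under a continuous $G$-action must contain entire (curved) orbits and hence is not polyhedral, and Illman's equivariant triangulation theorem produces $G$-CW cells modelled on $G/H\times\sigma$ (for $H\leq G$ and $\sigma$ a simplex), not genuine simplices. Consequently the flat-norm density of $G$-invariant polyhedral chains, with mass control, that your argument requires is simply not available for continuous $G$. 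You flag this as ``the main technical obstacle,'' but the tool you propose to resolve it is exactly the one that is missing. This is where the slicing route wins automatically: $d(\cdot,\partial M)$ is $G$-invariant since $G$ acts by isometries preserving $\partial M$, the nearest-point projection onto $\partial M$ is $G$-equivariant, the slice $\langle T,d,t\rangle$ of a $G$-invariant $T$ is $G$-invariant for a.e.\ $t$, and a $G$-invariant finite-perimeter approximation of the $G$-invariant set underlying $T\llcorner\partial M$ can be obtained from regular level sets of a $G$-invariant smooth function via Sard's theorem. To repair your approach you would need to develop flat-norm density and a deformation theorem for $G$-invariant Lipschitz chains built from $G/H\times\sigma$ cells, which is a substantial detour that the slicing argument avoids.
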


The following proposition indicates the equivalence between these two formulations.
\begin{proposition}\label{Prop: equivalence between two formulation}
	$\Z_n^G( M, \partial M; \mZ_2)$ and $\Z_{n,rel}^G( M, \partial M; \mZ_2)$ are isometric with respect to the mass norm $\M$ and the flat norm $\F$. 
\end{proposition}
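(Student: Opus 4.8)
The plan is to exhibit a bijection $\Z_{n,rel}^G(M,\partial M;\mZ_2)\to\Z_n^G(M,\partial M;\mZ_2)$ that preserves both $\M$ and $\F$, and to check it is a two-sided inverse. The natural candidate is the map sending the $\sim_{rel}$-class $[\partial\Omega]_{rel}$ (with $\Omega\in\C^G(M)$) to the $\sim$-class $[\partial\Omega]$; equivalently, on representatives it is induced by the inclusion $\mI_n^G\subset\mR_n^G$. First I would verify this is well-defined: if $\Omega,\Omega'\in\C^G(M)$ with $[\partial\Omega]_{rel}=[\partial\Omega']_{rel}$, then $\partial\Omega-\partial\Omega'\in\mI_n^G(\partial M;\mZ_2)\subset\mR_n^G(\partial M;\mZ_2)$, so $[\partial\Omega]=[\partial\Omega']$. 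Since the target and source are both indexed by $\{\Omega\in\C^G(M)\}$ modulo the relevant equivalence, and the forgetful passage from $\sim_{rel}$ to $\sim$ only enlarges the equivalence classes, the map is automatically \emph{surjective}; every class in $\Z_n^G$ is $[\partial\Omega]$ for some $G$-invariant Caccioppoli set, hence hit.

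\textbf{Injectivity} is where the content lies, and it is the step I expect to be the main obstacle. Suppose $\Omega,\Omega'\in\C^G(M)$ with $[\partial\Omega]=[\partial\Omega']$, i.e.\ $\partial\Omega-\partial\Omega'\in\mR_n^G(\partial M;\mZ_2)$; I must upgrade this to $\partial\Omega-\partial\Omega'\in\mI_n^G(\partial M;\mZ_2)$, i.e.\ show $\partial(\partial\Omega-\partial\Omega')=0$ so that the rectifiable boundary-supported difference is in fact integral. But $\partial\Omega$ and $\partial\Omega'$ are cycles in $M$ (each has boundary zero as an $\mI_{n-1}$-chain), so $\partial(\partial\Omega-\partial\Omega')=0$ trivially once we know $\partial\Omega,\partial\Omega'\in\mI_n(M;\mZ_2)$ — which holds because $\Omega\in\C(M)$ means $\partial\Omega$ is an integral current. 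Hence $\partial\Omega-\partial\Omega'$ is a rectifiable current supported in $\partial M$ with zero boundary, so it lies in $\mI_n(\partial M;\mZ_2)$, and $G$-invariance is inherited. Thus $[\partial\Omega]_{rel}=[\partial\Omega']_{rel}$ and injectivity follows. (The only subtlety to state carefully is that "rectifiable with rectifiable/zero boundary" $=$ "integral flat chain," which is the definition of $\mI_n$; so injectivity is essentially bookkeeping about which current is already integral.)

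\textbf{Isometry.} The inequality $\M([T])\le\M([T]_{rel})$ and $\F([T])\le\F([T]_{rel})$ is exactly \eqref{Eq: norm in two formulation}, since $[T]_{rel}\subset[T]$. For the reverse inequalities I would invoke Lemma~\ref{Lem: equivalence in two formulation}: given $\tau=[\partial\Omega]\in\Z_n^G$ with canonical representative $T=\partial\Omega\lc(M\setminus\partial M)$, the lemma produces $G$-invariant $T_i\in Z_{n,rel}^G(M,\partial M;\mZ_2)$ with $T\sim T_i$ and $\M(T_i-T)\to 0$; in particular $[T_i]_{rel}\in\Z_{n,rel}^G$ maps to $\tau$, and $\M([T_i]_{rel})\le\M(T_i)\le\M(T)+\M(T_i-T)\to\M(T)=\M(\tau)$, giving $\M([T]_{rel})\le\M(\tau)$ after taking the infimum/limit, hence equality. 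The flat-norm statement is the same argument with $\F$ in place of $\M$, using $\F(T_i-T)\le\M(T_i-T)\to 0$. One should also note the map is the identity on the underlying "physical" current in each class — the canonical representative is the same rectifiable current whether we view it modulo $\sim$ or $\sim_{rel}$ — which makes the isometry statement transparent and confirms compatibility with the $\M$ and $\F$ already defined on both spaces. Assembling well-definedness, bijectivity, and the two norm equalities completes the proof.
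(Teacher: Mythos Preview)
Your approach is essentially the paper's: build the natural map $[T]_{rel}\mapsto [T]$, verify bijectivity, and use Lemma~\ref{Lem: equivalence in two formulation} together with \eqref{Eq: norm in two formulation} for the norm equalities. Your injectivity argument (a rectifiable current with rectifiable---here zero---boundary is integral by definition of $\mI_n$) is exactly what the paper uses when checking that the inverse map $h$ is well defined.

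There is, however, a genuine gap in your flat-norm isometry step. For the mass you correctly use that the canonical representative $T$ satisfies $\M(T)=\M(\tau)$, so approximating $T$ by $T_i\in Z_{n,rel}^G$ via Lemma~\ref{Lem: equivalence in two formulation} gives $\M([T_i]_{rel})\to\M(T)=\M(\tau)$. But the canonical representative has no reason to realize the \emph{flat} norm: in general $\F(T)\ge\F(\tau)$ with strict inequality possible, since adding a rectifiable piece $R\in\mR_n(\partial M;\mZ_2)$ to $T$ can lower $\F$. Your ``same argument'' therefore only yields $\F([T_i]_{rel})\to\F(T)$, not the required $\le\F(\tau)$. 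The fix is exactly what the paper does: instead of the canonical representative, start from a minimizing sequence $\{S_i\}\subset\tau$ with $\F(S_i)\to\F(\tau)$, apply Lemma~\ref{Lem: equivalence in two formulation} to each $S_i$ to obtain $S_i'\in\tau\cap Z_{n,rel}^G$ with $\M(S_i-S_i')\le 1/i$, and conclude $\F(h(\tau))\le\F(S_i')\le\F(S_i)+1/i\to\F(\tau)$.
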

\begin{proof}
	Firstly, for any $\tau\in \Z_{n,rel}^G( M, \partial M; \mZ_2)$ and $T\in \tau$, we define $f(\tau) := [T]$. 
	One easily verifies that $f(\tau)$ is well defined. 
	Since $\tau$ has a $G$-boundary type representative $\bd \Omega$ for some $\Omega\in \C^G(M)$, we see $f(\tau) = [\bd \Omega]\in \Z_n^G( M, \partial M; \mZ_2)$. 
	Thus we have a map $f: \Z_{n,rel}^G( M, \partial M; \mZ_2)\to \Z_n^G( M, \partial M; \mZ_2)$. 
	
	Secondly, for any $\sigma\in \Z_{n}^G( M, \partial M; \mZ_2)$, we can take an element $S\in \sigma\cap Z_{n,rel}( M, \partial M; \mZ_2)$ by Lemma \ref{Lem: equivalence in two formulation}, and define $h(\sigma) := [S]_{rel}$. 
	If $S,S'\in \sigma\cap Z_{n,rel}( M, \partial M; \mZ_2) $, then we have $S-S'\in \mR(\bd M;\mZ_2) \cap Z_{n,rel}( M, \partial M; \mZ_2)$, which implies $S\sim_{rel} S'$ and $h(\sigma)$ is well defined. 
	Note we can also take a $G$-boundary type representative $\bd \Omega'\in Z^G_{n,rel}(M, \partial M; \mZ_2)$ of $\sigma$ for some $\Omega'\in \C^G(M)$, and thus $h(\sigma) = [\bd \Omega']_{rel}$. 
	
	Using the $G$-boundary type representative, we immediately have $f\circ h =id$ and $h\circ f= id$. 
	Let ${\bf v} = \M$ or $\F$. 
	Then by (\ref{Eq: norm in two formulation}), we have ${\bf v}(f(\tau)) \leq {\bf v} (\tau) $ for all $\tau\in \Z_{n,rel}^G( M, \partial M; \mZ_2)$. 
	Meanwhile, for any $\sigma\in \Z_{n}^G( M, \partial M; \mZ_2)$, there is a sequence $\{S_i\}_{i=1}^\infty\subset \sigma$ so that ${\bf v}(S_i) \rightarrow {\bf v}(\sigma)$. 
	By Lemma \ref{Lem: equivalence in two formulation}, we can find $S_i'\in \sigma \cap Z_{n,rel}^G( M, \partial M; \mZ_2)$ for each $i\geq 1$ such that $\M(S_i - S_i') \leq \frac{1}{i}$. 
	Because $h(\sigma)=[S_i']_{rel}$ for every $i\geq 1$, we have ${\bf v}(h(\sigma)) \leq \lim_{i\to\infty} {\bf v}(S_i') = \lim_{i\to\infty} {\bf v}(S_i) = {\bf v}(\sigma)$. 
	Together, we see ${\bf v}(f(\tau)) = {\bf v}(\tau)$ and ${\bf v}(h(\sigma)) = {\bf v}(\sigma)$. 
\end{proof}

By the above proposition, we see the lower semi-continuous of mass \cite[Lemma 3.5]{wang2023min} and the compactness theorem \cite[Lemma 3.7]{wang2023min} are valid for the space $\Z_n^G(M,\partial M;\mZ_2)$ as well as $\Z_{n,rel}^G(M,\partial M;\mZ_2)$. 
Moreover, we also have the following isoperimetric lemma.
\begin{lemma}\label{Lem: isoperimetric lemma}{\rm (\cite[Lemma 3.9, 3.10]{wang2023min})} 
	There exist $\epsilon_M>0$ and $C_M>1$ depending only on the isometric embedding $M\hookrightarrow \R^L$, so that for any $\tau_1, \tau_2\in Z_n^G(M,\partial M;\mZ_2)/\sim$ with $ \F(\tau_1 - \tau_2) <\epsilon_M $,
	there is a unique $Q\in \mI_{n+1}^G(M;\mZ_2)$ 
	satisfying 
	\begin{equation*}
		T_2-T_1 -\partial Q \in \mR_n(\partial M; \mZ_2) \qquad {\rm and } \qquad \M(Q) \leq C_M \F(\tau_1 - \tau_2),
	\end{equation*}
	where $T_i$ is the canonical representative of $\tau_i$, $i=1,2$. 
	Moreover, if $ \M(\tau_1 - \tau_2) <\epsilon_M $, we also have
	\begin{equation*}
		T_2-T_1 = \partial Q + R \qquad {\rm and } \qquad \M(Q)+\M(R) \leq C_M \M(\tau_1 - \tau_2), 
	\end{equation*}
	for some unique $Q\in \mI_{n+1}^G(M;\mZ_2)$ and $R\in \mR^G_n(\bd M;\mZ_2)$.
\end{lemma}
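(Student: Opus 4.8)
The plan is to reduce to the classical (non-equivariant) isoperimetric inequality for relative cycles of Li--Zhou \cite{li2021min}, and then to upgrade the resulting filling to a $G$-invariant one by a uniqueness-plus-averaging argument: once the filling of least mass is shown to be unique, it must be fixed by every $g\in G$. Concretely, Li--Zhou's lemma applied to $(M,\partial M)$ with the fixed embedding $M\hookrightarrow\R^L$ supplies constants $\epsilon_M>0$ and $C_M>1$ so that, whenever $\F(\tau_1-\tau_2)<\epsilon_M$, there is $Q_0\in\mI_{n+1}(M;\mZ_2)$ with $T_2-T_1-\partial Q_0\in\mR_n(\partial M;\mZ_2)$ and $\M(Q_0)\le C_M\F(\tau_1-\tau_2)$, and whenever in addition $\M(\tau_1-\tau_2)<\epsilon_M$ there is moreover a decomposition $T_2-T_1=\partial Q_0+R_0$ with $R_0\in\mR_n(\partial M;\mZ_2)$ and $\M(Q_0)+\M(R_0)\le C_M\M(\tau_1-\tau_2)$. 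I would then shrink $\epsilon_M$ once and for all so that $3C_M\epsilon_M<{\rm Vol}(M,g_{_M})$.

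The heart of the matter is uniqueness of such small fillings. Suppose $Q_0,Q_0'\in\mI_{n+1}(M;\mZ_2)$ both satisfy $T_2-T_1-\partial Q_0\in\mR_n(\partial M;\mZ_2)$, $T_2-T_1-\partial Q_0'\in\mR_n(\partial M;\mZ_2)$ and $\M(Q_0),\M(Q_0')\le C_M\epsilon_M$. Then $Q:=Q_0-Q_0'$ is a top-dimensional integral current on the $(n+1)$-manifold $M$ with $\spt(\partial Q)\subset\partial M$, so $Q\llcorner\interior(M)$ is an $(n+1)$-cycle in the connected open manifold $\interior(M)$; by the constancy theorem, either $Q=0$ or $\M(Q)={\rm Vol}(M,g_{_M})$. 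Since $\M(Q)\le 2C_M\epsilon_M<{\rm Vol}(M,g_{_M})$, the second possibility is excluded, so $Q_0=Q_0'$. Applied to two decompositions $T_2-T_1=\partial Q_0+R_0=\partial Q_0'+R_0'$ (with the stated mass bounds) the same argument gives $\partial(Q_0-Q_0')=R_0'-R_0\in\mR_n(\partial M;\mZ_2)$, hence $Q_0=Q_0'$ and then $R_0=R_0'$.

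To finish I would invoke equivariance. Each $\tau_i$ belongs to $Z_n^G(M,\partial M;\mZ_2)/\!\sim$, hence admits a $G$-invariant representative, so $g_\#\tau_i=\tau_i$ for every $g\in G$; and since the canonical representative $T_i$ is characterized as the unique element of $\tau_i$ with $T_i\llcorner\partial M=0$ --- a property preserved by the isometry $g$, which maps $\partial M$ onto itself --- we get $g_\#T_i=T_i$. Consequently $T_2-T_1-\partial(g_\#Q_0)=g_\#\big(T_2-T_1-\partial Q_0\big)\in\mR_n(\partial M;\mZ_2)$ and $\M(g_\#Q_0)=\M(Q_0)\le C_M\epsilon_M$, so the uniqueness above forces $g_\#Q_0=Q_0$ for all $g\in G$; thus $Q:=Q_0\in\mI_{n+1}^G(M;\mZ_2)$ is the required (and unique) filling. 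For the mass version, pushing the decomposition forward by $g$ and using uniqueness of the pair $(Q_0,R_0)$ yields $g_\#Q_0=Q_0$ and $g_\#R_0=R_0$, so $Q:=Q_0\in\mI_{n+1}^G(M;\mZ_2)$ and $R:=R_0\in\mR_n^G(\partial M;\mZ_2)$.

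Thus the genuinely equivariant content is essentially free; the main obstacle --- apart from the geometric-measure-theoretic work packaged into the cited Li--Zhou lemma --- is the uniqueness in the second paragraph, and this is precisely why the quantitative smallness $3C_M\epsilon_M<{\rm Vol}(M,g_{_M})$, which rules out the fundamental-class ambiguity through the constancy theorem, must be built into the statement of the lemma.
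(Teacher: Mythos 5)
The paper states this lemma as a quotation of \cite[Lemma 3.9, 3.10]{wang2023min} and gives no in-text proof, so there is nothing in the present source to compare against line by line; but your argument is correct, and it is also the natural (and, as far as I can tell, the intended) route. The reduction to the non-equivariant Li--Zhou isoperimetric lemma is sound, the $\mZ_2$-constancy theorem on the connected open manifold $\interior(M)$ (together with $\mH^{n+1}(\partial M)=0$, so that $Q=Q\llcorner\interior(M)$) does force uniqueness of any filling with mass below ${\rm Vol}(M)$, and your derivation of $g_\#T_i=T_i$ from the characterization of the canonical representative as the unique element of $\tau_i$ with $T_i\llcorner\partial M=0$ is exactly right; uniqueness then upgrades $Q_0$ (and in the second part the pair $(Q_0,R_0)$) to $G$-invariance automatically, sidestepping the fact that one cannot average $\mZ_2$-currents over $G$. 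One small consistency check worth noting: the paper's application of this lemma a few lines later, in constructing $F_{M/\partial M}$, needs uniqueness among fillings of mass up to $3C_M\epsilon_0=\epsilon_M$, not merely up to $C_M\F(\tau_1-\tau_2)$; your constancy argument delivers this stronger uniqueness (it only requires the two competing fillings to have combined mass below ${\rm Vol}(M)$), and the a priori shrinkage $3C_M\epsilon_M<{\rm Vol}(M)$ you build in is precisely what that application requires. I would only suggest citing the $\mZ_2$-coefficient version of the constancy theorem explicitly, since Federer's classical statement is for integer coefficients.
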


One can see from the above lemma that $\Z_n^G(M,\bd M;\mZ_2)$ is the connected component of $0$ in $Z_n^G(M,\bd M;\mZ_2)/\sim$ with respect to the $\F$-topology.

\subsection{Area variations}

Now, let us introduce some notations for the variations of area functional. 
Denote by $\mfX(\R^L)$ the space of vector fields in $\R^L$. 
Then we define 
\begin{eqnarray*}
	\mfX(M) &:=& \left\{  X\in \mfX(\R^L) : X(p)\in T_pM \mbox{ for all } p\in M\right\},\\
	\mfX_{tan}(M) &:=& \left\{  X\in \mfX(M) : X(p)\in T_p\bd M \mbox{ for all } p\in \bd M\right\}. 
\end{eqnarray*}
Given $X\in \mfX_{tan}(M)$, the diffeomorphisms $\{f_t\}$ generated by $X$ satisfy $f_t(M) = M$. 
Thus, the first variation for a varifold $V\in \V_n(M)$ along $X\in \mfX_{tan}(M)$ is given by 
\[ \delta V(X) := \frac{d}{dt}\Big|_{t=0} \|(f_t)_\#V\|(M) = \int \Div_S X(x) dV(x,S).  \]
\begin{definition}\label{Def: FBMH}
	Given a relative open subset $U\subset M$, a varifold $V\in \V_n(M)$ is said to be {\em stationary in $U$ with free boundary} if $\delta V(X)=0$ for all $X\in \mfX_{tan}(M)$ compactly supported in $U$. 
\end{definition}

Next, consider a compact smooth embedded hypersurface $\Sigma\subset M$ with (possibly empty) boundary $\bd \Sigma$. 
If $\bd \Sigma\subset \bd M$, then we say $\Sigma$ is {\em almost properly embedded} in $M$ and denote by $(\Sigma,\bd\Sigma)\subset (M,\bd M)$. 
Note $\Sigma$ may touch $\bd M$ tangentially from inside. 
Thus, we denote by 
\[ \Scal(\Sigma) :=  (\Sigma\setminus\bd\Sigma) \cap \bd M \]
the {\em touching set} of $\Sigma$ in $M$. 
In particular, we say $\Sigma$ is {\em properly embedded} if $\Scal(\Sigma)=\emptyset$. 

For an almost properly embedded hypersurface $(\Sigma,\bd\Sigma)\subset (M,\bd M)$, define 
\[ \mfX(M,\Sigma) := \{X\in\mfX(M) : X(q)\in T_q\bd M \mbox{ for $q\in \bd M$ near $\bd \Sigma$} \}. \]
Then the first variation of $\Sigma$ along $X$ is given by 
\[ \delta\Sigma (X) =  \frac{d}{dt}\Big|_{t=0} \Area(f_t(\Sigma)) = -\int_\Sigma \langle H,X\rangle + \int_{\bd\Sigma} \langle\eta, X\rangle,  \]
where $H$ and $\eta$ are the mean curvature vector field and the outward unit co-normal of $\Sigma$. 
\begin{definition}
	Given a relative open subset $U\subset M$, an almost properly embedded hypersurface $(\Sigma,\bd\Sigma)\subset (M,\bd M)$ is said to be {\em stationary} in $U$ if $\delta\Sigma(X)=0$ for all $X\in \mfX(M,\Sigma)$ compactly supported in $U$. 
\end{definition}

Suppose $(\Sigma,\bd\Sigma)\subset (M,\bd M)$ is stationary in $M$. 
It then follows from the first variation formula that $H=0$ and $\Sigma$ meets $\bd M$ orthogonally along $\bd \Sigma$, which is known as a {\em free boundary minimal hypersurface} (abbreviated as FBMH). 
Additionally, let $\mfX^\perp(\Sigma)$ be the space of normal vector fields on $\Sigma$, and
\[\mfX^\perp(\Sigma\setminus\Scal(\Sigma))\]
be the space of normal vector fields on $\Sigma$ compactly supported in $\Sigma\setminus\Scal(\Sigma)$. 
Then for any $X\in \mfX^\perp(\Sigma\setminus\Scal(\Sigma))$, we can extend $X$ to a vector field in $\mfX(M,\Sigma)$ and compute the second variation of $\Sigma$ for the area functional:
\begin{equation}\label{Eq: second variation formula}
	\delta^2\Sigma (X) = Q(X,X) := \int_\Sigma \left( |\nabla^\perp X|^2 -\Ric_M(X,X) - |A_\Sigma|^2|X|^2 \right) - \int_{\bd\Sigma}h_{\bd M}(X,X) ,
\end{equation}
where $\Ric_M$ is the Ricci curvature of $M$, $A_\Sigma$ and $h_{\bd M}$ are the second fundamental forms of $\Sigma$ and $\bd M$ respectively. 
Then the {\em Morse index} ${\rm Index}(\Sigma)$ of $\Sigma$ is defined by the maximal dimension of a linear subspace in $\mfX^\perp(\Sigma\setminus\Scal(\Sigma))$ so that $Q(\cdot,\cdot)$ is negative definite on this subspace. 
Given a relative open subset $U\subset M$, 
if $Q(X,X)\geq 0$ for all $X\in \mfX^\perp(\Sigma)$ (resp. $X\in \mfX^\perp(\Sigma\setminus\Scal(\Sigma))$) compactly supported in $U$, then we say $\Sigma$ is {\em globally stable in $U$} (resp. {\em stable in $U$ away from the touching set $\Scal(\Sigma)$}). 

	


Meanwhile, we say an almost properly embedded hypersurface $(\Sigma,\bd\Sigma)\subset (M,\bd M)$ is $G$-invariant, if $ g\cdot \Sigma = \Sigma$ for all $g\in G$. 
Similarly, define 
\begin{itemize}
	\item $\mfX^G(M):=\{X\in \mfX(M) : dg(X)=X, \forall g\in G\}$;
	\item $\mfX^G(M,\Sigma) := \mfX^G(M)\cap \mfX(M,\Sigma)$;
	\item $\mfX^{\perp,G}(\Sigma) := \{X\in\mfX^\perp(\Sigma) : dg(X)=X, \forall g\in G \}$;
	\item $\mfX^{\perp,G}(\Sigma\setminus\Scal(\Sigma)) := \mfX^{\perp,G}(\Sigma)  \cap \mfX^\perp(\Sigma\setminus\Scal(\Sigma))$.
\end{itemize}
Then we can also define the equivariant Morse index for $G$-invariant FBMHs. 
\begin{definition}\label{Def: G-index}
	For an almost properly embedded $G$-invariant FBMH $(\Sigma,\bd\Sigma)\subset (M,\bd M)$, the {\em equivariant Morse index} (or $G$-index) ${\rm Index}_G(\Sigma)$ of $\Sigma$ is defined as the maximal dimension of a linear subspace in $\mfX^{\perp,G}(\Sigma\setminus\Scal(\Sigma))$ so that $Q(\cdot,\cdot)$ is negative definite on it. 
	
	Given a relative open $G$-subset $U\subset M$, 
	if $Q(X,X)\geq 0$ for all $X\in \mfX^{\perp,G}(\Sigma)$ (resp. $X\in \mfX^{\perp,G}(\Sigma\setminus\Scal(\Sigma))$) compactly supported in $U$, then we say $\Sigma$ is {\em globally $G$-stable in $U$} (resp. {\em $G$-stable in $U$ away from the touching set $\Scal(\Sigma)$}). 
\end{definition}

Note the global stability (resp. global $G$-stability) in the above definition is slightly stronger than $\delta^2\Sigma(X)\geq 0$ for all $X\in \mfX(M,\Sigma)$ (resp. $X\in \mfX^G(M,\Sigma)$) 
since $X\in \mfX^\perp(\Sigma)$ (resp. $X\in \mfX^{\perp,G}(\Sigma)$) may not admit an extension in $\mfX(M,\Sigma)$ (resp. $\mfX^G(M,\Sigma)$) unless $\Scal(\Sigma)=\emptyset$. 
Hence, the definitions of stability and $G$-stability in \cite{wang2023min} should be revised by global stability and global $G$-stability so that the arguments in \cite[Lemma 2.12]{wang2023min} would carry over, and the subsequent regularity theory is not affected. 
Indeed, the stable FBMHs concerned in the replacements constructions (\cite[\S 5]{li2021min}\cite[\S 5]{wang2023min}) are the limits of {\em properly embedded} stable locally area minimizers, which are all globally stable. 

One can also refer to \cite{guang2021compactness} for the curvature estimates and the compactness theorem for almost properly embedded FBMHs that are stable away from the touching sets.

\section{Equivariant min-max theory and equivariant volume spectrum}\label{Sec: min-max setting}

For any $m\in\N$, let $I^m=[0,1]^m$ be the $m$-dimensional cube. 
Given $j\in\N$, define $I(1,j)$ to be the cube complex on $I^1$ with $0$-cells $\{[\frac{i}{3^{j}} ]\}_{i=0}^{3^j}$ and $1$-cells $\{[\frac{i}{3^{j}}, \frac{i+1}{3^{j}} ]\}_{i=0}^{3^j-1} $. 
Then the cube complex on $I^m$ is given by $I(m, j) := I(1,j)\otimes\cdots\otimes I(1,j)$ ($m$-times). 
We say $\alpha = \alpha_1\otimes	\cdots \otimes \alpha_m$ is a $q$-cell in $ I(m, j)$ if every $\alpha_i $ is a cell in $I(1,j)$ and $\sum_{i=1}^m\dim(\alpha_i ) = m$. 
Then for a cubical subcomplex $X$ of $I^m$, the cube complex $X(j)$ is the union of cells of $I(m, j)$ contained in $X$. 
We also use the notations $X(j)_p$ and $\alpha_p$ to denote the set of $p$-cells in $X(j)$ and $\alpha$ respectively. 

Given $x,y\in I(m,j)_0$, the {\em distance} between $x, y$ is defined by ${\bf d}(x,y) := 3^j\cdot\sum_{i=1}^m|x_i-y_i|$. 
We say $x,y$ are {\em adjacent} if ${\bf d}(x,y)=1$.
For any discrete map $\phi : I(m,j)_0 \to \Z_n^G(M, \partial M; \mZ_2)$, define 
the $\M$-fineness of $\phi$ by 
$\mf_\M := \sup \{\M(\phi(x)-\phi(y)) : x,y\in I(m,j)_0\mbox{ are adjacent}\}$. 

\subsection{Almgren's isomorphism for relative $G$-cycles space}

Given any $\F$-continuous closed curve $\Phi : [0,1]\to \Z_n^G(M, \partial M; \mZ_2)$ with $\Phi(0) = \Phi(1)$, the uniform continuity of $\Phi$ implies the existence of a number $K\in\N$ so that 
\begin{equation}\label{Eq: subdivided}
	\F \left( \Phi \Big( \frac{i+1}{3^{k}}  \Big) - \Phi \Big(\frac{i}{3^{k}} \Big) \right) < \epsilon_0 :=\frac{\epsilon_M}{3C_M} , \qquad \forall k\geq K, ~0\leq i \leq 3^k-1,
\end{equation}
where $\epsilon_M$ and $C_M$ are the constants in Lemma \ref{Lem: isoperimetric lemma}. 
Thus, for each $k\geq K$ and $0\leq i \leq 3^k-1$, we have a {\em unique} $A_{i}^k\in\mI_{n+1}^G(M;\mZ_2)$ so that $[\partial A_{i}^k] = \Phi ( \frac{i+1}{3^{k}} ) - \Phi (\frac{i}{3^{k}} )$ and $\M(A_i^k)\leq C_M\epsilon_0$. 
Since $\partial (\sum_{i=0}^{3^k-1} A_i^k) \in \mI_n^G(\partial M;\mZ_2)$ and $\sum_{i=0}^{3^k-1} A_i^k\in \Z_{n+1}(M, \partial M; \mZ_2)$, we can define 
\begin{equation}\label{Eq: almgren isomorphism}
	F_{M/\partial M}(\Phi) = \left[\sum_{i=0}^{3^k-1} A_i^k \right]\in H_{n+1}(M,\partial M; \mZ_2).
\end{equation}
(See \cite[\S 4.4]{federer2014geometric}.)
We claim this element $F_{M/\partial M}(\Phi)$ is well defined. 
Indeed, for every $k\geq N$ and $0\leq i\leq 3^{k}-1$, we have $[\partial (A^{k+1}_{3i} + A^{k+1}_{3i+1} + A^{k+1}_{3i+2})] = [\partial A^k_i ]$ and $\M(A^{k+1}_{3i} + A^{k+1}_{3i+1} + A^{k+1}_{3i+2}) \leq 3C_M\epsilon_0 < \epsilon_M$. 
The uniqueness of the isoperimetric choice (Lemma \ref{Lem: isoperimetric lemma}) implies $A^k_i = \sum_{j=0}^2A^{k+1}_{3i+j}$, and thus $F_{M/\partial M}(\Phi)$ is well-defined. 

In addition, $F_{M/\partial M}$ also induces a well-defined homomorphism 
\[F_{M/\partial M} : \pi_1(\Z_n^G(M, \partial M; \mZ_2); \{0\} ) \to H_{n+1}(M,\partial M;\mZ_2) .\]
Indeed, if $\Phi' : [0,1]\to \Z_n^G(M, \partial M; \mZ_2)$, $\Phi'(0) = \Phi'(1)$, is another closed curve that is homotopic to $\Phi$ in $\Z_n^G(M, \partial M; \mZ_2)$ relative to the boundary value. 
Then one can apply the above discretization and cobordism method to the relative homotopy map $H: I^2\to \Z_n^G(M, \partial M; \mZ_2)$ and verify that $F_{M/\partial M}(\Phi) = F_{M/\partial M}(\Phi') $. 
Moreover, as \cite[Remark 2]{wang2022min}, we also have the following isomorphisms. 

\begin{theorem}\label{Thm: Almgren isomorphism}
	For an $(n+1)$-dimensional Riemannian manifold $(M,g_{_M})$ with a compact Lie group $G$ acting by isometries, we have 
	\[\pi_1(\Z_n^G(M, \partial M; \mZ_2); \{0\} )\cong \pi_1(\Z_n^G(M, \partial M;\M; \mZ_2) ; \{0\} ) \cong H_{n+1}(M,\partial M;\mZ_2) \cong\mZ_2.\]
	Additionally, $\pi_m(\Z_n^G(M, \partial M; \mZ_2);\{0\}) = \pi_m(\Z_n^G(M, \partial M; \M; \mZ_2); \{0\}) = 0 $ for $m\geq 2$.  
\end{theorem}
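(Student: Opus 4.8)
The plan is to mimic the non-equivariant Almgren isomorphism theorem (Almgren, Pitts, and the Marques--Neves reformulation), reducing everything to the fact that the equivariant relative cycle space $\Z_n^G(M,\partial M;\mZ_2)$ has been identified, via the discretization-and-cobordism construction already described in the excerpt, with $H_{n+1}(M,\partial M;\mZ_2)$. First I would check that the map $F_{M/\partial M}$ on $\pi_1$ is both injective and surjective. Surjectivity is the easy direction: since $M$ is connected, $H_{n+1}(M,\partial M;\mZ_2)\cong\mZ_2$ is generated by the fundamental class $[M]$, and the sweepout $t\mapsto [\partial(M\llcorner\{f<t\})]$ for a Morse function $f$ adapted to the $G$-action (e.g. built from a $G$-invariant function, or just $\dist$ to a point then averaged) is a $G$-equivariant closed loop in $\Z_n^G(M,\partial M;\mZ_2)$ whose image under $F_{M/\partial M}$ is $[M]$; hence $F_{M/\partial M}$ is onto. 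For injectivity, suppose $\Phi$ is a closed loop with $F_{M/\partial M}(\Phi)=0$. Using the isoperimetric Lemma \ref{Lem: isoperimetric lemma} one fills $\Phi$ by a chain $\sum_i A_i^k\in\mI_{n+1}^G(M;\mZ_2)$ that is a relative $(n+1)$-boundary; one then runs the standard Almgren interpolation, building a $G$-equivariant null-homotopy of $\Phi$ by slicing the filling chain. The key point is that every step of this construction — the isoperimetric choice, the slicing, the interpolation — can be carried out $G$-invariantly because $G$ acts by isometries and the isoperimetric choice is \emph{unique} (so it is automatically $G$-fixed), exactly as in the well-definedness argument for $F_{M/\partial M}$ above.

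Next I would upgrade from the flat topology to the mass topology, i.e. establish $\pi_1(\Z_n^G(M,\partial M;\mZ_2);\{0\})\cong\pi_1(\Z_n^G(M,\partial M;\M;\mZ_2);\{0\})$. The inclusion $\Z_n^G(M,\partial M;\M;\mZ_2)\hookrightarrow\Z_n^G(M,\partial M;\mZ_2)$ is continuous, inducing a homomorphism on $\pi_1$; one shows it is an isomorphism by an $\M$-fineness refinement argument. Given an $\F$-continuous loop, subdivide finely enough that (\ref{Eq: subdivided}) holds, then use Lemma \ref{Lem: equivalence in two formulation} and the interpolation machinery of \cite{wang2023min} (the equivariant analogue of Marques--Neves/Pitts interpolation) to replace it by an $\M$-continuous loop in the same $\F$-homotopy class, and likewise homotope $\M$-continuous loops that are $\F$-null-homotopic through $\M$-continuous homotopies. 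This is the same bootstrapping used in \cite[\S 3]{marques2014min} and its equivariant counterpart in \cite{wang2022min}, so I would cite those and only indicate the needed $G$-equivariant modifications.

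For the higher homotopy vanishing $\pi_m(\Z_n^G(M,\partial M;\mZ_2);\{0\})=0$ for $m\geq 2$ (in both topologies), the argument is parallel: given a map $f:S^m\to\Z_n^G(M,\partial M;\mZ_2)$, discretize it on a fine enough cubical complex of $I^m$, apply the isoperimetric lemma cell-by-cell to produce a $G$-invariant filling in $\mI_{n+1}^G(M;\mZ_2)$ (here one uses that $H_{m}(\,\cdot\,)$-type obstructions vanish since the target of the cobordism invariant is $H_{n+1}(M,\partial M;\mZ_2)$, which sees only $\pi_1$), and then interpolate to a genuine null-homotopy. Concretely, the cobordism construction assigns to the boundary of each $2$-cell a unique small element of $\mI_{n+1}^G$, these assemble consistently by uniqueness of the isoperimetric choice, and one contracts. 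The main obstacle I anticipate is not any single conceptual point but the bookkeeping of \emph{equivariance throughout the interpolation}: one must verify that the approximating discrete maps, the filling chains, and the continuous homotopies produced by the Almgren--Pitts interpolation machinery can all be taken $G$-invariant simultaneously and with uniform fineness bounds. This is handled exactly as in \cite{wang2023min}\cite{wang2022min} — the decisive facts being that $G$ acts by isometries (so $\M$ and $\F$ are $G$-invariant and the relevant subspaces are closed) and that the isoperimetric filling is unique (so it is forced to be $G$-fixed) — and I would present the proof as a careful transcription of \cite[Remark 2]{wang2022min} with these points made explicit.
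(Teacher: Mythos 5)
Your proposal follows essentially the same route as the paper: surjectivity via a $G$-invariant sweepout from a Morse-type function, well-definedness and injectivity via the unique $G$-invariant isoperimetric filling of Lemma~\ref{Lem: isoperimetric lemma}, vanishing of higher homotopy by discretizing, filling cell-by-cell, and interpolating equivariantly, and the upgrade to the $\M$-topology via the $\M$-continuity of the interpolation from \cite{wang2023min}. The only point worth tightening is the parenthetical about the sweepout function: an averaged distance function is $G$-invariant but generally not Morse in the classical sense (its critical sets are whole orbits), so one should invoke Wasserman's notion of $G$-equivariant Morse function, as the paper does.
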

\begin{proof}
	Take any $G$-equivariant Morse function $f:M\to [0,1]$ (in the sense of \cite{wasserman1969equivariant}). 
	Then we have a $\F$-continuous curve $\Phi: [0,1]\to \Z_n^G(M, \partial M; \mZ_2) $, $\Phi(t) = [\partial \{x\in M: f(x)<t\} ]$ (c.f. \cite[Claim 5.6]{marques2017existence}). 
	Thus, $F_{M/\partial M}(\Phi) = [M]\in  H_{n+1}(M,\partial M;\mZ_2)$, and $F_{M/\partial M}$ is surjective. 
	Additionally, by a discretization/interpolation procedure using \cite[Theorem 4.11, 4.13]{wang2023min}, we also have $F_{M/\partial M}$ is surjective with respect to the $\M$-topology. 

	Meanwhile, we can use the $G$-invariant isoperimetric lemma \ref{Lem: isoperimetric lemma} in place of \cite[Proposition 1.22,1.23]{almgren1962homotopy} to obtain the existence of chain maps in $\mI^G_*(M)$ as \cite[Theorem 2.5]{almgren1962homotopy}. 
	Then, we can replace the cutting/deforming maps in \cite[\S 5]{almgren1962homotopy} by the equivariant ones in the proof of \cite[Theorem 4.13]{wang2023min}, which further gives an $\M$-continuous $G$-equivariant interpolation theorem in $\Z_n^G(M,\bd M;\mZ_2)$ as \cite[Theorem 6.6]{almgren1962homotopy}. 
	Finally, combining the above equivariant ingredients with the homotopy constructions in \cite[\S 7.2, Theorem 7.5]{almgren1962homotopy}, we have $F_{M,\bd M}$ is an isomorphism and $\pi_m(\Z_n^G(M, \partial M; \mZ_2)) = 0 $ for $m\geq 2$. 
	Since the interpolation maps constructed in \cite[Theorem 4.13]{wang2023min} are $\M$-continuous, the isomorphisms are also valid with respect to the $\M$-topology. 
\end{proof}

It should be noted that the constructions in \cite{almgren1962homotopy} are formulated with integral currents. 
Nevertheless, by Proposition \ref{Prop: equivalence between two formulation} (see also \cite[Proposition 3.2]{guang2021min}), it is equivalent to use $\Z_n^G(M,\bd M;\mZ_2)$ which is formulated by integer rectifiable $\mZ_2$-currents. 
In addition, the above arguments also give the following homotopy theorem parallel to \cite[Theorem 8.2]{almgren1962homotopy}. 


\begin{theorem}\label{Thm: Almgren homotopy}
	For any neighborhood $\mathcal{N}$ of $0$ in $\Z_n^G(M,\partial M;\mZ_2)$ with the $\F$-topology, there exist positive numbers $\epsilon_{-1}>\epsilon_0>\epsilon_1>\cdots$ and $1<C_0<C_1<\cdots$, so that $\mB^\F_{\epsilon_{-1}}(0) := \{\tau\in \Z_n^G(M,\partial M;\mZ_2): \F(\tau)\leq \epsilon_{-1}\}\subset \mathcal{N}$, and for any $m\in \N$, if $\Phi : I^m\to \Z_n^G(M,\partial M;\mZ_2)$ is an $\F$-continuous map with $\Phi\llcorner\partial I^m \equiv 0$ and $\sup_{x\in I^m}\F(\Phi(x)) \leq \epsilon_m$, then there is a relative homotopy $H: I^{m+1} \to \Z_n^G(M,\partial M;\mZ_2)$ satisfying
	\begin{itemize}
		\item $H$ is continuous in the $\F$-topology;
		\item $H(0,x) \equiv 0$ and $H(1,x)= \Phi(x)$ for all $x\in I^m$;
		\item $H(s, x)\equiv 0$ for all $x\in\partial I^m$ and $s\in I$;
		\item $\sup \{ \F(H(s,t)) : s\in I, x\in I^m\} \leq C_m\epsilon_m \leq \epsilon_{m-1} $.
	\end{itemize} 
\end{theorem}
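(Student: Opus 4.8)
The plan is to mirror the structure of \cite[Theorem 8.2]{almgren1962homotopy}, replacing every Almgren-type ingredient by the $G$-equivariant counterpart that was already assembled in the proof of Theorem \ref{Thm: Almgren isomorphism}. First I would fix the neighborhood $\mathcal{N}$ of $0$ and choose $\epsilon_{-1}>0$ small enough that the $\F$-ball $\mB^\F_{\epsilon_{-1}}(0)$ lies inside $\mathcal{N}$; here one shrinks $\epsilon_{-1}$ below the isoperimetric constant $\epsilon_M$ of Lemma \ref{Lem: isoperimetric lemma} (divided by a fixed multiple of $C_M$) so that all later cone-type constructions stay controlled. The sequences $\epsilon_0>\epsilon_1>\cdots$ and $1<C_0<C_1<\cdots$ are then built by downward induction on the parameter dimension: $C_m$ will be the amplification factor of the cone homotopy in dimension $m$, and $\epsilon_m$ is chosen so that $C_m\epsilon_m\le \epsilon_{m-1}$, which forces $\epsilon_m$ to shrink fast enough that the homotopy produced at stage $m$ stays in the flat-neighborhood regime needed to run stage $m-1$.

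The heart of the argument is the construction of the relative homotopy $H:I^{m+1}\to \Z_n^G(M,\partial M;\mZ_2)$ given an $\F$-continuous $\Phi:I^m\to\Z_n^G(M,\partial M;\mZ_2)$ with $\Phi\llcorner\partial I^m\equiv 0$ and $\sup_x\F(\Phi(x))\le\epsilon_m$. Following Almgren's \S 7 I would: (1) discretize $\Phi$ into a fine cubical map on $I(m,j)_0$ for $j$ large, using the $G$-equivariant discretization built from the equivariant isoperimetric Lemma \ref{Lem: isoperimetric lemma} (the substitute for \cite[Proposition 1.22, 1.23]{almgren1962homotopy}), keeping the discrete map $G$-invariant at every vertex because the chosen isoperimetric fillings are unique and hence $G$-equivariant; (2) cone off the discrete map: since every value of $\Phi$ is flat-close to $0$, the $G$-invariant isoperimetric choice provides, for each vertex, a canonical $(n+1)$-chain in $\mI_{n+1}^G(M;\mZ_2)$ filling it, and coning linearly in the new parameter direction produces a discrete homotopy from the zero map to the discretization of $\Phi$ whose flat size is bounded by $C_m$ times $\sup_x\F(\Phi(x))$; (3) interpolate the discrete homotopy back to an $\F$-continuous map using the equivariant interpolation theorem from \cite[Theorem 4.11, 4.13]{wang2023min} (which is exactly the equivariant form of \cite[Theorem 6.6]{almgren1962homotopy}); (4) concatenate with the discretization homotopy of $\Phi$ itself (also from \cite[Theorem 4.13]{wang2023min}) so that the endpoint is genuinely $\Phi$, not merely its discretization. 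The boundary conditions $H(s,x)\equiv 0$ for $x\in\partial I^m$ are preserved throughout because $\Phi\llcorner\partial I^m\equiv 0$ and every construction is natural with respect to restriction to subcomplexes; the estimate $\sup\F(H)\le C_m\epsilon_m\le\epsilon_{m-1}$ falls out of the cone bound in step (2) together with the choice of constants.

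The main obstacle, as in the non-equivariant case, is bookkeeping the constants so that the flat bound genuinely improves at each dimension rather than merely staying bounded: one must verify that the cone construction's amplification $C_m$ depends only on $m$ (and on $M\hookrightarrow\R^L$, $G$), not on the particular $\Phi$, and that the discretization fineness $j$ can always be chosen large enough without degrading $C_m$. This is where the uniqueness clause in the equivariant isoperimetric Lemma \ref{Lem: isoperimetric lemma} is essential, since it is what makes the whole chain of fillings simultaneously $G$-equivariant and consistent under subdivision (this is the same mechanism that made $F_{M/\partial M}$ well defined in \eqref{Eq: almgren isomorphism}). Everything else is a direct transcription of \cite[\S 7.2, \S 8]{almgren1962homotopy} with $\mI_*$ replaced by $\mI_*^G$ and the interpolation/cutting maps replaced by the equivariant ones constructed in \cite[Theorem 4.13]{wang2023min}; the $\M$-continuity of those interpolation maps is not needed here, only their $\F$-continuity, so the statement follows without the stronger topology.
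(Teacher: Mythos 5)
Your proposal is correct and follows essentially the same route as the paper: take $\epsilon_{-1}$ small enough relative to the equivariant isoperimetric constant of Lemma~\ref{Lem: isoperimetric lemma}, build the homotopy by combining the equivariant discretization/interpolation theorems of \cite[Theorems 4.11, 4.13]{wang2023min} with Almgren's cone-and-fill homotopy construction \cite[\S 7.2, Theorem 7.5]{almgren1962homotopy} (the uniqueness clause of the isoperimetric lemma supplying the $G$-equivariance and subdivision-consistency, exactly as in the well-definedness of $F_{M/\partial M}$), and then choose $\epsilon_m \leq \epsilon_{m-1}/C_m$ inductively once $C_m = C_m(M,G,m)$ is extracted. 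The only differences are cosmetic: the paper terse\-ly cites the combined construction while you unpack the four-step sequence (discretize, cone, interpolate, concatenate), and you phrase the cutoff as $\epsilon_M$ over a multiple of $C_M$ where the paper simply takes $\epsilon_{-1}<\epsilon_M/2$; neither affects correctness.
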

\begin{proof}
	Let $\epsilon_M$ be given in Lemma \ref{Lem: isoperimetric lemma}. Then we take $0<\epsilon_{-1} < \epsilon_M/2 $ so that $\mB^\F_{\epsilon_{-1}}(0) \subset \mathcal{N}$. 
	For any $m\in\N$, suppose $\Phi : I^m\to \Z_n^G(M,\partial M;\mZ_2)$ is an $\F$-continuous map with $\Phi\llcorner\partial I^m \equiv 0$ and $\sup_{x\in I^m}\F(\Phi(x)) \leq \epsilon_{-1}$. 
	In particular, we have $\F(\Phi(x) - \Phi(y)) \leq \epsilon_M$ for all $x,y\in I^m$. 
	As in the proof of Theorem \ref{Thm: Almgren homotopy}, we can combine the equivariant interpolation constructions in \cite[Theorem 4.13]{wang2023min} with the homotopy constructions in \cite[\S 7.2, Theorem 7.5]{almgren1962homotopy} to get a relative homotopy $H$ between $\Phi$ and the constant map $x\mapsto 0$ in $\Z_n^G(M,\bd M;\mZ_2)$ so that $\sup_{(s,x)\in I^{m+1}}\F(H(s,x)) \leq C_m\sup_{x\in I^{m}}\F(\Phi(x))$, where $C_m>1$ is a positive number depending only on $M,G$ and $m$. 
	Finally, the theorem follows from taking $0< \epsilon_m \leq \epsilon_{m-1}/C_m$ and repeating the above procedure for every $m\in\N$ inductively. 
\end{proof}

As an application, we have the following result parallel to \cite[Proposition 3.3]{marques2017existence}.
\begin{proposition}\label{Prop: homotopy between flat close maps}
	Let $X$ be a cubical subcomplex of some $I(m,j)$. Then there exists $\delta=\delta(M,G,m)>0$, so that if $\Phi_1,\Phi_2: X \to \Z_n^G(M,\partial M;\mZ_2)$ are $\F$-continuous maps with 
	\[\sup \{ \F(\Phi_1(x) - \Phi_2(x)) : x\in X \} < \delta,\]
	then $\Phi_1$ is homotopic to $\Phi_2$ in $\Z_n^G(M,\partial M;\mZ_2)$ with respect to the $\F$-topology. 
\end{proposition}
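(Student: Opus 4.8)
The plan is to reduce the statement to the homotopy theorem (Theorem~\ref{Thm: Almgren homotopy}) applied to the ``difference'' of the two maps, after first subdividing the domain finely enough that the flat distance between $\Phi_1$ and $\Phi_2$ stays inside the isoperimetric range on every cell. The number $\delta$ will be chosen as $\delta(M,G,m) := \epsilon_{m+1}$, where $\epsilon_{m+1}$ is the constant produced by Theorem~\ref{Thm: Almgren homotopy} for a fixed small neighborhood $\mathcal{N}$ of $0$ (say $\mathcal{N} = \mB^\F_{\epsilon_{-1}}(0)$ itself), with one dimension of room to spare because the homotopy will be built over an $(m+1)$-dimensional parameter cube.

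\medskip

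First I would set up the ``gluing'' construction. On each cell of $X$ I want to build a homotopy from $\Phi_1$ to $\Phi_2$ relative to the already-constructed homotopy on the boundary of that cell; this is the standard skeleton-by-skeleton extension. Concretely, consider the map $\Psi$ defined on (a neighborhood of) the diagonal that sends $x$ to the isoperimetric filling $A(x) \in \mI_{n+1}^G(M;\mZ_2)$ with $[\partial A(x)] = \Phi_1(x) - \Phi_2(x)$ and $\M(A(x)) \leq C_M\,\F(\Phi_1(x)-\Phi_2(x))$, which exists and is unique by Lemma~\ref{Lem: isoperimetric lemma} once $\F(\Phi_1(x)-\Phi_2(x)) < \epsilon_M$. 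After subdividing $X$ into some $X(j')$ so that each $\Phi_i$ varies by less than $\epsilon_0 = \epsilon_M/(3C_M)$ in $\F$ across adjacent vertices, one obtains on each cell a well-defined cobordism-type datum, and the uniqueness in Lemma~\ref{Lem: isoperimetric lemma} guarantees these data are compatible across shared faces (this is exactly the compatibility that made $F_{M/\partial M}$ well defined in \eqref{Eq: almgren isomorphism}).

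\medskip

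Next, on each cell $\alpha$ of dimension $q$, having a homotopy $H_\alpha$ already defined on $\partial\alpha \times I$ and wishing to extend it over $\alpha \times I$, I would apply Theorem~\ref{Thm: Almgren homotopy} in the following way: the ``outstanding'' map to be killed is the $q$-dimensional $\F$-continuous map $x \mapsto \Phi_1(x) - \Phi_2(x) + (\text{boundary correction})$, which after the subdivision has $\sup \F \leq \delta = \epsilon_{m+1} \leq \epsilon_q$; Theorem~\ref{Thm: Almgren homotopy} then supplies a relative homotopy in $\Z_n^G(M,\partial M;\mZ_2)$ of one more parameter, continuous in the $\F$-topology and vanishing on $\partial\alpha$, whose flat size is controlled by $C_q \epsilon_q \leq \epsilon_{q-1}$. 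Adding this homotopy (using the flat-topology continuity of addition on $\Z_n^G$ near $0$, and the fact that $[\partial A(x)]$ realizes the difference) converts the already-constructed boundary homotopy into a full homotopy over $\alpha$ from $\Phi_1|_\alpha$ to $\Phi_2|_\alpha$. Iterating over $q = 0,1,\dots,m$ assembles a global $\F$-continuous homotopy $\Phi_1 \simeq \Phi_2$ on all of $X$.

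\medskip

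The main obstacle I anticipate is the bookkeeping of \emph{compatibility across faces}: when the skeleton-by-skeleton extension reaches a cell $\alpha$, the homotopy already produced on $\partial\alpha$ must match up with the isoperimetric data so that the map fed into Theorem~\ref{Thm: Almgren homotopy} is genuinely defined on all of $\partial\alpha$, continuous, and null on $\partial(\partial\alpha)$; ensuring this requires the uniqueness clause of Lemma~\ref{Lem: isoperimetric lemma} at every stage together with a careful choice of the constant $\epsilon_{-1}$ small enough (relative to $\epsilon_M$) that all the intermediate flat norms, inflated by the various $C_q$'s, stay inside the regime where Lemma~\ref{Lem: isoperimetric lemma} and Theorem~\ref{Thm: Almgren homotopy} apply. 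This is precisely the equivariant analogue of the argument for \cite[Proposition~3.3]{marques2017existence}, and the only genuinely new input is that all cycles, fillings, and interpolations live in the $G$-invariant spaces, which is handled by the $G$-equivariant isoperimetric Lemma~\ref{Lem: isoperimetric lemma} and the $G$-equivariant interpolation machinery of \cite[Theorem~4.13]{wang2023min} already invoked in Theorem~\ref{Thm: Almgren homotopy}.
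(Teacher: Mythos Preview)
Your proposal is correct and takes essentially the same approach as the paper: the paper's proof simply notes that after replacing $\Z_n(M;\mZ_2)$ and \cite[Theorem 8.2]{almgren1962homotopy} by $\Z^G_n(M,\partial M;\mZ_2)$ and Theorem~\ref{Thm: Almgren homotopy}, the argument of \cite[Proposition 3.3]{marques2017existence} carries over verbatim, and what you have written is exactly a sketch of that skeleton-by-skeleton extension argument with the equivariant tools inserted.
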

\begin{proof}
	After replacing $\Z_n(M;\mZ_2)$ and \cite[Theorem 8.2]{almgren1962homotopy} by $\Z^G_n(M,\bd M;\mZ_2)$ and Theorem \ref{Thm: Almgren homotopy}, the proof can be taken almost verbatim from \cite[Proposition 3.3]{marques2017existence}. 
\end{proof}

\subsection{Equivariant volume spectrum}

Now we define the $(G,p)$-sweepouts and $(G,p)$-widths similarly to \cite{liokumovich2018weyl}. 
By the isomorphisms in Theorem \ref{Thm: Almgren isomorphism}, we have 
\[ H^1( \Z_n^G(M,\partial M; \mZ_2); \mZ_2) = \mZ_2 = \{0, \bar{\lambda}\} .\]
Hence, for any $\kappa\in H_1( \Z_n^G(M,\partial M; \mZ_2); \mZ_2)$ with an representative $\Phi: S^1 \to \Z_n^G(M,\partial M; \mZ_2)$, we have $\bar{\lambda}(\kappa) = 1$ if and only if $F_{M/\partial M}([\Phi]) = [M]$. 
\begin{definition}[$(G,p)$-sweepout]\label{Def: p-sweepouts}
	Given an integer $p\geq 1$, we say an $\F$-continuous map $\Phi:X\rightarrow \mathcal{Z}_n^G(M,\partial M;\mathbb{Z}_2)$ is a {\em  $(G,p)$-sweepout} of $M$ if 
	\[\Phi^*(\bar \lambda^p) \neq 0 \in H^p(X;\mathbb{Z}_2),\]
	where $\bar{\lambda}^p :=\bar{\lambda}\smile\dots\smile\bar{\lambda}$ is the cup product of $\bar{\lambda}$ with itself for $p$-times. 
\end{definition}

By the isomorphism in Theorem \ref{Thm: Almgren isomorphism}, if an $\F$-continuous map $\Phi'$ is homotopic in $\mathcal{Z}_n^G(M,\partial M;\mathbb{Z}_2)$ to a $(G,p)$-sweepout of $M$, then $\Phi'$ is also a $(G,p)$-sweepout of $M$.

For any $\F$-continuous map $\Phi: X\rightarrow \mathcal{Z}_n^G(M,\partial M;\mathbb{Z}_2)$, define
\[{\bf m}^G(\Phi,r) :=\sup\left\{\|T_x\|( \tB_r(G\cdot p)) : x\in X,~p\in M  \right\},\]
where $T_x$ is the canonical representative of $\Phi(x)$, and $\tB_r(G\cdot p)$ is the open geodesic tube in $\tM\supset M$ of radius $r$ around $G\cdot p$. 
We say $\Phi$ {\it has no concentration of mass on orbits} if 
\[\lim_{r\rightarrow 0}{\bf m}^G(\Phi,r)=0.\]
The following lemma indicates this is a mild technical condition:
\begin{lemma}\label{Lem: no concentration of mass on orbits}{\rm (\cite[Lemma 4.11]{wang2023min})}
	If $\Phi: X\to \Z_{n}^G(M, \partial M;\mZ_2)$ is $\mF$-continuous or $\M$-continuous, then $\sup_{x\in X}\M(\Phi(x)) < \infty$ and $\Phi$ has no concentration of mass on orbits. 
\end{lemma}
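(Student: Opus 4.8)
The plan is to reduce the statement to the corresponding fact for $\mathbf{F}$-continuous maps into the ambient closed manifold $\widetilde M$, where a concentration-of-mass estimate on geodesic balls is already known (see \cite[Lemma 4.11]{wang2023min}, or the boundaryless version in \cite{marques2017existence}), and then to upgrade from ordinary geodesic balls $\tB_r(q)$ to geodesic tubes $\tB_r(G\cdot p)$ around orbits using the compactness of $G$. First I would observe that the mass bound $\sup_{x\in X}\M(\Phi(x))<\infty$ is immediate: if $\Phi$ is $\M$-continuous this is clear by continuity on the compact domain $X$, and if $\Phi$ is only $\mathbf{F}$-continuous one uses that $\mathbf{F}$-continuity together with $\mathbf{F} = \F + \mathbf{F}(|{\cdot}|,|{\cdot}|)$ controls the varifold mass locally, so $x\mapsto \M(\Phi(x)) = \|T_x\|(M)$ is bounded on each member of a finite subcover of $X$. (This mass bound is exactly what lets the $\mathbf{F}$-metric induce the weak topology, as recalled after Definition \ref{Def: FBMH}.)

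For the no-concentration statement, the key step is to fix $\varepsilon>0$ and produce $r_0>0$ with $\mathbf{m}^G(\Phi,r)<\varepsilon$ for all $r<r_0$. Since $G$ is compact, an orbit $G\cdot p$ is compact, and for any $\delta>0$ the tube $\tB_r(G\cdot p)$ is covered by finitely many geodesic balls $\tB_{r+\delta}(q_1),\dots,\tB_{r+\delta}(q_{N})$ with centres $q_i\in G\cdot p$; moreover, by compactness of $M$ together with a uniform bound on the local geometry, the number $N = N(r+\delta)$ of balls needed can be taken uniform over $p\in M$ (it is controlled by the ratio of $\mathcal H^{\dim(G\cdot p)}(G\cdot p)$, which is uniformly bounded, to the volume of a $(r+\delta)$-ball in the orbit). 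Hence $\|T_x\|(\tB_r(G\cdot p))\le \sum_{i=1}^N \|T_x\|(\tB_{r+\delta}(q_i))\le N\cdot \mathbf m(\Phi, r+\delta)$, where $\mathbf m(\Phi,\rho):=\sup\{\|T_x\|(\tB_\rho(q)):x\in X,\,q\in M\}$ is the ordinary (non-equivariant) concentration function.

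The remaining ingredient is that $\Phi$, viewed as a map into $\Z_n(M,\bd M;\mZ_2)$ (or into the cycle space of the closed extension $\widetilde M$), has no ordinary concentration of mass, i.e. $\mathbf m(\Phi,\rho)\to 0$ as $\rho\to 0$; this is precisely the content of \cite[Lemma 4.11]{wang2023min} in the free-boundary setting, whose proof I would invoke verbatim. Given this, choose $\rho_0$ so small that $N(\rho)\cdot\mathbf m(\Phi,\rho)<\varepsilon$ for $\rho<\rho_0$ — note $N(\rho)$ grows only polynomially in $1/\rho$ while $\mathbf m(\Phi,\rho)$ decays, but one must be slightly careful here, so instead I would argue at fixed scale: fix $r_1>0$, let $N_1:=N(2r_1)$ be the corresponding uniform covering number, apply ordinary no-concentration to get $\rho_1\le r_1$ with $\mathbf m(\Phi,\rho_1)<\varepsilon/N_1$, and then for all $r<\rho_1$ conclude $\mathbf m^G(\Phi,r)\le N_1\,\mathbf m(\Phi,\rho_1)<\varepsilon$. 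Letting $\varepsilon\to 0$ gives $\lim_{r\to0}\mathbf m^G(\Phi,r)=0$. The main obstacle is the uniformity of the covering number $N$ over all orbits $G\cdot p$ and the bookkeeping in the last step so that one does not let the covering scale shrink faster than the control on $\mathbf m$ allows; both are handled by fixing the scale first and then invoking ordinary no-concentration, as above.
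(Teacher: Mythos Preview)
The paper does not give a self-contained proof here; it simply imports the result from \cite[Lemma 4.11]{wang2023min}. So there is no detailed argument in the paper to compare your proposal against. That said, your proposal has a genuine gap that you should be aware of.

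The problem is in the covering step. You cover the orbit $G\cdot p$ by $N_1$ centres $q_1,\dots,q_{N_1}$ chosen at the \emph{fixed} scale $r_1$: the $q_i$ are $r_1$-dense in $G\cdot p$, so $N_1$ depends only on $r_1$. But then for $r<\rho_1\le r_1$ the tube $\tB_r(G\cdot p)$ is only contained in $\bigcup_i \tB_{r+r_1}(q_i)\subset \bigcup_i \tB_{2r_1}(q_i)$, and hence
\[
\|T_x\|\big(\tB_r(G\cdot p)\big)\ \le\ N_1\cdot \mathbf m(\Phi, 2r_1),
\]
\emph{not} $N_1\cdot\mathbf m(\Phi,\rho_1)$ as you claim. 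The radius of the covering balls is governed by the density scale $r_1$, not by the tube radius $r$ or by $\rho_1$. Since $2r_1$ is fixed, $\mathbf m(\Phi,2r_1)$ need not be small, and you cannot conclude $\mathbf m^G(\Phi,r)<\varepsilon$. If instead you let the density scale shrink with $\rho_1$, then $N_1\to\infty$ and you are back to needing $N(\rho)\cdot\mathbf m(\Phi,\rho)\to 0$, which you correctly flagged as unproven: there is no quantitative rate available on $\mathbf m(\Phi,\rho)\to 0$.

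A clean way around this is to bypass the covering entirely and argue by compactness and upper semicontinuity. Since ${\rm Cohom}(G)\ge 2$, every orbit has $\dim(G\cdot p)\le n-1$, so $\mH^n(G\cdot p)=0$ and hence $\|T_x\|(G\cdot p)=0$ for every rectifiable $T_x$. If $\mathbf m^G(\Phi,r)\not\to 0$, take $r_k\to 0$, $x_k\in X$, $p_k\in M$ with $\|T_{x_k}\|(\tB_{r_k}(G\cdot p_k))\ge\varepsilon$; pass to subsequential limits $x_k\to x_0$, $p_k\to p_0$. Compactness of $G$ gives $G\cdot p_k\to G\cdot p_0$ in Hausdorff distance, so for any fixed $\delta>0$ eventually $\tB_{r_k}(G\cdot p_k)\subset \overline{\tB_\delta(G\cdot p_0)}$. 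Then $\mathbf F$-continuity of $\Phi$ gives $|T_{x_k}|\to |T_{x_0}|$ as varifolds, and upper semicontinuity of mass on closed sets yields $\varepsilon\le \|T_{x_0}\|(\overline{\tB_\delta(G\cdot p_0)})$ for every $\delta>0$, contradicting $\|T_{x_0}\|(G\cdot p_0)=0$. This is presumably the argument behind the cited lemma, and it avoids the scale-matching problem entirely.
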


\begin{definition}\label{Def: p-width}
	For any integer $p\geq 1$, let $\mathcal{P}_p^G(M)$ be the set of all $(G,p)$-sweepouts with no concentration of mass on orbits. 
	Then the {\em $(G,p)$-width} of $(M, g_{_M})$ is define by 
	\begin{equation}\label{Eq: p-width}
		\omega^G_p(M, g_{_M}) := \inf_{\Phi\in \mathcal{P}_p^G(M)} \sup_{x\in {\rm dmn}(\Phi)} \M(\Phi(x)),
	\end{equation}
	where ${\rm dmn}(\Phi)$ is the domain of $\Phi$, and the mass $\M$ is given under the metric $g_{_M}$. 
\end{definition}

\begin{remark}\label{Rem: equivalence between p-widths definition}
	By Proposition \ref{Prop: equivalence between two formulation}, one can also use $\Z_{n, rel}^G(M,\partial M;\mZ_2) $ in the definitions of $\mathcal{P}_p^G(M)$ and $\omega_p^G(M, g_{_M})$. 
	In particular, if we take $G=\{id\}$, then the $(G,p)$-width coincides with the $p$-width defined in \cite[(7)]{liokumovich2018weyl}. 
\end{remark}

In \cite{wang2022min}, the author has proved that if $\partial M =\emptyset$, then $p^{-\frac{1}{l}}\omega_p^G(M, g_{_M}) = O(1)$, where $l+1 = {\rm Cohom}(G)\geq 2$. 
For the case that $\partial M\neq\emptyset$, this estimate 
is also valid by a similar argument in \cite[\S 8, 9]{wang2022min} using lemmas in \cite[\S 3.1]{wang2023min}.

In the following proposition, we show that it is sufficient to consider only the $\M$-continuous maps in $\mathcal{P}_p^G(M)$ in the computation of $\omega_p^G(M, g_{_M})$. 

\begin{proposition}\label{Prop: p-width using M-continuous map}
	For any $\Phi\in \mathcal{P}_p^G(M)$ and $\epsilon >0$, there exists $\Phi'\in \mathcal{P}_p^G(M)$ continuous in the $\M$-topology such that ${\rm dmn}(\Phi') = {\rm dmn}(\Phi)$ and 	
	$$ \sup_{x\in {\rm dmn}(\Phi)}\M(\Phi'(x)) \leq \sup_{x\in {\rm dmn}(\Phi)} \M(\Phi(x)) + \epsilon . $$
\end{proposition}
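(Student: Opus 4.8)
The plan is to upgrade an $\F$-continuous $(G,p)$-sweepout $\Phi$ to an $\M$-continuous one by the standard discretization–interpolation scheme of Marques–Neves (cf. \cite{marques2017existence}, \cite{liokumovich2018weyl}), carried out equivariantly using the tools already assembled in this section. First I would fix $\Phi \in \mathcal{P}_p^G(M)$ with domain $X$, a cubical subcomplex of some $I(m,j)$. By Lemma \ref{Lem: no concentration of mass on orbits}, $\sup_{x\in X}\M(\Phi(x)) =: L < \infty$ and $\Phi$ has no concentration of mass on orbits. Applying the equivariant discretization procedure from \cite[Theorem 4.11]{wang2023min} (the analog of \cite[Theorem 13.1]{marques2017existence}), for a sufficiently fine subdivision one obtains a fine discrete map $\phi: X(k)_0 \to \Z_n^G(M,\partial M;\mZ_2)$ with $\M$-fineness $\mf_\M(\phi) < \eta$ (for any prescribed small $\eta>0$), with $\sup_i \M(\phi(x_i)) \leq L + \epsilon/2$, and with $\phi$ homotopic (in the $\F$-sense, after interpolation) to $\Phi$; the no-concentration hypothesis is exactly what makes this mass control possible.

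Next I would apply the equivariant interpolation theorem \cite[Theorem 4.13]{wang2023min} (the analog of \cite[Theorem 14.1]{marques2017existence}): since $\mf_\M(\phi)$ is small, $\phi$ can be interpolated to an $\M$-continuous map $\Phi': X \to \Z_n^G(M,\partial M;\mZ_2)$ with
\[
\sup_{x\in X}\M(\Phi'(x)) \leq \sup_i \M(\phi(x_i)) + \epsilon/2 \leq \sup_{x\in X}\M(\Phi(x)) + \epsilon,
\]
and with $\dom(\Phi') = X = \dom(\Phi)$. By construction $\Phi'$ is $\F$-close to $\Phi$ on $X$ — concretely, the discretization/interpolation can be arranged so that $\sup_{x\in X}\F(\Phi'(x)-\Phi(x)) < \delta(M,G,m)$, the constant from Proposition \ref{Prop: homotopy between flat close maps}. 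Hence $\Phi'$ is $\F$-homotopic to $\Phi$ in $\Z_n^G(M,\partial M;\mZ_2)$, so by the remark following Definition \ref{Def: p-sweepouts} (which rests on Theorem \ref{Thm: Almgren isomorphism}), $\Phi'$ is again a $(G,p)$-sweepout. Finally, $\Phi'$ is $\M$-continuous, hence $\mF$-continuous, so Lemma \ref{Lem: no concentration of mass on orbits} applies again to give that $\Phi'$ has no concentration of mass on orbits; therefore $\Phi' \in \mathcal{P}_p^G(M)$, as required.

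The one genuinely nontrivial point — and the step I would expect to be the main obstacle — is ensuring that the discretization step retains the mass bound $\sup\M \le L + \epsilon/2$ while simultaneously achieving small $\M$-fineness; this is where the hypothesis of no concentration of mass on orbits is indispensable, and it is precisely the content imported from \cite[Theorem 4.11]{wang2023min}. Everything else is bookkeeping: checking that the equivariant versions of the Marques–Neves discretization/interpolation (already established in \cite{wang2023min}) produce maps in the $G$-invariant cycle space, that the cohomological nontriviality $\Phi'^*(\bar\lambda^p)\neq 0$ is preserved under $\F$-homotopy (Proposition \ref{Prop: homotopy between flat close maps} plus Theorem \ref{Thm: Almgren isomorphism}), and that $\dom$ is unchanged. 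I would also remark that, by Proposition \ref{Prop: equivalence between two formulation} and Remark \ref{Rem: equivalence between p-widths definition}, the same statement holds verbatim with $\Z_{n,rel}^G(M,\partial M;\mZ_2)$ in place of $\Z_n^G(M,\partial M;\mZ_2)$, which is the formulation used in \cite{liokumovich2018weyl} and will be convenient in Section \ref{Sec: weyl law}.
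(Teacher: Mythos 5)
Your proposal is correct and follows essentially the same route as the paper's proof: equivariant discretization via \cite[Theorem 4.11]{wang2023min}, equivariant interpolation via \cite[Theorem 4.13]{wang2023min}, then using Proposition \ref{Prop: homotopy between flat close maps} to guarantee the resulting $\M$-continuous map is $\F$-homotopic to $\Phi$ and hence remains a $(G,p)$-sweepout, with Lemma \ref{Lem: no concentration of mass on orbits} confirming membership in $\mathcal{P}_p^G(M)$. The only cosmetic difference is that the paper phrases the discretization as producing a sequence $\{\phi_i\}$ with fineness $\delta_i\to 0$ and then chooses $i$ large, whereas you fix a single sufficiently fine $\phi$ directly; the substance is identical.
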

\begin{proof}
	Given $\Phi \in \mathcal{P}_p^G(M)$ with $X={\rm dmn}(\Phi)\subset I^m$, we can apply the discretization theorem \cite[Theorem 4.11]{wang2023min} to $\Phi$, and obtain a sequence of discrete maps $\{\phi_i\}_{i\in\N}$, 
	$\phi_i: X(l_i)_0 \to \Z_{n}^G(M, \partial M;\mZ_2)$
	with $\{l_i\}_{i\in\N}\subset \N$ and $l_i<l_{i+1}$ such that 
	\begin{itemize}
		\item $\mf_{\M}(\phi_i) < \delta_i$;
		\item $\sup \{ \F(\phi_i(x) - \Phi(x)) : x\in X(l_i)_0  \} \leq \delta_i $;
		\item $ \sup\{\M (\phi_i(x)) : x\in X(l_i)_0\} \leq \sup \{ \M (\Phi(x)) : x\in X\}+\delta_i$;
	\end{itemize}
	where $\delta_i \to  0$ is a sequence of positive numbers. 
	Then for sufficiently large $i$, we apply the interpolation theorem \cite[Theorem 4.13]{wang2023min} to $\phi_i$, and obtain an $\M$-continuous map $\Phi_i: X\to \Z_{n}^G(M, \partial M;\mZ_2)$ so that $\Phi_i(x) = \phi_i(x)$ for all $x\in X(l_i)_0$ and 
	\[\sup\{ \M(\Phi_i(x) - \Phi_i(y)) : x,y \in\alpha_0 {\rm ~for~some~}\alpha\in  X(l_i)_m\} \leq C\mf_{\M}(\phi_i ),\]
	where $C = C(M,G,m)>1$. 
	Let $\delta >0$ be the positive number in Proposition \ref{Prop: homotopy between flat close maps}, and $\epsilon>0$ be arbitrary. 
	Then we have $ 2C\delta_i < \min\{\epsilon, \delta \}$ for $i$ large enough. 
	Together, we see
	\begin{itemize}
		\item $\sup_{x\in X}\M(\Phi_i(x)) \leq \sup_{x\in X} \M (\Phi(x)) + \delta_i+C\mf_{\M}(\phi_i ) \leq \sup_{x\in X} \M (\Phi(x)) + \epsilon$;
		\item $\sup_{x\in X} \F(\Phi_i(x) - \Phi(x)) \leq \delta_i + C\mf_{\M}(\phi_i ) < \delta$.
	\end{itemize}
	The proof is finished by Proposition \ref{Prop: homotopy between flat close maps} and taking $\Phi' := \Phi_i$ for $i$ large enough. 
\end{proof}

Recall that a Riemannian metric $g_{_M}'$ on $M$ is said to be {\em $G$-invariant} if $g^*g_{_M}' = g_{_M}' $ for all $g\in G$ (or equivalently $G\subset {\rm Diff}(M)$ acts by isometrics on $(M, g_{_M}')$). 
Then we state the following lemma parallel to \cite[Lemma 2.1]{irie2018density}. 
\begin{lemma}\label{Lem: Gp-width depends continuously on metric}
	For any $p\in\mZ^+$, the $(G,p)$-width $\omega^G_p(M, g_{_M})$ depends continuously on the $G$-invariant metric $g_{_M}$ (in the $C^0$ topology). 
\end{lemma}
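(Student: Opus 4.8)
The plan is to show that for any two $G$-invariant metrics $g_1, g_2$ on $M$ that are $C^0$-close, the corresponding $(G,p)$-widths are close, with a modulus of continuity depending only on the ambient bound on the metrics (and $p$). First I would recall that the mass of a current depends on the metric only through the induced area element: there is a universal function $\Lambda(g_1,g_2)$, tending to $1$ as $g_1 \to g_2$ in $C^0$, such that for every $k$ and every $T \in \mathcal{R}_k(M;\mathbb{Z}_2)$,
\[ \Lambda(g_1,g_2)^{-1}\, \mathbf{M}_{g_1}(T) \leq \mathbf{M}_{g_2}(T) \leq \Lambda(g_1,g_2)\, \mathbf{M}_{g_1}(T). \]
Concretely, if $(1+\epsilon)^{-1} g_1 \leq g_2 \leq (1+\epsilon) g_1$ as bilinear forms, one may take $\Lambda = (1+\epsilon)^{k/2}$ on $k$-dimensional currents. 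Since the equivalence relation $\sim$ and the canonical representative $T$ of a class $\tau$ do not depend on the metric (they are defined purely in terms of the smooth structure and $\partial M$), the same comparison passes to the mass $\mathbf{M}(\tau) = \mathbf{M}(T)$ on $\mathcal{Z}_n^G(M,\partial M;\mathbb{Z}_2)$.

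Next I would observe that the flat topology, the $\mathbf{F}$-metric, the notion of ``no concentration of mass on orbits,'' and hence the class $\mathcal{P}_p^G(M)$ of admissible $(G,p)$-sweepouts are all unchanged under a change of metric: an $\mathbf{F}$-continuous map stays $\mathbf{F}$-continuous, and the cohomological condition $\Phi^*(\bar\lambda^p)\neq 0$ is topological. (The only subtlety is that $\mathbf{m}^G(\Phi,r)$ is measured with geodesic tubes $\tilde B_r(G\cdot p)$, which do depend on the metric; but for $g_1,g_2$ uniformly comparable there are constants $c, C>0$ with $\tilde B_{cr}^{g_1}(G\cdot p) \subset \tilde B_r^{g_2}(G\cdot p) \subset \tilde B_{Cr}^{g_1}(G\cdot p)$, so having no concentration of mass on orbits with respect to one metric is equivalent to having it with respect to the other, using also the mass comparison above.) Therefore $\mathcal{P}_p^G(M)$ is literally the same set of maps for $g_1$ and $g_2$, and only the functional $\sup_x \mathbf{M}(\Phi(x))$ being optimized changes.

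With these two facts in hand, the estimate is immediate: for any $\Phi \in \mathcal{P}_p^G(M)$,
\[ \sup_{x}\mathbf{M}_{g_2}(\Phi(x)) \leq \Lambda(g_1,g_2)\sup_{x}\mathbf{M}_{g_1}(\Phi(x)), \]
so taking the infimum over $\Phi$ gives $\omega_p^G(M,g_2) \leq \Lambda(g_1,g_2)\,\omega_p^G(M,g_1)$, and by symmetry $\omega_p^G(M,g_2) \geq \Lambda(g_1,g_2)^{-1}\,\omega_p^G(M,g_1)$. Since $\Lambda(g_1,g_2) \to 1$ as $g_2 \to g_1$ in $C^0$, and since $\omega_p^G(M,g_1) < \infty$ (finiteness of the $(G,p)$-width is already known, e.g.\ from the growth bound $\omega_p^G \leq C_2 p^{1/(l+1)}$ recalled in the introduction, or directly from Lemma \ref{Lem: no concentration of mass on orbits} applied to any sweepout), we conclude $\omega_p^G(M,g_2) \to \omega_p^G(M,g_1)$, which is the desired continuity.

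I do not anticipate a genuine obstacle here; the argument is a routine adaptation of \cite[Lemma 2.1]{irie2018density}. The one point that deserves care is the claim that $\mathcal{P}_p^G(M)$ is metric-independent, i.e.\ the equivalence of the ``no concentration of mass on orbits'' condition across comparable metrics; this I would handle by the nested-tube inclusions above together with the mass comparison, noting that in fact Lemma \ref{Lem: no concentration of mass on orbits} shows every $\mathbf{F}$-continuous (or $\mathbf{M}$-continuous) map automatically satisfies it, so the set $\mathcal{P}_p^G(M)$ coincides with the set of all $\mathbf{F}$-continuous $(G,p)$-sweepouts regardless of the metric, and the subtlety evaporates.
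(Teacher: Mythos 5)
Your argument is correct and follows the same route as the paper, which simply cites \cite[Lemma 2.1]{irie2018density} and asserts that the proof carries over; your write-up usefully spells out that carryover, in particular the metric-independence of the admissible class $\mathcal{P}_p^G(M)$, which is the only real equivariant wrinkle. One small imprecision in your closing remark: Lemma~\ref{Lem: no concentration of mass on orbits} is stated for $\mF$-continuous or $\M$-continuous maps, not for all flat-continuous maps, so $\mathcal{P}_p^G(M)$ need not coincide with the set of all flat-continuous $(G,p)$-sweepouts. This does not affect your proof, since the earlier nested-tube and mass-comparison argument already shows directly that the no-concentration-of-mass condition is preserved between uniformly comparable metrics, which is all that is needed.
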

\begin{proof}
	The proof of \cite[Lemma 2.1]{irie2018density} would carry over with $G$-invariant metrics $g_{_M}^i$ and $(G,p)$-width $\omega^G_p(M, g_{_M}^i)$ in place of $g_i$ and $\omega_k(M,g_i)$. 
\end{proof}

\subsection{Equivariant min-max hypersurfaces}

We now collect some results in equivariant min-max theory proved by the author (\cite{wang2022min}\cite{wang2023min}\cite{wang2023equivariant}) in Almgren-Pitts' setting. 
In this subsection, the Riemannian manifold $(M^{n+1}, g_{_M})$ and the Lie group $G$ actions are assumed to satisfy either (\ref{Eq: assumption closed}) or (\ref{Eq: assumption with boundary}), 
which are the dimension assumptions used in the closed case \cite{wang2022min}\cite{wang2023equivariant} and free boundary case \cite{wang2023min} respectively. 

Firstly, we introduce some notations for min-max hypersurfaces. 

\begin{definition}[Good $G$-replacement property]\label{Def: good replacements property}
	Let $U\subset M$ be a relative open $G$-set and $V_0\in\V^G_n(M)$ be a $G$-varifold that is stationary in $U$ with free boundary. 
	We say $V_0$ has {\em good $G$-replacement property} in $U$, if for any finite sequence of compact $G$-sets $\{K_i\subset U\}_{i=1}^q$, there are $G$-varifolds $\{V_i\}_{i=1}^q\subset\V_n^G(M)$ stationary in $U$ with free boundary so that 
	\[V_{i-1} \llcorner (M\setminus K_i) = V_i \llcorner (M\setminus K_i),\quad \|V_0\|(M) = \|V_i\|(M), \quad  V_i\llcorner \interior_M (K_i)= |\Sigma_i|,  \]
	where $1\leq i\leq q$ and $(\Sigma_i,\bd\Sigma_i)\subset (\interior_M(K), \interior_M(K)\cap\bd M)$ is an almost properly embedded $G$-invariant (globally) stable FBMH with integer multiplicity. 
\end{definition}

For any $p\in M$, fix $r_0=r_0(G\cdot p)>0$ sufficiently small so that
\begin{itemize}
	\item $\tB^G_{r_0}(p)\subset \mB_{r_0}(G\cdot p)\cap M \subset  \tB^G_{\inj(G\cdot p)}(p)\cap(M\setminus \bd M)$ for $G\cdot p \subset M\setminus \bd M$;
	\item $\mB_{r_0}(G\cdot p)\cap M \subset \tB^G_{\inj(G\cdot p)}(p)\cap \tBcal^G_{r^G_{Fermi}(p)}(p)$ for $G\cdot p\subset \bd M$,
\end{itemize}
where $\tBcal^G_{r^G_{Fermi}(p)}(p)$ is the $G$-invariant Fermi half tube given by \cite[Lemma B.6]{wang2023min}. 
\begin{definition}[Min-max $(c,G)$-hypersurfaces with free boundary]\label{Def: min-max $(c,G)$-hypersurfaces}
	Let $c\in\mZ_+$ and $\Sigma\in \V^G_n(M)$ be stationary in $M$ with free boundary. 
	Then $\Sigma$ is said to be a {\em free boundary min-max $(c,G)$-hypersurface (with multiplicity)} if for any $c$ concentric $G$-annuli 
	\[\mathcal{A}=\{\mAn^L_{s_i,t_i} (G\cdot p)\cap M\}_{i=1}^c \quad {\rm with} \quad 0<2 s_i < 2t_i < s_{i+1} < t_{i+1}<r_0(G\cdot p), \]
	$\Sigma$ has good $G$-replacement property in at least one of the $G$-annulus in $\mathcal{A}$. 
	
	Generally, we say $\Sigma$ is a {\em free boundary min-max $G$-hypersurface (with multiplicity)} if it is a free boundary min-max $(c, G)$-hypersurface for some $c \in  \mZ_+$.
\end{definition}

\begin{remark}
	By the min-max regularity results in \cite{wang2022min}\cite{wang2023min}\cite{wang2023equivariant}, it is reasonable to call the $G$-varifold $\Sigma\in \V^G_n(M)$ in Definition \ref{Def: min-max $(c,G)$-hypersurfaces} a $G$-hypersurface since it is induced by an almost properly embedded $G$-invariant FBMH with multiplicity provided (\ref{Eq: assumption closed}) or (\ref{Eq: assumption with boundary}). 
	Hence, we sometimes abuse the notations and use $\Sigma, \bd\Sigma$ to represent $\spt(\|\Sigma\|), \bd \spt(\|\Sigma\|)$. 
\end{remark}
Note the above definition is generalized from \cite[Definition 5.1]{wang2023equivariant} where the closed manifolds and closed minimal $G$-hypersurfaces are considered. 
In particular, if $\bd M=\emptyset$, all the FBMHs turn out to be closed. 
Using the technique in \cite[Theorem 5.3]{wang2023equivariant}, we have the following compactness result for free boundary min-max $(c,G)$-hypersurfaces. 
\begin{theorem}[Compactness Theorem for free boundary min-max $G$-hypersurfaces]\label{Thm: compactness theorem}
	Let $(M^{n+1},g_{_M})$ be a compact Riemannian manifold with (possibly empty) boundary $\bd M$ and $G$ be a compact Lie group acting by isometries on $M$ so that either (\ref{Eq: assumption closed}) or (\ref{Eq: assumption with boundary}) is satisfied. 
	Suppose $c\in\mZ_+$, $\{\Sigma_i \}_{i\in\N}$ is a sequence of free boundary min-max $(c,G)$-hypersurfaces (with multiplicities) so that $\sup_{i\in\N} \|\Sigma_i\|(M) \leq C <+\infty$. 
	Then $\Sigma_i $ converges (up to a subsequence) in the varifold sense to a free boundary min-max $(c,G)$-hypersurface $\Sigma_\infty$ (with multiplicity). 
	Additionally, there exists a finite union of orbits $\mathcal{Y}=\cup_{k=1}^K G\cdot p_k$ such that the convergence (up to a subsequence) is locally smooth and graphical on $\Sigma_\infty\setminus \mathcal{Y} $ with integer multiplicity. 
	
	Moreover, if $\Sigma_i$ is a free boundary min-max $(c,G)$-hypersurface under the metric $g_{_M}^i$ so that $g_{_M}^i\to g_{_M}$ in the smooth topology, then the above compactness result remains valid. 
\end{theorem}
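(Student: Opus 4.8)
The plan is to adapt the compactness argument of \cite[Theorem 5.3]{wang2023equivariant} from the closed setting to the free boundary setting, using the replacement-based definition of min-max $(c,G)$-hypersurfaces. First I would fix the finiteness of the limiting mass: since $\sup_i \|\Sigma_i\|(M) \le C$, by the compactness of Radon measures and of the space of $G$-invariant varifolds $\V^G_n(M)$ (which is closed under weak limits because $G$ acts by isometries), a subsequence converges in the varifold sense to some $\Sigma_\infty \in \V^G_n(M)$ with $\|\Sigma_\infty\|(M) \le C$. The first substantive point is that $\Sigma_\infty$ is stationary in $M$ with free boundary: this follows because $\delta \Sigma_i(X) = 0$ for all $X \in \mfX_{tan}^G(M)$ and the first variation is continuous under varifold convergence, so $\delta\Sigma_\infty(X) = 0$ as well.

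Next I would establish that $\Sigma_\infty$ inherits the good $G$-replacement property on at least one annulus from each concentric family. Fix a family $\mathcal{A} = \{\mAn^L_{s_i,t_i}(G\cdot p)\cap M\}_{i=1}^c$ of $c$ concentric $G$-annuli as in Definition \ref{Def: min-max $(c,G)$-hypersurfaces}. Each $\Sigma_i$ has the good $G$-replacement property in one of these $c$ annuli; by pigeonhole, after passing to a further subsequence there is a single annulus $\mathcal{A}_j \in \mathcal{A}$ in which \emph{every} $\Sigma_i$ of the subsequence has the good $G$-replacement property. The task is then to pass this property to $\Sigma_\infty$ in $\mathcal{A}_j$. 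Given a finite sequence of compact $G$-sets $\{K_l \subset \mathcal{A}_j\}_{l=1}^q$, one produces replacements $\Sigma_i^{(l)}$ for each $\Sigma_i$, which are $G$-invariant globally stable almost properly embedded FBMHs inside $\interior_M(K_l)$; the curvature estimates and compactness theorem for stable FBMHs away from the touching set \cite{guang2021compactness} — combined with the equivariant regularity from \cite{wang2022min}\cite{wang2023min}\cite{wang2023equivariant} and the dimension assumptions (\ref{Eq: assumption closed}) or (\ref{Eq: assumption with boundary}) — give smooth subsequential convergence of the replacements away from a finite union of orbits. One checks the defining relations ($V_{l-1}\llcorner(M\setminus K_l) = V_l\llcorner(M\setminus K_l)$, equality of masses, and $V_l\llcorner\interior_M(K_l) = |\Sigma_\infty^{(l)}|$ with $\Sigma_\infty^{(l)}$ a stable FBMH) pass to the limit, using that mass is continuous on the relevant sequences of stable pieces and lower semi-continuous in general, together with a standard argument ruling out mass loss in the annular regions. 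This shows $\Sigma_\infty$ has good $G$-replacement in $\mathcal{A}_j$, hence is a free boundary min-max $(c,G)$-hypersurface. The set $\mathcal{Y} = \cup_{k=1}^K G\cdot p_k$ is the (necessarily finite, by a monotonicity/area-bound argument) collection of orbits where smooth graphical convergence of $\Sigma_i \to \Sigma_\infty$ fails; away from $\mathcal{Y}$ the convergence is locally smooth and graphical with integer multiplicity by the same regularity-plus-compactness package applied directly to the $\Sigma_i$ (which are themselves induced by stable-in-annuli FBMHs on a neighborhood of any point outside $\mathcal{Y}$).

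For the last sentence, with varying metrics $g_{_M}^i \to g_{_M}$ smoothly, I would observe that every ingredient above is robust under $C^\infty$-convergence of metrics: the mass bound is with respect to $g_{_M}^i$ but transfers to a uniform $g_{_M}$-bound, the first variation operators converge, the curvature estimates and compactness for stable FBMHs hold uniformly for metrics near $g_{_M}$, and the equivariant regularity theory is stable under metric perturbation. Thus the same diagonal extraction produces a limit $\Sigma_\infty$ stationary with free boundary in $(M, g_{_M})$ having the good $G$-replacement property, i.e. a free boundary min-max $(c,G)$-hypersurface for $g_{_M}$.

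The main obstacle I anticipate is the passage of the good $G$-replacement property to the limit in the free boundary setting: one must handle the touching set $\Scal(\Sigma_i)$ and the interaction of the replacements with $\bd M$, ensuring the limiting replacement is still an \emph{almost properly embedded} globally $G$-stable FBMH and that no mass escapes into $\bd M$ or concentrates on lower-dimensional orbit strata. This is exactly where the revised notion of global $G$-stability (rather than mere second-variation nonnegativity on $\mfX^G(M,\Sigma)$) and the curvature estimates of \cite{guang2021compactness} for FBMHs stable away from the touching set are essential, and where the dimension restriction $3 \le n+1 \le 7$ in (\ref{Eq: assumption with boundary}) is used to exclude singularities in the limit.
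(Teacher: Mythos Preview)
Your proposal is correct and follows essentially the same route as the paper's proof: adapt the subsequence-extraction argument of \cite[Theorem~5.3]{wang2023equivariant} to the free boundary case by replacing the Schoen--Simon compactness with the stable FBMH compactness of \cite{guang2021compactness}, and invoke the equivariant free boundary regularity theory of \cite[\S 5]{wang2023min} (the paper phrases this as establishing free boundary analogues of \cite[Theorem~4.18, Proposition~4.19]{wang2023equivariant}). Your pigeonhole step on the $c$ annuli and your handling of the varying-metrics statement via \cite{guang2021compactness} match the paper's reasoning exactly.
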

\begin{proof}
	If $M$ and $G$ are in the case of (\ref{Eq: assumption closed}), then the above theorem follows directly from \cite[Theorem 5.3, Remark 5.5]{wang2023equivariant}. 
	For the case of (\ref{Eq: assumption with boundary}), one can first use the free boundary min-max regularity theory in \cite[\S 5]{wang2023min} to show the conclusions in \cite[Theorem 4.18, Proposition 4.19]{wang2023equivariant} are also valid in the free boundary case. 
	After replacing \cite[Corollary 1]{schoen1981regularity} by the compactness theorem for stable FBMHs \cite{guang2021compactness}, the theorem then follows from the process of taking subsequence as in \cite[Theorem 5.3]{wang2023equivariant}.  
	Finally, we also have the last statement as in \cite[Remark 5.5]{wang2023equivariant} since the compactness theorem in \cite{guang2021compactness} is also valid for varying metrics. 
\end{proof}

Combining the above compactness result for min-max $G$-hypersurfaces with the free boundary min-max constructions in \cite{wang2023min}, we obtain the following proposition showing the $(G,p)$-width can be realized by the area of some free boundary min-max $G$-hypersurfaces with multiplicities. 
\begin{theorem}\label{Thm: Gp-width realization}
	Suppose $M$ and $G$ satisfy (\ref{Eq: assumption closed}) or (\ref{Eq: assumption with boundary}). 
	Then for any $p\in\mZ^+$, there exist $I\in\N$, $\{m_i\}_{i=1}^I\subset \N$, and a disjoint collection $\{\Sigma_i\}_{i=1}^I$ of almost properly embedded $G$-invariant FBMHs such that 
	\[\omega^G_p(M,g_{_M}) = \sum_{i=1}^I m_i\cdot {\rm Area}_{g_{_M}}(\Sigma_i) , 
	\]
	and $\sum_{i=1}^I m_i|\Sigma_i|$ is a free boundary min-max $((3^p)^{3^p},G)$-hypersurface. 
\end{theorem}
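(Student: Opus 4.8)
The plan is to run the equivariant multi-parameter Almgren--Pitts min-max machinery of \cite{wang2022min}\cite{wang2023min}\cite{wang2023equivariant} for the $p$-parameter variational problem defined by $\omega^G_p(M,g_{_M})$, and then invoke the compactness Theorem \ref{Thm: compactness theorem} together with the equivariant regularity theory. First, by Proposition \ref{Prop: p-width using M-continuous map} I would fix a minimizing sequence $\{\Phi_j\}_{j\in\N}\subset\mathcal{P}^G_p(M)$ of $\M$-continuous maps, with $\limsup_j\sup_{x}\M(\Phi_j(x)) = \omega^G_p(M,g_{_M})$ and ${\rm dmn}(\Phi_j)$ a cubical subcomplex of some $I(m_j,k_j)$. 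Discretizing each $\Phi_j$ via \cite[Theorem 4.11]{wang2023min} and applying the equivariant pull-tight procedure (the free boundary version in \cite{wang2023min}, or \cite{wang2022min}\cite{wang2023equivariant} in the closed case), one obtains a new minimizing sequence whose critical set $\mathbf{C}(\{\Phi_j\})\subset\V^G_n(M)$ consists only of $G$-varifolds stationary in $M$ with free boundary and mass $\omega^G_p(M,g_{_M})$; here Lemma \ref{Lem: no concentration of mass on orbits} supplies the uniform mass bound and absence of mass concentration on orbits needed to run these constructions.

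The heart of the argument is the combinatorial step: using the nontriviality of the $p$-fold cup product $\Phi_j^*(\bar\lambda^p)\neq 0$ and a Lusternik--Schnirelmann argument on the cube complexes, I would show that for \emph{any} collection of $c=(3^p)^{3^p}$ concentric $G$-annuli centered at a fixed orbit $G\cdot q$, some element $V\in\mathbf{C}(\{\Phi_j\})$ is almost minimizing in one of them while remaining stationary with free boundary everywhere. This is exactly the mechanism behind Definition \ref{Def: min-max $(c,G)$-hypersurfaces}: the count $(3^p)^{3^p}$ comes from iterating, at most $3^p$ times and with $3^p$-fold subdivisions per iteration, the dichotomy ``either the almost minimizing property holds in the current annulus, or the restriction of the sweepout to a subcomplex still carries $\bar\lambda^p$ with a strictly smaller width contribution'', following \cite[\S 5]{wang2023equivariant} (whose Definition 5.1 is the closed analogue of Definition \ref{Def: min-max $(c,G)$-hypersurfaces}) and adapted to relative $G$-cycles via the isoperimetric Lemma \ref{Lem: isoperimetric lemma} and Proposition \ref{Prop: equivalence between two formulation}.

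Granting this, for such an almost-minimizing-in-annuli stationary $G$-varifold $V$ the replacement construction of \cite[\S 5]{wang2023min} (and \cite{wang2022min}\cite{wang2023equivariant}) produces, inside each of the relevant small $G$-annuli, a globally $G$-stable replacement realized by an almost properly embedded $G$-invariant FBMH with integer multiplicity; hence $V$ has the good $G$-replacement property in at least one annulus of any $c$ concentric $G$-annuli and is therefore a free boundary min-max $((3^p)^{3^p},G)$-hypersurface in the sense of Definition \ref{Def: min-max $(c,G)$-hypersurfaces}. The equivariant min-max regularity theory of \cite{wang2022min}\cite{wang2023min}\cite{wang2023equivariant} (which uses precisely the dimension hypotheses (\ref{Eq: assumption closed}) or (\ref{Eq: assumption with boundary})) then identifies $V=\sum_{i=1}^I m_i|\Sigma_i|$ with $m_i\in\N$ and the $\Sigma_i$ almost properly embedded $G$-invariant FBMHs. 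Disjointness of the $\{\Sigma_i\}$ follows from the equivariant maximum principle: away from a finite union of exceptional orbits the support of $V$ is smoothly embedded, and where two sheets touch they must coincide, so after collecting distinct connected components with their multiplicities one gets $\omega^G_p(M,g_{_M})=\|V\|(M)=\sum_{i=1}^I m_i\,\Area_{g_{_M}}(\Sigma_i)$.

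I expect the combinatorial Lusternik--Schnirelmann step producing the explicit constant $(3^p)^{3^p}$ to be the main obstacle: one must simultaneously track how the cup-length-$p$ condition survives restriction to subcomplexes and equivariant homotopies, and verify the iteration terminates within the claimed number of steps while keeping all deformations $G$-equivariant and compatible with the free boundary (so that Theorem \ref{Thm: Almgren homotopy} and the equivariant interpolation of \cite[Theorem 4.13]{wang2023min} apply at each stage). The remaining ingredients --- pull-tight, replacements, and the final regularity and maximum-principle package --- are by now standard in the equivariant setting and can be quoted from \cite{wang2022min}\cite{wang2023min}\cite{wang2023equivariant}.
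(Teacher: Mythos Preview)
Your overall strategy is the same as the paper's, but there is a genuine gap in how you set up the combinatorial step. The Pitts--type argument that produces the ``almost minimizing in $c$ concentric annuli'' property yields a constant $c$ that depends on the \emph{dimension of the parameter domain}, roughly $c=(3^m)^{3^m}$ when ${\rm dmn}(\Phi_j)\subset I(m_j,k_j)$. Since the domains of your minimizing sequence $\{\Phi_j\}$ may have $m_j\to\infty$, you cannot extract a uniform $c=(3^p)^{3^p}$ from your setup, and hence you cannot feed the sequence into Theorem \ref{Thm: compactness theorem}, which requires a \emph{fixed} $c$. The paper fixes this by first restricting each $\Phi_i$ to its $p$-skeleton $X_i^{(p)}$: using the exact sequence $H^p(X_i,X_i^{(p)};\mZ_2)\to H^p(X_i;\mZ_2)\to H^p(X_i^{(p)};\mZ_2)$ and $H^p(X_i,X_i^{(p)};\mZ_2)=0$, the restriction $\Psi_i:=\Phi_i\!\upharpoonright\! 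X_i^{(p)}$ is still a $(G,p)$-sweepout, and now the domain has dimension exactly $p$, which is what makes the constant $(3^p)^{3^p}$ uniform in $i$.

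There is also a structural difference worth noting. You try to run pull-tight and form a single critical set $\mathbf{C}(\{\Phi_j\})$ for a sequence with \emph{varying} domains; the usual notion of critical set and the ensuing almost-minimizing dichotomy live inside a fixed homotopy class with a fixed domain. The paper avoids this by applying the full equivariant min-max theorem (\cite[Theorem 4.20]{wang2023min} together with the combinatorics of \cite[\S 4]{wang2023equivariant}) to the homotopy class $\mathbf{\Pi}_i$ of each $\Psi_i$ \emph{separately}, producing a free boundary min-max $((3^p)^{3^p},G)$-hypersurface $V_i$ with $\|V_i\|(M)=\mathbf{L}(\mathbf{\Pi}_i)$; since $\omega^G_p\leq \mathbf{L}(\mathbf{\Pi}_i)\leq \sup_x\M(\Psi_i(x))\to\omega^G_p$, the compactness Theorem \ref{Thm: compactness theorem} (now applicable because $c$ is fixed) gives a subsequential varifold limit realizing $\omega^G_p$. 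Your regularity and maximum-principle endgame is fine and matches the paper, but to make the argument go through you need both the $p$-skeleton restriction and this ``min-max for each $i$, then compactness'' structure.
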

\begin{proof}
	For $M$ and $G$ satisfying (\ref{Eq: assumption closed}), the proposition follows directly from \cite[Corollary 1.7]{wang2023equivariant}. 
	For the case of (\ref{Eq: assumption with boundary}), the arguments are similar. 
	Firstly, by Proposition \ref{Prop: p-width using M-continuous map}, we can take a sequence $\Phi_i: X_i \to \Z_n^G(M,\bd M; \mF; \mZ_2)$ in $\mathcal{P}_p^G(M)$ so that $ \sup_{x\in X_i } \M(\Phi_i(x)) \to \omega_p^G(M,g_{_M})$. 
	Denote by $X_i^{(p)}$ the $p$-skeleton of $X_i$, and by $\iota :X^{(p)}\to X$ the inclusion map. 
	Then, noting $H^p(X_i, X^{(p)}_i;\mZ_2) = 0$, it follows from the exact cohomology sequence
	$ H^{p}(X_i, X_i^{(p)} ; \mZ_{2}) \stackrel{j^{*}}{\rightarrow} H^{p}\left(X_i ; \mZ_{2}\right) \stackrel{\iota^{*}}{\rightarrow} H^{p}(X_i^{(p)} ; \mZ_{2}) $  
	that $\Psi_i:= \Phi_i\circ \iota \in \mathcal{P}_p^G(M)$ is also a $(G,p)$-sweepout with no concentration of mass on orbits (Lemma \ref{Lem: no concentration of mass on orbits}) so that $\sup_{x\in X_i^{(p)} } \M(\Psi_i(x)) \to \omega_p^G(M,g_{_M})$. 
	Let $	\mathbf{\Pi}_i$ be the homotopy class of $\Psi_i$ containing all the maps $\Psi_i': X_i^{(p)} \to \Z_n^G(M,\bd M; \mF; \mZ_2)$	homotopic to $\Psi_i$ in the flat topology. 
	One easily verifies that $\mathbf{\Pi}_i\subset \mathcal{P}_p^G(M)$ and 
	\begin{equation}\label{Eq: realize width}
		 \omega_p^G(M,g_{_M}) \leq \mathbf{L}(\mathbf{\Pi}_i) := \inf_{\Psi_i'\in \mathbf{\Pi}_i}\sup_{x\in X_i^{(p)}} \M(\Psi_i'(x)) \leq \sup_{x\in X_i^{(p)} } \M(\Psi_i(x)) \to  \omega_p^G(M,g_{_M}). 
	\end{equation}
	Next, combining the free boundary min-max constructions in \cite{wang2023min} with the arguments in \cite[\S 4]{wang2023equivariant}, we can make the $G$-varifold $V_i$ associated with $\mathbf{\Pi}_i$ in the free boundary equivariant min-max theorem (\cite[Theorem 4.20]{wang2023min}) to be a min-max $((3^p)^{3^p},G)$-hypersurface with free boundary (Definition \ref{Def: min-max $(c,G)$-hypersurfaces}). 
	Finally, using the compactness theorem \ref{Thm: compactness theorem}, we see $V_i$ converges (up to a subsequence) in the varifold sense to a free boundary min-max $((3^p)^{3^p},G)$-hypersurface with multiplicity whose area realizing $\omega^G_p(M,g_{_M})$ by (\ref{Eq: realize width}). 
\end{proof}

\begin{remark}
	In \cite[Corollary 1.7]{wang2023equivariant}, the author also showed $\sum_{i=1}^I{\rm Index}_G(\Sigma_i)\leq p$ under the assumption (\ref{Eq: assumption closed}). 
	Meanwhile, using the equivariant free boundary bumpy metric theorem (Proposition \ref{Prop: G-bumpy is dense}(i)), one can generalize the arguments in \cite[\S 6]{wang2023equivariant} to the free boundary case (\ref{Eq: assumption with boundary}) and show the same $G$-index upper bounds for the $G$-invariant FBMHs in Theorem \ref{Thm: Gp-width realization}. 
	As this result is not required for our purpose, we leave the details to readers. 
\end{remark}

\begin{remark}\label{Rem: singular MHs}
	If $\bd M =\emptyset$ and ${\rm codim}(G\cdot x)\geq 3$ for all $x\in M$. 
	It follows from the regularity result \cite[Theorem 4.18]{wang2023equivariant} that the $G$-varifold $\Sigma$ in Definition \ref{Def: min-max $(c,G)$-hypersurfaces} is induced by a closed $G$-invariant minimal hypersurface with multiplicity that is smooth away from a ($G$-invariant) singular set of Hausdorff dimension no more that $n-7$. 
	Hence, using \cite{schoen1981regularity}, the above conclusions (Theorem \ref{Thm: compactness theorem}, \ref{Thm: Gp-width realization}) are also valid once we allow the closed $G$-invariant minimal hypersurfaces to possess such small singular sets. 
\end{remark}

\section{Weyl law for equivariant volume spectrum}\label{Sec: weyl law}

In this section, we study the Weyl asymptotic law for the equivariant volume spectrum. 
The arguments are separated into two cases. 
Firstly, we consider $G$-manifolds with a single orbit type, i.e. $M=M^{prin}$, and use the co-area formula to reduce estimates into the smooth orbit space $M/G$. 
Next, for general $G$-manifolds, we can cut-off a small tubular neighborhood of $M\setminus M^{prin}$ to obtain a compact domain $M'$ in $M^{prin}$ with piecewise smooth boundary. 
By the density of $M^{prin}$, we can use the $(G,p)$-width of $M'$ to approximate the $(G,p)$-width of $M$. 

Throughout this section, we always assume ${\rm Cohom}(G)=l+1\geq 2$.

\subsection{Weyl law in $G$-manifolds with a single orbit type}\label{Subsec: weyl law in single orbit type}
In this subsection, we consider the case that there exists only a single orbit type in $M$, i.e. $M=M^{prin}$. 
Therefore, there is a Riemannian metric $g_{_{M/G}}$ on $M/G$ so that $\pi: M\to M/G$ is a Riemannian submersion. 
For simplicity, denote by 
\begin{equation}\label{Eq: orbit volume}
	\vartheta(G\cdot p) = \vartheta([p]) := \mH^{n-l}(G\cdot p)
\end{equation}
the volume functional of orbits. 

To begin with, we show the following lemma, which indicates that $\pi$ maps a $G$-invariant rectifiable set in $M$ to a rectifiable set in $M/G$ with the same codimension. 

\begin{lemma}\label{Lem: reduce rectifiable G-set}
	For any $m$-rectifiable $G$-set $\widetilde{\Sigma}\subset\subset M^{prin}$, we have $\pi(\widetilde{\Sigma})\subset M^{prin}/G$ is an $(l-n+m)$-dimensional rectifiable set with $\mH^m(\widetilde{\Sigma}) = \int_{\pi(\widetilde{\Sigma})} \vartheta([q])~d\mH^{l-n+m}([q])$. 
\end{lemma}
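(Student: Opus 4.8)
The statement is essentially a coarea/submersion computation: on $M^{prin}$ the projection $\pi$ is a Riemannian submersion with fibers the principal orbits $G\cdot q$, all of dimension $n-l$, and $\widetilde\Sigma$ being $G$-invariant means it is saturated for $\pi$. The plan is to reduce to the smooth, saturated submanifold case by rectifiability and then apply the coarea formula for the submersion $\pi|_{\widetilde\Sigma}$. First I would recall the structure of $M^{prin}$: since there is only one orbit type, $M^{prin}/G$ carries a smooth metric $g_{_{M/G}}$ making $\pi$ a Riemannian submersion, and near each point $M^{prin}$ is $G$-equivariantly a product (slice theorem with a single orbit type), so locally $\pi$ looks like the projection of a bundle with fiber $G/P$.

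Next, since $\widetilde\Sigma$ is $m$-rectifiable and $G$-invariant, I would decompose it $\mathcal H^m$-almost everywhere into countably many pieces on which it is (essentially) a $C^1$ $G$-invariant submanifold; the $G$-invariance lets us arrange these pieces to be $G$-invariant themselves (average over $G$, or use that the $G$-action is smooth so the rectifiable structure can be taken $G$-equivariant up to $\mathcal H^m$-null sets). On such a smooth $G$-invariant piece $\widetilde\Sigma_0$, the restriction $\pi|_{\widetilde\Sigma_0}:\widetilde\Sigma_0\to\pi(\widetilde\Sigma_0)$ is a submersion onto an $(m-(n-l))$-dimensional submanifold of $M^{prin}/G$, with fibers $\pi^{-1}([q])\cap\widetilde\Sigma_0 = G\cdot q$ (the whole orbit, by saturation), each of dimension $n-l$. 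Applying the smooth coarea formula to $\pi|_{\widetilde\Sigma_0}$ gives
\[
\mathcal H^m(\widetilde\Sigma_0) \;=\; \int_{\pi(\widetilde\Sigma_0)} \Big(\int_{G\cdot q} J^{-1}\, d\mathcal H^{n-l}\Big)\, d\mathcal H^{m-(n-l)}([q]),
\]
where $J$ is the coarea Jacobian of $\pi|_{\widetilde\Sigma_0}$. The key point is that because $\pi$ is a Riemannian submersion and $\widetilde\Sigma_0$ is $G$-invariant, the tangent space $T_p\widetilde\Sigma_0$ splits orthogonally into the vertical part $T_p(G\cdot q)$ and an $(m-(n-l))$-dimensional horizontal part on which $d\pi$ is an isometry; hence the coarea Jacobian $J$ equals $1$ identically on $\widetilde\Sigma_0$. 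This collapses the inner integral to $\mathcal H^{n-l}(G\cdot q) = \vartheta([q])$, yielding $\mathcal H^m(\widetilde\Sigma_0) = \int_{\pi(\widetilde\Sigma_0)} \vartheta([q])\, d\mathcal H^{m-(n-l)}([q])$. Summing over the countable decomposition (and checking the images' overlaps are null, or just using that $\pi$ restricted to each piece is injective onto its image modulo orbits) gives the claimed identity; along the way this also shows $\pi(\widetilde\Sigma)$ is $(m-(n-l))$-rectifiable, since it is a countable union of $C^1$ images (the $\pi(\widetilde\Sigma_0)$) up to an $\mathcal H^{m-(n-l)}$-null set, the null set coming from the image of the $\mathcal H^m$-null non-rectifiable part of $\widetilde\Sigma$ (whose image has vanishing $\mathcal H^{m-(n-l)}$ measure because $\vartheta$ is bounded below by a positive constant on $\widetilde\Sigma\subset\subset M^{prin}$, so an upper bound on $\mathcal H^{m-(n-l)}$ of the image follows from the coarea inequality).

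I expect the main obstacle to be the measure-theoretic bookkeeping at the rectifiable (as opposed to smooth) level: arranging the $C^1$-rectifiable decomposition of $\widetilde\Sigma$ to be compatible with the $G$-action, and controlling the image of the non-rectifiable $\mathcal H^m$-null part so that $\pi(\widetilde\Sigma)$ comes out genuinely rectifiable with the stated dimension. The geometric heart — that the coarea Jacobian of $\pi|_{\widetilde\Sigma}$ is identically $1$ because $\pi$ is a Riemannian submersion and $\widetilde\Sigma$ is saturated — is clean once the smooth picture is in place; the care is entirely in the reduction to that picture and in verifying that the positive lower bound $\inf_{\widetilde\Sigma}\vartheta>0$ (valid since $\widetilde\Sigma$ is compactly contained in $M^{prin}$ and $\vartheta$ is continuous and positive there) controls both directions of the measure comparison.
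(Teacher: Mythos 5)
The coarea/Riemannian-submersion computation that you carry out on a smooth $G$-invariant piece is fine, and it matches the last step of the paper's proof. But the way you reduce to that smooth picture hides the real content of the lemma, and the reduction as stated does not go through.

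You propose to decompose $\widetilde\Sigma$ into countably many pieces that are simultaneously $C^1$ submanifolds \emph{and} $G$-invariant, justifying this by ``average over $G$'' or by ``taking the rectifiable structure $G$-equivariant.'' Neither of these is a valid move. The standard $C^1$-rectifiable decomposition produces pieces $\Sigma_j$ with no equivariance whatsoever, and replacing $\Sigma_j$ by $G\cdot\Sigma_j$ does not yield a $C^1$ $m$-submanifold: a priori $G\cdot\Sigma_j$ is the image of the $(\dim G + m)$-dimensional manifold $G\times\Sigma_j$ under the action map, and its dimension is $m$ only if at every point $x\in\Sigma_j$ one already knows $T_x(G\cdot x)\subset T_x\Sigma_j$. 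Even granting that, the swept-out set can self-intersect and fail to be a submanifold. So the existence of a $G$-invariant $C^1$ decomposition with the stated properties is not a known fact you can cite; it is essentially equivalent to the statement you are trying to prove, and asserting it begs the question.

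What is actually needed — and what the paper proves directly without any smooth decomposition — is the pointwise tangent-space splitting: at $\mH^m$-a.e.\ $p\in\widetilde\Sigma$ the approximate tangent plane $T_p\widetilde\Sigma$ exists and contains the orbit tangent $T_p(G\cdot p)$, hence splits as $T_p(G\cdot p)\times P_p$ with $P_p$ an $(l-n+m)$-plane in the normal bundle. The paper establishes this via a blow-up argument: writing the group action near $e$ as $g(t)=I+tA(t)$, one compares $\bleta_{p,\lambda}$ applied to $\widetilde\Sigma$ and to $g(\lambda)\cdot\widetilde\Sigma=\widetilde\Sigma$, and concludes that the blow-up limit (the approximate tangent plane) is invariant under translation by $w=A(0)\cdot p$. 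Once this splitting is in hand, the slice-theorem trivialization $\phi: \tB_r^G(p)\cong G\cdot p\times B_r([p])$ identifies $d\pi(P_p)$ as the approximate tangent plane of $\pi(\widetilde\Sigma)$, and then the coarea formula for rectifiable sets (not just for smooth submersions restricted to submanifolds) gives both the measure identity and, together with the lower bound on $\vartheta$, the $(l-n+m)$-rectifiability of $\pi(\widetilde\Sigma)$. Your proposal skips the blow-up step entirely and replaces it with the unjustified equivariant smooth decomposition, which is the genuine gap. The surrounding bookkeeping (using $\inf\vartheta>0$ to control the image of the null set, checking the Jacobian equals $\vartheta$) is consistent with the paper, but it does not compensate for the missing geometric input.
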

\begin{proof}
	By \cite[Theorem 11.6]{simon1983lectures}, for $\mH^m$-a.e. $p\in\widetilde{\Sigma}$, the approximate tangent space $T_p\widetilde{\Sigma}$ exists. 
	Then we claim that 
	\begin{equation}\label{Eq: tangent space split}
		T_p\widetilde{\Sigma} =  T_pG\cdot p \times P_p,
	\end{equation}
	where $P_p\subset {\bf N}_pG\cdot p $ is an $(l-n+m)$-subspace. 
	Indeed, given any $w\in T_pG\cdot p$, there is a curve $g(t)\subset G$ so that $g(0)= e$ and $\frac{d}{dt}\big\vert_{t=0} g(t)\cdot p=w$. 
	Since $G$ acts orthogonally on $\R^L$, we write $g(t)$ as matrix $g(t) = I + tA(t)$ with $A(0)\cdot p = w$. 
	Let $\bleta_{p,r}:\R^L\to\R^L$ be the blowup map given by $\bleta_{p,r}(x)=(x-p)/r$. 
	Then, 
	\begin{eqnarray*}
			(\bleta_{p, \lambda}\circ g(\lambda))(x) = \frac{g(\lambda)\cdot x - g(\lambda)\cdot p}{\lambda}+\frac{g(\lambda)\cdot p -  p}{\lambda} = \big(\btau_{-A(\lambda)\cdot p}\circ g(\lambda)\circ \bleta_{p, \lambda} \big)(x),\nonumber
		\end{eqnarray*}
	where $\btau_{y}:\R^L\to\R^L$ is the translation $\btau_{y}(x) = x-y$. 
	Since $\btau_{-A(\lambda)\cdot p}$ and $g(\lambda)$ are isometries, we have the following result by the area formula and the definition of $T_p\widetilde{\Sigma}$: 
	\begin{eqnarray*}
		\int_{T_p\widetilde{\Sigma}} f ~d\mH^m  &=& \lim_{\lambda\to 0}\int_{\bleta_{p, \lambda}(\widetilde{\Sigma})} f ~d\mH^m =  \lim_{\lambda\to 0} \int_{\bleta_{p, \lambda}(g(\lambda)\cdot \widetilde{\Sigma})} f ~d\mH^m  
		\\&=&  \lim_{\lambda\to 0}\int_{\btau_{-A(\lambda)\cdot p}\circ g(\lambda)\circ \bleta_{p, \lambda} (\widetilde{\Sigma})} f ~d\mH^m
		\\&=&  \lim_{\lambda\to 0}\int_{\bleta_{p, \lambda}(\widetilde{\Sigma})} f\circ \btau_{-A(\lambda)\cdot p}\circ g(\lambda) ~d\mH^m
		\\&=& \int_{T_p\widetilde{\Sigma}} f\circ\btau_{-w} ~d\mH^m = \int_{\btau_{-w}(T_p\widetilde{\Sigma})} f ~d\mH^m, 
	\end{eqnarray*}
	for all $f\in C_c^0(\R^L)$. 
	Hence, $\btau_{-w}(T_p\widetilde{\Sigma}) = T_p\widetilde{\Sigma}$ for all $w\in T_pG\cdot p$, and $T_p\widetilde{\Sigma} = T_pG\cdot p \times P_p$ for some $(l-n+m)$-subspace $P_p\in {\bf N}_pG\cdot p$. 

	Next, take $r\in (0, \inj(G\cdot p))$ so that the normal exponential map $\exp_{G\cdot p}^\perp$ is an diffeomorphism in $\tB^G_r(p)$. 
	Then by the {\em trivial} slice representation of principal orbits (\cite[Corollary 2.2.2]{berndt2016submanifolds}), we have a diffeomorphism $\phi: \tB^G_r(p)\to G\cdot p \times B_r([p])$ given by 
	\begin{equation}\label{Eq: local trivialization}
		\phi(q) = (g\cdot p, ~\pi(q)) \qquad \forall q\in \tB^G_r(p), 
	\end{equation}
	where $g\in G$ satisfies $(\exp_{G\cdot p}^\perp)^{-1}(q)\in  {\bf N}_{g\cdot p} G\cdot p $. 
	It now follows from the $G$-invariance of $\widetilde{\Sigma}$ that $\phi(\widetilde{\Sigma}\cap \tB^G_r(p)) =  G\cdot p \times (\pi(\widetilde{\Sigma})\cap B_r([p]) ) $, and thus
	\[ T_pG\cdot p \times T_{[p]}\pi(\widetilde{\Sigma}) = d\phi(T_p\widetilde{\Sigma} ) =  d\phi( T_pG\cdot p \times P_p) =  T_pG\cdot p \times d\pi (P_p)  .\]
	Therefore, the $(m-n+l)$-plane $d\pi(P_p)$ is the approximated tangent space of $\pi(\widetilde{\Sigma})$. 
		
	Finally, by the co-area formula, we have
	\begin{equation}\label{Eq: co-area for rectifiable set}
		\mH^m(\widetilde{\Sigma}) = \int_{\pi(\widetilde{\Sigma})} \vartheta([q])~d\mH^{l-n+m}([q]),
	\end{equation}
	and $\mH^{l-n+m}(\pi(\widetilde{\Sigma})) \leq C\mH^m(\widetilde{\Sigma})<\infty$. 
	Note also $\vartheta >0$ in $M^{prin}$. 
	Together, the approximated tangent space of $\pi(\widetilde{\Sigma})$ exists for $\mH^{l-n+m}$-a.e. $[p]\in \pi(\widetilde{\Sigma})$, which implies $\pi(\widetilde{\Sigma})$ is $(l-n+m)$-rectifiable. 
\end{proof}

By the above lemma, we can associate to every $\Omega\in \C^G(M)$ a Caccioppoli set $\pi(\Omega)\in \C(M/G)$ with perimeter estimates.

\begin{proposition}\label{Prop: reduce Caccioppoli $G$-sets}
	For any $G$-invariant subset $\Omega$ with $\Clos(\Omega)\subset M^{prin}$, we have $\Omega\in\C^G(M^{prin})$ if and only if $\pi(\Omega) \in \C(M^{prin}/G)$. 
	Additionally, the reduced boundary $\bd\pi(\Omega) = \pi(\bd\Omega)\in \mR_{l}(M^{prin}/G;\mZ_2)$ satisfies 
	\begin{equation}\label{Eq: reduce Caccioppoli sets boundary}
		\| \bd\Omega \|(U) = \int_{U} \vartheta([q])  ~d\|\bd\pi(\Omega)\|([q])  
	\end{equation}
	for any relative open $G$-subset $U\subset M$ provided $\Omega\in\C^G(M^{prin})$ or $\pi(\Omega) \in \C(M^{prin}/G)$. 
\end{proposition}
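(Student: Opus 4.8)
The plan is to establish the equivalence $\Omega\in\C^G(M^{prin}) \iff \pi(\Omega)\in\C(M^{prin}/G)$ together with the perimeter identity by passing to a local trivialization and reducing to a Fubini/co-area computation, then gluing the local statements by a partition of unity argument. First I would recall the structure already set up in Lemma~\ref{Lem: reduce rectifiable G-set}: for each $p\in M^{prin}$ there is $r=r(p)\in(0,\inj(G\cdot p))$ and a diffeomorphism $\phi:\tB^G_r(p)\to G\cdot p\times B_r([p])$ as in \eqref{Eq: local trivialization}, under which $G$-invariant sets correspond exactly to sets of the form $G\cdot p\times A$ with $A\subset B_r([p])$, and the Riemannian metric $g_{_M}$ on $\tB^G_r(p)$ is a (non-product) bundle metric whose vertical fibres have $\mH^{n-l}$-volume $\vartheta([q])$. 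Since $\Clos(\Omega)\subset M^{prin}$ is compact, finitely many such tubes $\{\tB^G_{r_k}(p_k)\}_{k=1}^N$ cover it; choose a subordinate $G$-invariant partition of unity $\{\chi_k\}$ (averaging an ordinary partition of unity over $G$ with the Haar measure $\mu$).

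The core local step is: on a fixed tube $\tB^G_r(p)$, for any $G$-invariant $\Omega_0=G\cdot p\times A$, and any $G$-invariant vector field $X\in\mfX^G(M)$ compactly supported in the tube, I would compute $\int_{\Omega_0}\Div X\,d\mH^{n+1}$ by the co-area formula along $\pi$, writing $d\mH^{n+1}(q)=\vartheta([q])\,d\mH^{l+1}([q])\,d(\text{fibre})$ in the trivialization and using that $\Div X$ integrated over a fibre reduces, by the $G$-invariance of both $X$ and the metric and the first-variation/divergence-theorem on the compact fibre, to $\vartheta([q])\,\Div_{M/G}(\bar X)([q])$ plus a zeroth-order term coming from the variation of $\vartheta$ along the base — equivalently, to $\Div_{M/G}(\vartheta\,\bar X)$, where $\bar X=\pi_*X$ is the induced vector field on $M^{prin}/G$. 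This identifies $\int_{\Omega_0}\Div X\,d\mH^{n+1}=\int_{\pi(\Omega_0)}\Div_{M/G}(\vartheta\,\bar X)\,d\mH^{l+1}$, and since the correspondence $X\leftrightarrow\vartheta\bar X$ between compactly supported $G$-invariant fields on the tube and compactly supported fields on $B_r([p])$ is a linear bijection with $\sup|X|$ comparable (up to the bounded factor $\vartheta$ and its derivatives) to $\sup|\bar X|$, taking suprema over $\|X\|_{C^0}\le 1$ shows $\|\bd\Omega_0\|(U)=\int_U\vartheta([q])\,d\|\bd\pi(\Omega_0)\|([q])$ as a finite quantity on one side exactly when it is finite on the other. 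In particular the Gauss–Green measure $\bd\pi(\Omega_0)$ is representable by integration against the $\mH^l$-rectifiable set $\pi(\bd^*\Omega_0)$ with the stated density, matching Lemma~\ref{Lem: reduce rectifiable G-set} applied to the $n$-rectifiable $G$-set $\bd^*\Omega$.

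To globalize, I would run the local computation with $X$ replaced by $\chi_k X$ for a general $G$-invariant $X$ compactly supported in $M^{prin}$, sum over $k$, and use $\sum_k\chi_k\equiv 1$ near $\Clos(\Omega)$ together with the locality of the perimeter measure (so that the identity \eqref{Eq: reduce Caccioppoli sets boundary} on each $U\cap\tB^G_{r_k}(p_k)$ patches to arbitrary relative open $G$-subsets $U$, since both sides are Radon measures determined by their values on the tubes). The forward implication $\Omega\in\C^G(M^{prin})\Rightarrow\pi(\Omega)\in\C(M^{prin}/G)$ then follows because $\|\bd\Omega\|(M)<\infty$ and $\vartheta$ is bounded below by a positive constant on the compact set $\Clos(\Omega)\subset M^{prin}$, forcing $\|\bd\pi(\Omega)\|(M/G)\le C\|\bd\Omega\|(M)<\infty$; the converse is symmetric using the upper bound on $\vartheta$. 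Finally, $\bd^*\pi(\Omega)=\pi(\bd^*\Omega)$ up to $\mH^l$-null sets is identified through the trivialization \eqref{Eq: local trivialization} exactly as the tangent-space splitting \eqref{Eq: tangent space split} was used in Lemma~\ref{Lem: reduce rectifiable G-set}.

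The main obstacle I anticipate is the bookkeeping in the local co-area step: the tube metric is not a Riemannian product, so $\Div X$ does not simply split as vertical-plus-horizontal divergence, and one must verify that integrating the vertical part over each compact orbit fibre kills it (divergence theorem on a closed manifold, using that $X$ is $G$-invariant and hence descends to a well-defined vertical field on each fibre), while the horizontal part, after the fibre integration, recombines with the derivative of the fibre-volume $\vartheta$ into precisely $\Div_{M/G}(\vartheta\,\bar X)$ — this is the place where the weight $\vartheta$ in \eqref{Eq: reduce Caccioppoli sets boundary} is forced, and getting the Jacobian factors consistent with the co-area formula \eqref{Eq: co-area for rectifiable set} is the delicate point. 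Everything else — compactness of $\Clos(\Omega)$, positivity and smoothness of $\vartheta$ on $M^{prin}$, the partition-of-unity gluing, and the rectifiability of $\pi(\bd^*\Omega)$ — is routine given Lemma~\ref{Lem: reduce rectifiable G-set}.
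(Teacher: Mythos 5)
The forward implication ($\Omega\in\C^G\Rightarrow\pi(\Omega)\in\C$) is fine in outline, but the converse is where the argument breaks, and your dismissal ``the converse is symmetric'' glosses over exactly the point the paper works hardest on.

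To show $\Omega\in\C^G(M^{prin})$ you must bound $\int_\Omega\Div\tilde X\,d\mH^{n+1}$ for \emph{all} compactly supported $\tilde X\in\mfX(M^{prin})$ with $\|\tilde X\|_{C^0}\le 1$; your sketch tests only $G$-invariant $X\in\mfX^G(M)$, which a priori yields a weaker quantity than the perimeter. The paper first reduces to the $G$-invariant case by averaging: writing $\tilde X_G:=\int_G(dg^{-1})\tilde X\,d\mu(g)$ and using the $G$-invariance of $\Omega$ to get $\int_\Omega\Div\tilde X=\int_\Omega\Div\tilde X_G$. This step is absent from your proposal, and without it you have not shown $1_\Omega\in BV(M^{prin})$.

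Second, even after restricting to $G$-invariant fields, your claimed ``linear bijection $X\leftrightarrow\vartheta\bar X$'' is false: a $G$-invariant field tangent to the orbits has $\bar X=\pi_*X=0$, so the tangent-to-orbit component is invisible on the quotient yet still contributes to $\int_\Omega\Div X$. That contribution cannot be controlled by $\|\bd\pi(\Omega)\|$; instead one splits $\tilde X_G=\tilde X_G^\perp+\tilde X_G^\top$, identifies $\int_\Omega\Div\tilde X_G^\perp$ with a flux through $\bd\pi(\Omega)$ against $\vartheta\,d\pi(\tilde X_G^\perp)$, observes that $\int_\Omega\Div_{T(G\cdot p)}\tilde X_G^\top$ vanishes by the divergence theorem on each compact orbit, and then bounds the remaining term $\int_\Omega\Div_{{\bf N}(G\cdot p)}\tilde X_G^\top$ by $C\,|\Omega|$ using that $|\Div_{{\bf N}(G\cdot p)}\tilde X_G^\top|\le C|\tilde X_G^\top|$ (a second-fundamental-form estimate on orbits). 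This bound is not ``symmetric'' to the forward direction and must be argued separately; your write-up neither states it nor provides a substitute.

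Incidentally, the local-trivialization-plus-partition-of-unity scaffolding is unnecessary once these two points are addressed: the paper's argument is global, testing with the lifts $(d\pi)^*X$ in one direction and the averaged $\tilde X_G$ in the other. If you do insist on localizing, the gaps above reappear on each tube and still need the averaging and the tangent-part estimate.
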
 
\begin{proof}
	Firstly, suppose $\Omega\in\C^G(M^{prin})$. 
	Then for any $X\in \mathfrak{X}(M^{prin}/G)$, we can define $\tilde{X}:= (d\pi)^*X \in \mathfrak{X}(M^{prin})$ so that $\tilde{X}(p)\in {\bf N}_pG\cdot p$ for all $p\in M$, where $(d\pi)^*$ is the adjoint of $d\pi$. 
	Note it follows from \cite[Theorem 14.3]{simon1983lectures} and the $G$-invariance of $\Omega$ (in particular (\ref{Eq: tangent space split})) that the measure-theoretic outer unit normal $\nu_{\Omega}$ of $\Omega$ is $G$-invariant and normal to every orbit. 
	Hence, using the co-area formula and the isometry $d\pi:{\bf N}_qG\cdot q \to T_{[q]}M/G$, 
	\begin{eqnarray*}
		\int_\Omega \Div \tilde{X} d\mH^{n+1} &= &  \int_{\bd\Omega} \tilde{X}\cdot\nu_{\bd\Omega} ~d\mH^{n} = \int_{\pi(\bd\Omega)} (\tilde{X}\cdot\nu_{\Omega})\big(\pi^{-1}([q])\big)\cdot \vartheta([q]) ~d\mH^{l}([q]) 
		\\&=& \int_{\pi(\bd\Omega)}  \big(\vartheta X \big) \cdot d\pi(\nu_{\Omega}) ~d\mH^{l} .
	\end{eqnarray*}
	On the other hand, by the co-area formula and the isometry $d\pi:{\bf N}_qG\cdot q \to T_{[q]}M/G$, 
	\begin{eqnarray*}
		\int_\Omega \Div \tilde{X} d\mH^{n+1} &= &  \int_{\Omega} \Div_{T_qG\cdot q}  \tilde{X} + \Div_{{\bf N}_qG\cdot q}  \tilde{X} ~d\mH^{n+1} (q)
		\\&=& \int_{\pi(\Omega)} \Big(\int_{G\cdot q}\Div_{T_qG\cdot q} \tilde{X} \Big) +  \big( \Div_{{\bf N}_qG\cdot q} \tilde{X}\big)\cdot \vartheta([q]) ~d\mH^{l+1} ([q])
		\\&=& \int_{\pi(\Omega)} \delta (G\cdot p)(\tilde{X}) + \big( \Div_{T_{[q]}M/G} X\big)\cdot \vartheta([q]) ~d\mH^{l+1} ([q])
		\\&=& \int_{\pi(\Omega)} \delta (G\cdot p)(\tilde{X}) + \Div_{T_{[q]}M/G} \big(\vartheta X\big) - X\big(\vartheta\big) ~d\mH^{l+1} 
		\\&=& \int_{\pi(\Omega)}  \Div \big(\vartheta X\big) ~d\mH^{l+1} , 
	\end{eqnarray*}
	where $\delta(G\cdot p)(\tilde{X})=X(\vartheta)$ is the first variation for the volume functional $\vartheta$ of $G\cdot p$. 
	Combining the above computations, we conclude $\pi(\Omega)\in\C(M/G)$, $\bd \pi(\Omega) = \pi(\bd\Omega)\in \mR_{l}(M^{prin}/G;\mZ_2)$ and $\nu_{\pi(\Omega)} = d\pi(\nu_{\Omega})$. 
	
	Next, suppose $\pi(\Omega)\in\C(M^{prin}/G)$. 
	Then for any $\tilde{X}\in\mathfrak{X}(M^{prin})$, consider the averaged $G$-invariant vector field $\tilde{X}_G:=\int_G (dg^{-1})\tilde{X} d\mu(g) \in \mathfrak{X}^G(M^{prin})$. 
	Hence, 
	\[ \int_\Omega \Div \tilde{X}_G = \int_G\Big(\int_\Omega \Div ((dg^{-1})\tilde{X} ) \Big) d\mu(g) = \int_G\Big(\int_{g\cdot \Omega} \Div \tilde{X}  \Big) d\mu(g) = \int_{\Omega} \Div \tilde{X} . \]
	In addition, denote by $\tilde{X}_G^\bot , \tilde{X}_G^\top$ the normal and the tangent parts of $\tilde{X}_G$ to every orbit respectively. 
	According to the previous computations, 
	\[ \int_\Omega \Div \tilde{X}_G^\bot = \int_{\pi(\Omega)}  \Div \big(\vartheta \cdot d\pi(\tilde{X}_G^\bot) \big) ~d\mH^{l+1} = \int_{\bd \pi(\Omega)}  \big(\vartheta d\pi(\tilde{X}_G^\bot) \big) \cdot \nu_{\pi(\Omega)} ~d\mH^{l} \]
	Meanwhile, by the co-area formula and divergence theorem, we see
	\[\int_{\Omega}\Div \tilde{X}_G^\top = \int_{\Omega}\Div_{T(G\cdot p)} \tilde{X}_G^\top + \int_{\Omega}\Div_{{\bf N}(G\cdot p)} \tilde{X}_G^\top = \int_{\Omega}\Div_{{\bf N}(G\cdot p)} \tilde{X}_G^\top .\] 
	Note $|\Div_{{\bf N}(G\cdot p)} \tilde{X}_G^\top| \leq C |\tilde{X}_G^\top|$ for some constant $C>0$ depending only on $M,G$. 
	Therefore, we have $1_{\Omega}\in BV(M)$ and $\Omega\in \C^G(M)$.

	Finally, (\ref{Eq: reduce Caccioppoli sets boundary}) follows from Lemma \ref{Lem: reduce rectifiable G-set} and the co-area formula (see (\ref{Eq: co-area for rectifiable set})). 
\end{proof}

Using the above proposition, we obtain the following bi-Lipschitz homeomorphism between $\Z_{n}^G(M,\partial M; \M; \mZ_2) $ and $ \Z_{l}(M/G, \partial M/G; \M; \mZ_2)$ provided $M=M^{prin}$. 
\begin{theorem}\label{Thm: homeomorphism}
	Suppose $M=M^{prin}$. 
	Then there is a bi-Lipschitz homeomorphism 
	\[\Xi : \Z_{n}^G(M,\partial M; \M; \mZ_2) \to \Z_{l}(M/G, \partial M/G; \M; \mZ_2), \]
	so that ${\rm Lip}(\Xi)\leq 1/\inf_{p\in M}\vartheta(G\cdot p)$ and ${\rm Lip}(\Xi^{-1})\leq \sup_{p\in M}\vartheta(G\cdot p)$. 
	Additionally, for any $\tilde{\tau}\in \Z_{n}^G(M,\partial M; \mZ_2)$ and relative open $G$-subset $U\subset M$, 
	\begin{equation}\label{Eq: reduced mass}
		\|\tilde{\tau}\|(U) = \|\Xi(\tilde{\tau})\|\llcorner \vartheta~(\pi(U)) := \int_{\pi(U)} \vartheta([q])~d\|\Xi(\tilde{\tau})\|([q]),
	\end{equation}
	where $\vartheta$ is the volume function of orbits defined in (\ref{Eq: orbit volume}). 
\end{theorem}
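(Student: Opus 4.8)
The plan is to build $\Xi$ directly from the bijection $\Omega\mapsto\pi(\Omega)$ between $\C^G(M)=\C^G(M^{prin})$ and $\C(M/G)=\C(M^{prin}/G)$ furnished by Proposition \ref{Prop: reduce Caccioppoli $G$-sets}, and then to extract well-definedness, bijectivity, the mass identity (\ref{Eq: reduced mass}), and the Lipschitz bounds all from the perimeter formula (\ref{Eq: reduce Caccioppoli sets boundary}). Concretely, for $\tilde{\tau}=[\bd\Omega]$ with $\Omega\in\C^G(M)$ I would set $\Xi(\tilde{\tau}):=[\bd\pi(\Omega)]$; since $M=M^{prin}$, the orbit space $M/G$ is a smooth compact manifold with boundary $\bd M/G=\pi(\bd M)$ and $\pi^{-1}(\bd M/G)=\bd M$, so $\Z_l(M/G,\bd M/G;\mZ_2)$ is a legitimate target and the relative boundary-type cycle formalism of \cite{li2021min} (canonical representatives, mass of a class, the $\M$-topology) applies to it verbatim.

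First I would record that $\Xi$ is a well-defined $\mZ_2$-homomorphism. For $\Omega_1,\Omega_2\in\C^G(M)$ the symmetric difference $\Omega_1\triangle\Omega_2$ again lies in $\C^G(M)$, and $G$-invariance gives $\pi(\Omega_1\triangle\Omega_2)=\pi(\Omega_1)\triangle\pi(\Omega_2)$, hence $\bd\pi(\Omega_1)+\bd\pi(\Omega_2)=\bd\pi(\Omega_1\triangle\Omega_2)$; so once well-definedness is known, $\Xi$ is additive. For well-definedness, if $[\bd\Omega_1]=[\bd\Omega_2]$ then $\bd(\Omega_1\triangle\Omega_2)\in\mR_n(\bd M;\mZ_2)$, i.e.\ $\|\bd(\Omega_1\triangle\Omega_2)\|(M\setminus\bd M)=0$; feeding $U=M\setminus\bd M$ into (\ref{Eq: reduce Caccioppoli sets boundary}) and using $\vartheta>0$ on $M^{prin}$ forces $\|\bd\pi(\Omega_1\triangle\Omega_2)\|$ to be supported in $\bd M/G$, i.e.\ $[\bd\pi(\Omega_1)]=[\bd\pi(\Omega_2)]$. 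Running the same argument with $\pi$ replaced by $\pi^{-1}$ (legitimate by the converse half of Proposition \ref{Prop: reduce Caccioppoli $G$-sets}) gives injectivity and surjectivity, so $\Xi$ is a bijection with inverse $[\bd\Lambda]\mapsto[\bd\pi^{-1}(\Lambda)]$.

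Next I would prove the mass identity. The canonical representative of $\tilde{\tau}=[\bd\Omega]$ has weight measure $\|\bd\Omega\|\lc(M\setminus\bd M)$, and similarly for $\Xi(\tilde\tau)=[\bd\pi(\Omega)]$ on $M/G$; restricting (\ref{Eq: reduce Caccioppoli sets boundary}) to a relative open $G$-set $U\subset M$ (equivalently to $\pi(U)\subset M/G$, by $G$-invariance) and discarding the portion over $\bd M$ gives exactly $\|\tilde\tau\|(U)=\int_{\pi(U)}\vartheta\,d\|\Xi(\tilde\tau)\|$, which is (\ref{Eq: reduced mass}). Taking $U=M$, together with $\M(\tilde\tau)=\|\tilde\tau\|(M)$ and $\M(\Xi(\tilde\tau))=\|\Xi(\tilde\tau)\|(M/G)$ for the canonical representatives and the uniform two-sided bound $\inf_{p\in M}\vartheta(G\cdot p)\le\vartheta\le\sup_{p\in M}\vartheta(G\cdot p)$ (finite and positive since $M=M^{prin}$ is compact), yields $\inf\vartheta\cdot\M(\Xi(\tilde\tau))\le\M(\tilde\tau)\le\sup\vartheta\cdot\M(\Xi(\tilde\tau))$. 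Applying this to $\tilde\tau_1-\tilde\tau_2$ and using additivity of $\Xi$ gives $\M(\Xi(\tilde\tau_1)-\Xi(\tilde\tau_2))\le\frac{1}{\inf\vartheta}\M(\tilde\tau_1-\tilde\tau_2)$ and the symmetric bound $\M(\Xi^{-1}(\sigma_1)-\Xi^{-1}(\sigma_2))\le\sup\vartheta\,\M(\sigma_1-\sigma_2)$, i.e.\ the claimed Lipschitz constants; in particular $\Xi$ is a homeomorphism for the $\M$-topologies.

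The main obstacle I expect is the boundary bookkeeping: verifying that $M/G$ is genuinely a manifold with boundary $\pi(\bd M)$ (so that the relative-cycle formalism transfers and $\pi,\pi^{-1}$ interchange $\bd M$ with $\bd M/G$), that $\pi$ and $\pi^{-1}$ respect the equivalence relation $\sim$ — which is exactly where the strict positivity of $\vartheta$ enters through (\ref{Eq: reduce Caccioppoli sets boundary}) — and that the canonical representatives on the two sides correspond under $\pi$, so that (\ref{Eq: reduced mass}) is an identity about class masses rather than about weights of ad hoc representatives. Once these points are in place, every assertion follows mechanically from Proposition \ref{Prop: reduce Caccioppoli $G$-sets}.
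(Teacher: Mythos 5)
Your proposal is correct and follows essentially the same route as the paper: defining $\Xi$ on boundary-type representatives via $[\bd\Omega]\mapsto[\bd\pi(\Omega)]$, reading off the mass identity and Lipschitz bounds from the perimeter formula (\ref{Eq: reduce Caccioppoli sets boundary}), and inverting via $\pi^{-1}$ using the converse half of Proposition \ref{Prop: reduce Caccioppoli $G$-sets}. You spell out the well-definedness (symmetric-difference argument) and $\mZ_2$-additivity steps a bit more explicitly than the paper's terse version, but these are exactly the ingredients the paper is implicitly using.
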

\begin{proof}
	By definitions, for any $\tilde{\tau}\in \Z_{n}^G(M,\partial M;\mZ_2)$, there exists $\widetilde{\Omega}\in \C^G(M)$ so that $\tilde{\tau} = [\partial \widetilde{\Omega}] = [\partial (M-\widetilde{\Omega})]$. 
	By Proposition \ref{Prop: reduce Caccioppoli $G$-sets}, we have $\tau:= [\partial \pi(\widetilde{\Omega})] = [\partial  \pi(M-\widetilde{\Omega})] \in \Z_{l}(M/G, \partial M/G; \mZ_2)$ and $\M(\tau) = \|\partial \pi(\widetilde{\Omega})\|(\interior(M/G)) \leq C_1 \|\partial \widetilde{\Omega}\|(\interior(M)) = C_1\M(\tilde{\tau})$, where $C_1 = 1/\inf_{p\in M}\vartheta(G\cdot p)$. 
	Hence, the map $\Xi: \tilde{\tau} \mapsto \tau$ is Lipschitz continuous with ${\rm Lip}(\Xi)\leq C_1$, and (\ref{Eq: reduced mass}) follows from (\ref{Eq: reduce Caccioppoli sets boundary}). 
	
	On the other hand, for any $\tau\in \Z_{l}(M/G, \partial M/G; \mZ_2)$, we take $\Omega\in \C(M/G)$ so that $\tau = [\partial \Omega] = [\partial (M/G - \Omega)]$. 
	By Proposition \ref{Prop: reduce Caccioppoli $G$-sets}, the map $\Lambda: \tau \mapsto \tilde{\tau}: = [\partial  \pi^{-1}(\Omega)]$ is a Lipschitz continuous map with ${\rm Lip}(\Lambda)\leq \sup_{p\in M}\vartheta(G\cdot p)$, and $ \Lambda\circ\Xi = id$, $\Xi\circ\Lambda=id$. 
	Therefore, $\Xi$ is a bi-Lipschitz homeomorphism. 
\end{proof}


Recall $\vartheta : M/G\to \R^+$ given by (\ref{Eq: orbit volume}) is the volume function of orbits. 
Then, for a compact Riemannian manifold $(M, g_{_M})$ with $M=M^{prin}$, we can rescale the induced metric $g_{_{M/G}}$ of the orbit space $M/G$ as 
\begin{equation}\label{Eq: rescaled metric}
	\tilde{g}_{_{M/G}}([p]) := (\vartheta([p]))^{\frac{2}{l}}\cdot g_{_{M/G}}([p]).
\end{equation}
Thus, for any $G$-invariant $n$-rectifiable set $\tilde{\Gamma}$, we have the Jacobian $J_\pi^{\tilde{\Gamma}*} (p)= \vartheta(\pi(q))$ in the co-area formula under the rescaled metric $\tilde{g}_{_{M/G}}$.  
As in (\ref{Eq: co-area for rectifiable set})(\ref{Eq: reduce Caccioppoli sets boundary}), we obtain 
\begin{equation}\label{Eq: reduce caccioppoli set under rescaled metric}
	\|\partial \Omega \|_{g_{_M}}(U) = \|\partial \pi(\Omega)\|_{\tilde{g}_{_{M/G}}}(\pi(U)), 
\end{equation}
where the subscripts are used to emphasize the metrics. 
Therefore, the map 
\begin{equation}\label{Eq: homeomorphism is isometry}
	\Xi : \Z_{n}^G(M,\partial M; \M_{g_{_M}}; \mZ_2) \to \Z_{l-1}(M/G, \partial M/G; \M_{\tilde{g}_{_{M/G}}}; \mZ_2), \quad \Xi([\bd\Omega]) := [\bd\pi(\Omega)] ,
\end{equation}
given in Theorem \ref{Thm: homeomorphism} is an isometry under the rescaled metric $\tilde{g}_{_{M/G}}$, where $\M_{g_{_M}} $ and $\M_{\tilde{g}_{_{M/G}}}$ are the mass norm with respect to the metric $g_{_M}$ and $\tilde{g}_{_{M/G}}$ respectively. 

Now, for any $\M_{g_{_M}}$-continuous $(G,p)$-sweepout $\Phi\in \mathcal{P}^G_p(M,g_{_M})$, since $\Xi$ is an isometry, we have $\Xi\circ\Phi \in \mathcal{P}_p(M/G, \tilde{g}_{_{M/G}})$ is an $\M_{\tilde{g}_{_{M/G}}}$-continous $p$-sweepout in $M/G$ such that $\M_{\tilde{g}_{_{M/G}}}(\Xi\circ\Phi(x)) = \M_{g_{_M}}(\Phi(x))$, $\forall x\in{\rm dmn}(\Phi)$. 
Hence, we have the following theorem:
\begin{theorem}\label{Thm: weyl law for single orbit type}
	Let $(M, g_{_M})$ be a connected compact Riemannian manifold with smooth and possibly empty boundary. 
	Let $G$ be a compact Lie group acting by isometries on $M$ with ${\rm Cohom}(G)=l+1\geq 2$. 
	Suppose there only exists a single orbit type on $M$, i.e. $M=M^{prin}$. 
	Then $\omega_p^G(M,g_{_M}) = \omega_p(M/G, \tilde{g}_{_{M/G}})$ and 
	\begin{eqnarray}\label{Eq: Weyl law for single orbit type}
		\lim_{p\to\infty} p^{-\frac{1}{l+1}}\omega_p^G(M,g_{_M}) 
		&=& a(l)\cdot {\rm Vol}(M/G, \tilde{g}_{_{M/G}})^{\frac{l}{l+1}} 
		\\
		&=& a(l)\cdot \Big( \int_M (\vartheta(G\cdot q))^{\frac{1}{l}}d\mH^{n+1}(q) \Big)^{\frac{l}{l+1}}, \nonumber
	\end{eqnarray}
	where $\tilde{g}_{_{M/G}}$ is given by (\ref{Eq: rescaled metric}), and $a(l)$ is a constant depending only on $l$ (cf. \cite[Theorem 1.1]{liokumovich2018weyl}). 
\end{theorem}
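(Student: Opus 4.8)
The plan is to use the isometry $\Xi$ of (\ref{Eq: homeomorphism is isometry}) to identify the equivariant volume spectrum of $(M,g_{_M})$ with the ordinary volume spectrum of the orbit space $(M/G,\tilde g_{_{M/G}})$, and then to invoke the Weyl law of Liokumovich--Marques--Neves on $M/G$. First I would record that, since $M=M^{prin}$, the quotient $M/G$ is a compact $(l+1)$-dimensional manifold with (possibly empty) boundary $\partial M/G$, and that the orbit-volume function $\vartheta$ of (\ref{Eq: orbit volume}) is smooth and strictly positive on $M/G$, so the rescaled metric $\tilde g_{_{M/G}}$ in (\ref{Eq: rescaled metric}) is a genuine smooth Riemannian metric (smooth up to $\partial M/G$). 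By Proposition \ref{Prop: p-width using M-continuous map}, together with its classical analogue for $M/G$, both $\omega_p^G(M,g_{_M})$ and $\omega_p(M/G,\tilde g_{_{M/G}})$ may be computed as the infimum of $\sup_x\M(\Phi(x))$ over $\M$-continuous $(G,p)$-sweepouts, respectively $\M$-continuous $p$-sweepouts; and by Lemma \ref{Lem: no concentration of mass on orbits} the no-concentration-of-mass conditions are automatic in this $\M$-continuous setting.

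The key step is then to produce a mass-preserving bijection between these two classes of sweepouts. By Theorem \ref{Thm: homeomorphism}, $\Xi$ is a homeomorphism of the $\M$-topology cycle spaces, so it induces a ring isomorphism $\Xi^{*}$ on $\mZ_2$-cohomology; by Theorem \ref{Thm: Almgren isomorphism} (applied equivariantly on $M$ and non-equivariantly on $M/G$) the relevant first cohomology groups are both $\mZ_2=\{0,\bar\lambda\}$, whence $\Xi^{*}\bar\lambda=\bar\lambda$ and therefore $\Xi^{*}\bar\lambda^{p}=\bar\lambda^{p}$. Consequently $\Phi^{*}\bar\lambda^{p}\neq 0$ if and only if $(\Xi\circ\Phi)^{*}\bar\lambda^{p}\neq 0$, i.e. $\Phi$ is a $(G,p)$-sweepout of $M$ exactly when $\Xi\circ\Phi$ is a $p$-sweepout of $M/G$, and the same holds for $\Xi^{-1}$. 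Since (\ref{Eq: homeomorphism is isometry}) gives $\M_{\tilde g_{_{M/G}}}(\Xi\circ\Phi(x))=\M_{g_{_M}}(\Phi(x))$ for every $x$, taking infima over the two classes yields $\omega_p^G(M,g_{_M})=\omega_p(M/G,\tilde g_{_{M/G}})$, the first assertion of the theorem.

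For the asymptotics I would then apply the Weyl law \cite[Theorem 1.1]{liokumovich2018weyl} to the compact $(l+1)$-dimensional manifold $(M/G,\tilde g_{_{M/G}})$ to get $\lim_{p\to\infty}p^{-1/(l+1)}\omega_p(M/G,\tilde g_{_{M/G}})=a(l)\,{\rm Vol}(M/G,\tilde g_{_{M/G}})^{l/(l+1)}$, which combined with the previous step gives the first equality in (\ref{Eq: Weyl law for single orbit type}). For the second equality I would compute the volume: since $\dim(M/G)=l+1$ and $\tilde g_{_{M/G}}=\vartheta^{2/l}g_{_{M/G}}$, the volume density rescales by $\vartheta^{(l+1)/l}$, so ${\rm Vol}(M/G,\tilde g_{_{M/G}})=\int_{M/G}\vartheta([p])^{(l+1)/l}\,d\mathrm{vol}_{g_{_{M/G}}}$; on the other hand $\pi:M\to M/G$ is a Riemannian submersion with fibre $\pi^{-1}([p])=G\cdot p$ of $\mH^{n-l}$-measure $\vartheta([p])$, so the co-area formula applied to $\vartheta^{1/l}$ gives $\int_M\vartheta(G\cdot q)^{1/l}\,d\mH^{n+1}(q)=\int_{M/G}\vartheta([p])^{1/l}\cdot\vartheta([p])\,d\mathrm{vol}_{g_{_{M/G}}}={\rm Vol}(M/G,\tilde g_{_{M/G}})$, which is what is needed.

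The one genuinely delicate point I anticipate is the cohomological bookkeeping in the key step: one needs to know that $\Xi$ is a bona fide homeomorphism of the correct cycle spaces (not merely a bi-Lipschitz bijection of sets) and that the cohomology of these spaces agrees in the $\M$- and $\F$-topologies, so that the cup-product condition defining $(G,p)$-sweepouts is genuinely transported along $\Xi$. Both facts are furnished by Theorems \ref{Thm: homeomorphism} and \ref{Thm: Almgren isomorphism}; granting them, the remaining ingredients---the co-area volume identity and the application of the Weyl law---are routine.
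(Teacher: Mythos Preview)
Your proposal is correct and follows essentially the same approach as the paper: use the isometry $\Xi$ of (\ref{Eq: homeomorphism is isometry}) to identify $\omega_p^G(M,g_{_M})$ with $\omega_p(M/G,\tilde g_{_{M/G}})$, then apply the Liokumovich--Marques--Neves Weyl law on the quotient and compute the volume via the co-area formula. The paper's proof is terser---it simply asserts that the isometry of cycle spaces yields equality of widths---whereas you spell out the cohomological bookkeeping (that $\Xi^*\bar\lambda=\bar\lambda$ and hence $(G,p)$-sweepouts correspond to $p$-sweepouts); this is a welcome elaboration of a step the paper takes for granted.
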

\begin{proof}
	Since $\Z_{n}^G(M,\partial M; \M_{g_{_M}}; \mZ_2)$ is isometric to $\Z_{l-1}(M/G, \partial M/G; \M_{\tilde{g}_{_{M/G}}}; \mZ_2)$, we immediately have $\omega_p^G(M,g_{_M}) = \omega_p(M/G, \tilde{g}_{_{M/G}})$. 
	Then, (\ref{Eq: Weyl law for single orbit type}) follows from Remark \ref{Rem: equivalence between p-widths definition}, Weyl law \cite[Theorem 1.1]{liokumovich2018weyl} in $(M/G, \tilde{g}_{_{M/G}})$, (\ref{Eq: rescaled metric})(\ref{Eq: reduce caccioppoli set under rescaled metric}) and the co-area formula. 
\end{proof}

In the above theorem, the smoothness assumption of $\partial M$ can be weakened. 
Specifically, we have the following corollary. 
\begin{corollary}\label{Cor: weyl law for single type}
	Let $(\widetilde{M}, g_{_{\widetilde{M}}})$ be a smooth Riemannian manifold with a compact Lie group $G$ acting isometrically so that ${\rm Cohom}(G)=l+1\geq 2$.
	Suppose $M\subset\subset \widetilde{M}^{prin}$ is a compact $G$-invariant domain  whose boundary $\partial M$ is the union of a finite number of smooth embedded $G$-hypersurfaces $\{\widetilde{\Sigma}_k\}_{k=1}^K$ that meet each other transversally. 
	Then (\ref{Eq: Weyl law for single orbit type}) is also valid. 
\end{corollary}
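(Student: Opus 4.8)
The plan is to mimic the proof of Theorem~\ref{Thm: weyl law for single orbit type}: first pass to the smooth orbit space through the homeomorphism $\Xi$, and then squeeze the now only piecewise-smooth domain between two smooth ones. The key observation is that the construction of the bi-Lipschitz homeomorphism $\Xi$ in Theorem~\ref{Thm: homeomorphism}, together with Proposition~\ref{Prop: reduce Caccioppoli $G$-sets} and Lemma~\ref{Lem: reduce rectifiable G-set}, uses only that $M=M^{prin}$ and that $M\subset\subset\widetilde M^{prin}$; the smoothness of $\bd M$ plays no role, since $\Z_n^G(M,\bd M;\mZ_2)$ and $\C^G(M)$ are defined for any compact $G$-invariant domain. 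Since $\pi\colon\widetilde M^{prin}\to\widetilde M^{prin}/G$ is a Riemannian submersion, $\pi(M)\subset\subset\widetilde M^{prin}/G$ is a compact connected domain inside the smooth Riemannian manifold $(\widetilde M^{prin}/G,\tilde g_{_{M/G}})$, where $\tilde g_{_{M/G}}$ is the rescaled metric (\ref{Eq: rescaled metric}), and its boundary $\bd\pi(M)=\bigcup_{k=1}^K\pi(\widetilde\Sigma_k)$ is again a finite union of smooth embedded hypersurfaces meeting one another transversally. Arguing exactly as in (\ref{Eq: reduce caccioppoli set under rescaled metric})--(\ref{Eq: homeomorphism is isometry}) and in the proof of Theorem~\ref{Thm: weyl law for single orbit type}, $\Xi$ restricts to an isometry $\Z_n^G(M,\bd M;\M_{g_{_M}};\mZ_2)\to\Z_l(\pi(M),\bd\pi(M);\M_{\tilde g_{_{M/G}}};\mZ_2)$ carrying $(G,p)$-sweepouts with no concentration of mass on orbits to $p$-sweepouts with no concentration of mass, so that $\omega_p^G(M,g_{_M})=\omega_p(\pi(M),\tilde g_{_{M/G}})$ for every $p\in\mZ_+$. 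It therefore remains to prove the Weyl asymptotics for the compact domain $\pi(M)$, whose boundary is a transversal union of smooth hypersurfaces, inside a smooth Riemannian manifold.

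For this I would approximate $\pi(M)$ from inside and from outside by smooth domains: the non-smooth locus of $\bd\pi(M)$ is a finite union of embedded submanifolds of dimension $\le l-1$, and smoothing the corners inside a small $\tilde g_{_{M/G}}$-neighbourhood of this locus yields, for each small $\ep>0$, compact connected smooth domains $N^-_\ep\subset\pi(M)\subset N^+_\ep\subset\subset\widetilde M^{prin}/G$ with ${\rm Vol}(N^\pm_\ep,\tilde g_{_{M/G}})\to{\rm Vol}(\pi(M),\tilde g_{_{M/G}})$ as $\ep\to0$, the volume convergence being immediate since $N^\pm_\ep\,\triangle\,\pi(M)$ lies in a shrinking neighbourhood of the $\tilde g_{_{M/G}}$-null set $\bd\pi(M)$. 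Next I would use the monotonicity of the volume spectrum under inclusion of compact subdomains: given a $p$-sweepout $\Phi$ of the larger domain, a suitable restriction $x\mapsto\Phi(x)\llcorner(\cdot)$ into the relative cycle space of the smaller domain is again a $p$-sweepout, has no greater mass, and has no concentration of mass, because the restriction of a Morse sweepout is a Morse sweepout (cf.\ \cite{liokumovich2018weyl}); hence $\omega_p(N^-_\ep,\tilde g_{_{M/G}})\le\omega_p(\pi(M),\tilde g_{_{M/G}})\le\omega_p(N^+_\ep,\tilde g_{_{M/G}})$ for all $p$. Applying the Weyl law \cite[Theorem~1.1]{liokumovich2018weyl} to the smooth domains $N^\pm_\ep$ and letting $p\to\infty$ gives
\[a(l)\,{\rm Vol}(N^-_\ep,\tilde g_{_{M/G}})^{\frac{l}{l+1}}\le\liminf_{p\to\infty}p^{-\frac1{l+1}}\omega_p(\pi(M),\tilde g_{_{M/G}})\le\limsup_{p\to\infty}p^{-\frac1{l+1}}\omega_p(\pi(M),\tilde g_{_{M/G}})\le a(l)\,{\rm Vol}(N^+_\ep,\tilde g_{_{M/G}})^{\frac{l}{l+1}},\]
and letting $\ep\to0$ forces both ends to $a(l)\,{\rm Vol}(\pi(M),\tilde g_{_{M/G}})^{l/(l+1)}$, so the limit exists and has that value.

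Finally, since $\tilde g_{_{M/G}}=\vartheta^{2/l}g_{_{M/G}}$ has volume density $\vartheta^{(l+1)/l}$ relative to $g_{_{M/G}}$, the co-area formula for the Riemannian submersion $\pi$ (as in (\ref{Eq: co-area for rectifiable set})) gives ${\rm Vol}(\pi(M),\tilde g_{_{M/G}})=\int_{\pi(M)}\vartheta([q])^{\frac{l+1}{l}}\,d\mH^{l+1}_{g_{_{M/G}}}([q])=\int_M(\vartheta(G\cdot q))^{\frac1l}\,d\mH^{n+1}(q)$, so the two right-hand sides in (\ref{Eq: Weyl law for single orbit type}) agree and the corollary follows. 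I expect the one genuinely delicate point to be the domain-monotonicity of the volume spectrum invoked above for the intermediate, merely piecewise-smooth domain $\pi(M)$: one must check with a little care that the restriction to $\bd\pi(M)$ produces honest relative cycles and that the cup-length condition survives. If preferred, this can be sidestepped entirely by rerunning the proof of \cite[Theorem~1.1]{liokumovich2018weyl} directly on $\pi(M)$, since that argument---Lusternik--Schnirelmann for the lower bound, a Guth-type bend-and-cancel sweepout for the upper bound---uses nothing about $\bd\pi(M)$ beyond its having finite $\mH^{l}$-measure, which holds here.
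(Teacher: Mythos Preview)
Your argument is correct; the reduction to the orbit space via $\Xi$ is exactly what the paper does, and your alternative at the end---rerunning the proof of \cite[Theorem~1.1]{liokumovich2018weyl} directly on the piecewise-smooth domain $\pi(M)$---is precisely the paper's proof, which simply observes that the arguments of \cite[Theorems~4.1,~4.2]{liokumovich2018weyl} go through for piecewise-smooth boundary and then repeats the proof of Theorem~\ref{Thm: weyl law for single orbit type} verbatim.

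Your primary route, squeezing $\pi(M)$ between smooth domains $N^-_\ep\subset\pi(M)\subset N^+_\ep$ and invoking domain monotonicity of the volume spectrum, is a genuinely different way to handle the piecewise-smooth boundary. It has the virtue of being self-contained and reducing cleanly to the smooth case of \cite{liokumovich2018weyl}, at the cost of needing the monotonicity $\omega_p(A)\le\omega_p(B)$ for $A\subset B$---which, as you correctly flag, is the delicate point. The paper does in fact establish exactly this monotonicity (see the remark following Theorem~\ref{Thm: lower bound weyl law}, based on Claim~\ref{Claim: sweepout subset}(a)), but via an inward flow rather than by naive restriction of cycles; your sketch ``restriction of a Morse sweepout is a Morse sweepout'' is not quite how it is done, since one needs to produce relative cycles in the smaller domain with controlled mass, and simple restriction does not obviously preserve the cup-length condition. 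So your instinct to flag this and fall back on the direct argument is sound.
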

\begin{proof}
	By the proof in \cite[Theorem 4.1, 4.2]{liokumovich2018weyl}, Weyl law in compact manifolds (\cite[Theorem 4.2]{liokumovich2018weyl}) also holds for $M/G$ with piecewise smooth boundary. 
	Then the arguments can be taken verbatim from above. 
\end{proof}

\subsection{Weyl law in general $G$-manifolds}\label{Subsec: general manifold}

In this subsection, we consider a general connected compact Riemannian manifold $(M^{n+1}, g_{_M})$ with smooth (possibly empty) boundary, and a compact Lie group $G$ acting by isometries on $M$ with cohomogeneity ${\rm Cohom}(G)=l+1\geq 2$. 
Note the union of principal orbits $M^{prin}$ forms an open dense subset of $M$. 
In particular, $\mH^{n+1}(M\setminus M^{prin} ) = 0 $.
Additionally, since $M$ is compact, the number orbit types in $M$ is finite, which implies 
\[\mH^n(M \setminus M^{prin} ) \leq C_{M,G} < \infty. \]
Denote by $g_{_{M/G}}$ the induced Riemannian metric on $M^{prin}/G$ so that $\pi:M^{prin}\to M^{prin}/G$ is a Riemannian submersion. 
We also define $\tilde{g}_{_{M/G}}$ to be the rescaled the Riemannian metric on $M^{prin}/G$ as in (\ref{Eq: rescaled metric}).

To begin with, let us show the following result using Theorem \ref{Thm: weyl law for single orbit type} and the Lusternik-Schnirelmann Inequality in \cite[Theorem 3.1]{liokumovich2018weyl}. 
\begin{theorem}\label{Thm: lower bound weyl law}
	$\liminf_{p\to\infty}p^{-\frac{1}{l+1}}\omega_p^G(M, g_{_M}) \geq  a(l){\rm Vol}(M^{prin}/G, \tilde{g}_{_{M/G}})^{\frac{l}{l+1}}$. 
\end{theorem}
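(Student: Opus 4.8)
The goal is a lower bound for $\liminf_{p\to\infty}p^{-\frac{1}{l+1}}\omega_p^G(M,g_{_M})$ matching the Weyl limit on $M^{prin}/G$ under the weighted metric $\tilde g_{_{M/G}}$. The strategy is to exhaust $M^{prin}$ from the inside by compact $G$-invariant domains and use monotonicity of widths together with the single-orbit-type Weyl law already established in Corollary \ref{Cor: weyl law for single type}. Concretely, given $\varepsilon>0$, I would first choose a compact $G$-invariant domain $\Omega\subset\subset M^{prin}$ with piecewise-smooth boundary (obtained by removing a small $G$-tubular neighborhood of the finitely many non-principal strata $M\setminus M^{prin}$, whose total $\mathcal H^n$-measure is finite) such that ${\rm Vol}(\Omega/G,\tilde g_{_{M/G}})\geq {\rm Vol}(M^{prin}/G,\tilde g_{_{M/G}})-\varepsilon$; this uses that $M^{prin}$ is open and dense and that $\mathcal H^{n+1}(M\setminus M^{prin})=0$, so the weighted volume of the quotient, being $\int_M(\vartheta(G\cdot q))^{1/l}\,d\mathcal H^{n+1}(q)$, is exhausted by such $\Omega$'s.

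**Key steps.** First, establish the monotonicity $\omega_p^G(M,g_{_M})\geq \omega_p^G(\Omega,g_{_M}|_\Omega)$ for each $p$. The natural argument, paralleling \cite[Theorem 3.1]{liokumovich2018weyl}, is Lusternik--Schnirelmann: restriction of $G$-invariant relative cycles in $M$ to $\Omega$ induces a map on cycle spaces compatible with the cohomology class $\bar\lambda$, so the restriction of a $(G,p)$-sweepout of $M$ whose image stays in a fixed small $\mathcal F$-ball would detect $\bar\lambda^p$ on $\Omega$ as well; one shows that if $\Phi$ is a $(G,p)$-sweepout of $M$ then its restriction is a $(G,p)$-sweepout of $\Omega$ (here one uses $\Omega\subset\subset M$ so the restriction map on $\mathcal Z_n^G$ is well-behaved, together with Theorem \ref{Thm: Almgren isomorphism} identifying the relevant cohomology with $\mathbb Z_2$), and that $\mathbf M(\Phi(x)\llcorner\Omega)\leq\mathbf M(\Phi(x))$. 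Taking infima gives the inequality of widths. Second, apply Corollary \ref{Cor: weyl law for single type} to $\Omega$, which has a single orbit type and piecewise-smooth boundary, to get
\[
\lim_{p\to\infty}p^{-\frac1{l+1}}\omega_p^G(\Omega,g_{_M}|_\Omega)=a(l){\rm Vol}(\Omega/G,\tilde g_{_{M/G}})^{\frac{l}{l+1}}\geq a(l)\big({\rm Vol}(M^{prin}/G,\tilde g_{_{M/G}})-\varepsilon\big)^{\frac{l}{l+1}}.
\]
Third, combine: $\liminf_p p^{-1/(l+1)}\omega_p^G(M,g_{_M})\geq a(l)({\rm Vol}(M^{prin}/G,\tilde g_{_{M/G}})-\varepsilon)^{l/(l+1)}$ for every $\varepsilon>0$, and let $\varepsilon\to 0$.

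**Main obstacle.** The delicate point is the monotonicity step $\omega_p^G(M)\geq\omega_p^G(\Omega)$, i.e. verifying that restricting a $(G,p)$-sweepout of $M$ to $\Omega\subset\subset M^{prin}$ yields a genuine $(G,p)$-sweepout of $\Omega$. One must check that the restriction map $T\mapsto T\llcorner\Omega$ descends to a continuous map $\mathcal Z_n^G(M,\partial M;\mathbb Z_2)\to\mathcal Z_n^G(\Omega,\partial\Omega;\mathbb Z_2)$ in the flat topology (for a.e. choice of the cutting radius defining $\partial\Omega$, via a slicing/coarea argument so that $\partial(T\llcorner\Omega)$ is controlled and supported in $\partial\Omega$), and that it pulls the generator $\bar\lambda_\Omega$ of $H^1(\mathcal Z_n^G(\Omega,\partial\Omega;\mathbb Z_2);\mathbb Z_2)$ back to $\bar\lambda_M$; since $\Omega\hookrightarrow M$ is $G$-equivariant and $\partial\Omega$ lies in the interior of $M$, the detection-of-$\bar\lambda$ argument from \cite[Theorem 3.1]{liokumovich2018weyl}, adapted equivariantly using Theorem \ref{Thm: Almgren isomorphism}, carries over. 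Once this is in place the rest is the soft exhaustion/continuity argument sketched above; I would expect the write-up to mirror \cite[Theorem 3.1]{liokumovich2018weyl} closely, substituting $G$-invariant cycle spaces and Corollary \ref{Cor: weyl law for single type} at the appropriate places.
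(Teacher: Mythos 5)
Your approach is correct in substance but follows a genuinely different decomposition than the paper. The paper's proof proceeds by the Lusternik--Schnirelmann cup-product argument (mirroring \cite[Theorem 3.1]{liokumovich2018weyl}): it chooses a large finite collection of pairwise disjoint $G$-tubes $\{\tB^G_{r_k}(y_k)\}_{k=1}^K\subset M^{prin}\setminus\bd M$ whose quotients nearly fill $M^{prin}/G$ in the weighted metric, allocates $p_k\approx p\cdot {\rm Vol}(B_k,\tilde g_{_{M/G}})/{\rm Vol}(M^{prin}/G,\tilde g_{_{M/G}})$ parameters to the $k$-th tube, and uses a two-part Claim (push-in restriction yields a sweepout of each tube; plus the vanishing-of-$\lambda^{p_k}$ statement) together with the cup-length argument to deduce $\omega^G_p(M)\geq\sum_k\omega^G_{p_k}(B^G_k)$, and then applies Theorem \ref{Thm: weyl law for single orbit type} to each tube. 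You instead exhaust $M^{prin}$ by a single large $G$-invariant domain $\Omega$ with piecewise-smooth boundary and reduce to the monotonicity $\omega^G_p(M)\geq\omega^G_p(\Omega)$, then apply Corollary \ref{Cor: weyl law for single type} to $\Omega$. This avoids the cup-product bookkeeping and the integer allocation of $p_k$'s, and the needed monotonicity is essentially the $K=1$, $q=p$ instance of the paper's Claim \ref{Claim: sweepout subset}; indeed the paper's own remark following this theorem confirms that Claim (a)'s push-in construction extends to show $\omega^G_p(A)\leq\omega^G_p(B)$ for nested piecewise-smooth $G$-domains. Two caveats: (1) as you note, bare restriction $T\mapsto T\llcorner\Omega$ is not $\F$-continuous, so the monotonicity really does require the push-in diffeomorphism (rather than a purely slicing-based fix) to produce an $\M$-continuous sweepout of $\Omega$ with the right mass bound, just as in Claim (a); and (2) your $\Omega$ touches $\bd M$, so the relative cycle space $\Z^G_n(\Omega,\bd\Omega;\mZ_2)$ mods out both the piece of $\bd M$ and the cut boundary, an issue the paper sidesteps by choosing tubes disjoint from $\bd M$ -- this is a bookkeeping point, not an obstruction, but it is one place where the two routes differ concretely. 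Both routes ultimately rest on the single-orbit-type Weyl law; the paper's buys a self-contained LS argument parallel to LMN, yours buys brevity by leaning harder on Corollary \ref{Cor: weyl law for single type}.
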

\begin{proof}
	For any $\epsilon >0$, choose a collection of $G$-tubes $\{\tB_{r_k}^G(y_k)\}_{k=1}^K \subset M^{prin}\setminus\bd M$ so that
	\begin{itemize}
		\item  $\sum_{k=1}^K {\rm Vol}(\pi(\tB_{r_k}^G(y_k)), ~\tilde{g}_{_{M/G}}) \geq (1+ \epsilon )^{-1} {\rm Vol}(M^{prin}/G, ~\tilde{g}_{_{M/G}}) $;
		\item $\tB_{r_k}^G(y_k)\cap \tB_{r_j}^G(y_j) = \emptyset$ for all $k\neq j \in \{1,\dots, K\}$;
		\item $r_k < \frac{1}{4}\min\{\inj(G\cdot y_k), \dist_{M}(G\cdot y_k, \bd M)\}$, for all $k\in \{1,\dots, K\}$. 
	\end{itemize}
	For simplicity, denote by $B^G_k := \tB^G_{r_k}(y_k)$ and $B_k = \pi(\tB^G_{r_k}(y_k))$. 
	
	\begin{claim}\label{Claim: sweepout subset}
		Given $1\leq k\leq K$ and an $\M$-continuous $\Phi\in \mathcal{P}_p^G(M)$ with $X={\rm dmn}(\Phi)$, then 
		\begin{itemize}
			\item[(a)] for any $\epsilon_1>0$, there is an $\M$-continuous $(G,p)$-sweepout $\hat{\Phi}_k: X\to \Z_n^G(B^G_k, \partial B^G_k; \mZ_2)$ of $B^G_k$ so that $\M(\hat{\Phi}_k(x)) \leq 	(1+ \epsilon_1)^k\M(\Phi(x)\llcorner B^G_k )$ for all $x\in X$; 
			\item[(b)] for any integer $0\leq q \leq p$ and $\epsilon_2>0$, the open set 
				\[ \left\{x\in X : \M(\Phi(x) \llcorner B^G_k ) < \omega_q^G(B^G_k, g_{_M}) - \epsilon_2 \right\} \]
				is contained in an open set $U_k\subset X$ so that $(\iota_k^*\lambda)^q = (\iota_k^*\Phi^*\bar{\lambda})^q = 0 \in H^q(U_k;\mZ_2)$, where $\iota_k:U_k\to X$ is the inclusion map, $\bar{\lambda}$ is the generator of $H^1(\Z_n^G(M,\partial M;\mZ_2 ); \mZ_2 )$, and $\lambda := \Phi^*\bar{\lambda}\in H^1(X;\mZ_2)$.  
		\end{itemize} 
	\end{claim}
	\begin{proof}[Proof of Claim \ref{Claim: sweepout subset}]
		The proof is parallel to the one in \cite[Lemma 2.15]{liokumovich2018weyl}, and we only point out some modifications. 
		For each $k\in \{1,\dots, K\}$, let $\eta_k\geq 0$ be a $G$-invariant smooth cut-off function compactly supported in $\tB_{4r_k}^G(y_k)\setminus \tB_{r_k/4}^G(y_k)$ so that $\eta_k = 1 $ in $ \tB_{2r_k}^G(y_k) \setminus \tB_{r_k/2}^G(y_k)$. 
		Denote by $\{F^k_t\}$ the equivariant diffeomorphisms generated by $Y_k := -\eta_k\cdot \nabla \dist_{M}(G\cdot y_k,\cdot) \in \mathfrak{X}^G(M)$. 
		Note $Y_k$ is the inward unit normal of $\tB^G_r(y_k)$ for $r$ close to $r_k$. 
		Hence, we can replace $R$, $Y$, $F_t$, $u$ in the constructions of \cite[Lemma 2.15]{liokumovich2018weyl} by $B^G_k$, $Y_k$, $F^k_t$, $\dist_{M}(G\cdot p_k,\cdot)$. 
		Additionally, we also use \cite[Theorem 4.13]{wang2023min} and Proposition \ref{Prop: homotopy between flat close maps} in place of \cite[Theorem 2.11, Proposition 2.12]{liokumovich2018weyl} used on \cite[Page 945-946]{liokumovich2018weyl}. 
		Then the rest of proof can be taken almost verbatim from \cite[Lemma 2.15]{liokumovich2018weyl}. 
	\end{proof}
	
	Now, for any integer $p\in\N$ and $k\in \{1,\dots, K\}$, define 
	\[p_k:= \left\lfloor p\cdot \frac{{\rm Vol}(B_k, \tilde{g}_{_{M/G}})}{{\rm Vol}(M^{prin}/G, \tilde{g}_{_{M/G}})} \right\rfloor \in \N ,\]
	where $\lfloor x\rfloor$ is the integer part of $x\in\R$. 
	Clearly, $0\leq p_k \leq p$, and $\bar{p}:=\sum_{k=1}^K p_k\leq p$. 
	
	Next, for any $\epsilon_2 >0$ and $\M$-continuous $\Phi\in \mathcal{P}_p^G(M)$ with $X={\rm dmn}(\Phi)$, take $q= p_k$ and consider the open set $U_k\subset X= {\rm dmn}(\Phi)$ given by Claim \ref{Claim: sweepout subset}(b).  
	Combining $(\iota_k^*\lambda)^{p_k} = 0$ in Claim \ref{Claim: sweepout subset}(b) with the exact sequence
	\[H^{p_k}(X, U_k; \mZ_2)  \xrightarrow{j^*} H^{p_k}(X; \mZ_2)  \xrightarrow{\iota_k^*} H^{p_k}(U_k; \mZ_2), \]
	we can find $\lambda_k\in H^{p_k}(X, U_k; \mZ_2) $ with $j^{*}(\lambda_k) = \lambda^{p_k}$. 
	Since $\Phi$ is a $(G,p)$-sweepout and $\bar{p}=\sum_{k=1}^K p_k\leq p$, we have $\lambda^p = (\Phi^*\bar{\lambda})^p \neq 0 $ and 
	\[j^*(\lambda_1 )\smile\cdots\smile j^*(\lambda_K ) = \lambda^{\sum_{k=1}^K p_k} = \lambda^{\bar{p}} \neq 0 \in H^{\bar{p}}(X;\mZ_2). \]
	If $X=\cup_{k=1}^K U_k$, then $\lambda_1\smile\cdots\smile \lambda_K \in H^{\bar{p}}(X, \cup_{k=1}^K U_k ;\mZ_2) = H^{\bar{p}}(X, X;\mZ_2) = 0$, which contradicts $0\neq \lambda^{\bar{p}} = j^*(\lambda_1\smile\cdots\smile \lambda_K)$. 
	Therefore, there exists $x\in X\setminus \cup_{k=1}^K U_k$, which satisfies $\M(\Phi(x)) \geq \sum_{k=1}^K \M(\Phi(x)\llcorner B^G_k )\geq \sum_{k=1}^K\omega_{p_k}^G(B^G_k, g_{_M}) - K\cdot \epsilon_2 $. 
	Since $\epsilon_2>0$ is arbitrary and $\Phi$ is an arbitrary $\M$-continuous $(G,p)$-sweepout of $M$, we conclude 
	\[\omega^G_p(M, g_{_M}) \geq \sum_{k=1}^K\omega_{p_k}^G(B^G_k, g_{_M})\] 
	by Proposition \ref{Prop: p-width using M-continuous map}. 
	Dividing by $p^{-\frac{1}{l+1}}$ on both sides, 
	\begin{eqnarray*}
		p^{-\frac{1}{l+1}} \omega^G_p(M, g_{_M}) &\geq & \sum_{k=1}^K p^{-\frac{1}{l+1}} \omega_{p_k}^G(B^G_k, g_{_M}) =  \sum_{k=1}^K  \left(\frac{p_k}{p} \right)^{\frac{1}{l+1}} \cdot p_k^{-\frac{1}{l+1}} \omega_{p_k}^G(B^G_k, g_{_M})
		\\&\geq& \sum_{k=1}^K  \left( \frac{{\rm Vol}(B_k, \tilde{g}_{_{M/G}})}{{\rm Vol}(M^{prin}/G, \tilde{g}_{_{M/G}})} - \frac{1}{p}  \right)^{\frac{1}{l+1}} \cdot p_k^{-\frac{1}{l+1}} \omega_{p_k}^G(B^G_k, g_{_M}).
	\end{eqnarray*}
	Noting $B^G_k\subset M^{prin}$, it follows from Theorem \ref{Thm: weyl law for single orbit type} and the choice of $\{B^G_k\}_{k=1}^K$ that 
	\begin{eqnarray*}
		\liminf_{p\to\infty} p^{-\frac{1}{l+1}} \omega^G_p(M, g_{_M}) &\geq &\sum_{k=1}^K \left( \frac{{\rm Vol}(B_k, \tilde{g}_{_{M/G}})}{{\rm Vol}(M^{prin}/G, \tilde{g}_{_{M/G}})} \right)^{\frac{1}{l+1}} \cdot a(l) {\rm Vol}(B_k, \tilde{g}_{_{M/G}})^{\frac{l}{l+1}}
		\\&=& a(l) {\rm Vol}(M^{prin}/G, \tilde{g}_{_{M/G}})^{-\frac{1}{l+1}} \sum_{k=1}^K {\rm Vol}(B_k, \tilde{g}_{_{M/G}})
		\\&\geq & a(l) {\rm Vol}(M^{prin}/G, \tilde{g}_{_{M/G}})^{\frac{l}{l+1}}\cdot (1+\epsilon )^{-1}. 
	\end{eqnarray*}
	Finally, the desired estimate follows from taking $\epsilon\to 0$. 
\end{proof}

\begin{remark}
	We mention that the result in Claim \ref{Claim: sweepout subset}(a) can be adapted to more general cases and show $\omega^G_p(A)\leq \omega^G_p(B)$ for any $G$-invariant domains $A\subset B\subset M$ with piecewise smooth boundary. 
\end{remark}

Next, we shall make use of the density of $M^{prin}$ to show the Weyl Law of the equivariant volume spectrum $\{\omega^G_p(M,g_{_M})\}_{p\in\mZ_+}$. 

Recall $(\widetilde{M}, g_{_{\widetilde{M}}})$ is a closed Riemannian manifold with $G$ acts by isometries of ${\rm Cohom}(G)=l+1\geq 2$ so that $M$ is equivariantly and isometrically embedded in $\widetilde{M}$ as a smooth compact domain (see Section \ref{Subsec: notations in manifolds}). 
Let $g_{_{\widetilde{M}/G}}$ be the induced Riemannian metric on $\widetilde{M}^{prin}/G$ so that $\pi:\widetilde{M}^{prin}\to \widetilde{M}^{prin}/G$ is a Riemannian submersion. 
Then, we also define the rescaled the Riemannian metric $\tilde{g}_{_{\widetilde{M}/G}}$ on $\widetilde{M}^{prin}/G$ as in (\ref{Eq: rescaled metric}) so that $\tilde{g}_{_{\widetilde{M}/G}}= \tilde{g}_{_{M/G}}$ in $M^{prin}/G$. 
Additionally, denote by $\{\widetilde{M}_{(H_j)}\}_{j=1}^J$, $J\in\N$, the non-principal orbit type strata in $\widetilde{M}\setminus\widetilde{M}^{prin}$. 

\begin{theorem}
	Suppose $(M^{n+1}, g_{_M})$ is an orientable connected compact Riemannian manifold with smooth (possibly empty) boundary, and $G$ is a compact Lie group acting by isometries on $M$ with cohomogeneity ${\rm Cohom}(G)=l+1\geq 2$. 
	Then, 
	\begin{eqnarray*}
		\lim_{p\to\infty}p^{-\frac{1}{l+1}}\omega_p^G(M, g_{_M}) &=&  a(l){\rm Vol}(M^{prin}/G, \tilde{g}_{_{M/G}})^{\frac{l}{l+1}}
		\\&=& a(l) \left( \int_{M} (\vartheta(G\cdot q))^{\frac{1}{l}}d\mH^{n+1}(q) \right)^{\frac{l}{l+1}},
	\end{eqnarray*}
	where $\tilde{g}_{_{M/G}}$ is the rescaled metric defined as in (\ref{Eq: rescaled metric}), $\vartheta$ is the orbits volume function, and $a(l)$ is a constant depending only on $l$ given in \cite[Theorem 1.1]{liokumovich2018weyl}. 
\end{theorem}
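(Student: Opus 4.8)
The plan is to complement the lower bound of Theorem~\ref{Thm: lower bound weyl law} with the matching upper bound on $\limsup_{p\to\infty}p^{-\frac1{l+1}}\omega_p^G(M,g_{_M})$, and then to note that the two expressions on the right of the asserted identity coincide. The latter is the easy point: since $\tilde g_{_{M/G}}=\vartheta^{2/l}g_{_{M/G}}$ (see (\ref{Eq: rescaled metric})) is a metric on the $(l+1)$-dimensional space $M^{prin}/G$, its Riemannian volume density equals $\vartheta^{\frac{l+1}{l}}$ times that of $g_{_{M/G}}$, and applying the co-area formula for the Riemannian submersion $\pi\colon M^{prin}\to M^{prin}/G$ (as in (\ref{Eq: co-area for rectifiable set})) to the $G$-invariant weight $\vartheta^{1/l}$, together with $\mH^{n+1}(M\setminus M^{prin})=0$, gives
\[
{\rm Vol}(M^{prin}/G,\tilde g_{_{M/G}})=\int_{M^{prin}}\vartheta(G\cdot q)^{\frac1l}\,d\mH^{n+1}(q)=\int_M\vartheta(G\cdot q)^{\frac1l}\,d\mH^{n+1}(q),
\]
which is finite because $\vartheta$ is bounded on the compact $M$.

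For the upper bound I would run the cut-and-inflate scheme outlined in the introduction. Fix $\eta>0$. Using that $M^{prin}$ is open and dense with $\mH^n(M\setminus M^{prin})<\infty$ and choosing a generic small tube radius (a Sard-type argument for the distance to $M\setminus M^{prin}$), produce a compact connected $G$-invariant domain $\Omega$ with $\Clos(\Omega)\subset M^{prin}$, whose boundary $\partial\Omega$ is a finite union of smooth embedded $G$-hypersurfaces meeting transversally (parts of $\partial M$ together with a level set of the tube radius), and with $\int_{M\setminus\Omega}\vartheta^{1/l}\,d\mH^{n+1}<\eta$. Then construct a $G$-equivariant Lipschitz surjection $F\colon\Omega\to M$ which is the identity outside a small $G$-invariant tube of $M\setminus M^{prin}$ and which, working up the orbit-type stratification and using the slice theorem, radially ``blows down'' the removed tube onto $M\setminus M^{prin}$ while keeping both the normal and the tangential distortion at most $1+\epsilon_\eta$, with $\epsilon_\eta\to0$ as $\eta\to0$; one further arranges that $F$ is a diffeomorphism away from a set of full codimension, so that $F_*[\Omega]=[M]$ in $H_{n+1}(M,\partial M;\mZ_2)$ and ${\rm Lip}(F)\le1+\epsilon_\eta$.

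Given any $\Phi\in\mathcal{P}_p^G(\Omega)$, which by Proposition~\ref{Prop: p-width using M-continuous map} may be taken $\M$-continuous, the composition $F_\#\circ\Phi$ is an $\M$-continuous map into $\Z_n^G(M,\partial M;\mZ_2)$ (hence without concentration of mass on orbits, by Lemma~\ref{Lem: no concentration of mass on orbits}); by naturality of the Almgren isomorphism (Theorem~\ref{Thm: Almgren isomorphism}) under $F_\#$ together with $F_*[\Omega]=[M]$, it is again a $(G,p)$-sweepout of $M$. Tracking the mass of $F_\#(\partial W)$ for $W\in\C^G(\Omega)$ through the Lipschitz map $F$ and bounding the boundary contributions that come from $\partial\Omega$ by a fixed constant $C_0=C_0(M,G)$ depending only on $\mH^n(M\setminus M^{prin})$ and $\mH^n(\partial M)$, one gets $\M(F_\#\Phi(x))\le{\rm Lip}(F)^n\big(\M(\Phi(x))+C_0\big)$ for all $x$, hence $\omega_p^G(M,g_{_M})\le{\rm Lip}(F)^n\big(\omega_p^G(\Omega,g_{_M})+C_0\big)$. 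Dividing by $p^{\frac1{l+1}}$, letting $p\to\infty$ so that the $C_0$-term disappears, and invoking Corollary~\ref{Cor: weyl law for single type} for $\Omega\subset\subset\widetilde{M}^{prin}$ (with $\Omega^{prin}=\Omega$), we obtain
\[
\limsup_{p\to\infty}p^{-\frac1{l+1}}\omega_p^G(M,g_{_M})\le(1+\epsilon_\eta)^n\,a(l)\Big(\int_\Omega\vartheta^{1/l}\,d\mH^{n+1}\Big)^{\frac l{l+1}}\le(1+\epsilon_\eta)^n\,a(l)\Big(\int_M\vartheta^{1/l}\,d\mH^{n+1}\Big)^{\frac l{l+1}}.
\]
Letting $\eta\to0$ and combining with Theorem~\ref{Thm: lower bound weyl law} finishes the proof.

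I expect the construction of the $G$-equivariant inflating map $F$ with ${\rm Lip}(F)\to1$ to be the main obstacle: near the higher-codimension strata (and the non-manifold points) of $M\setminus M^{prin}$ the removed set is a cone rather than a cylinder, so the radial blow-down has to be carried out consistently across the whole stratification while keeping all distortions close to $1$ and preserving $G$-equivariance. By comparison, the cohomological bookkeeping and the reduction to $\M$-continuous sweepouts are routine given Theorem~\ref{Thm: Almgren isomorphism} and Proposition~\ref{Prop: p-width using M-continuous map}.
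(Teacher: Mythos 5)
Your plan is essentially the paper's: establish the lower bound via Theorem~\ref{Thm: lower bound weyl law}, cut off a small $G$-invariant tube around $M\setminus M^{prin}$ (and $\partial M$) to get a compact $G$-domain $\Omega\subset\subset M^{prin}$ with piecewise smooth boundary, inflate $\Omega$ back to $M$ with a $G$-equivariant map $F$ of Lipschitz constant close to $1$, push sweepouts forward, and invoke Corollary~\ref{Cor: weyl law for single type} on $\Omega$. The bookkeeping you describe (co-area identification of the two right-hand sides, $\M(F_\#\Phi(x))\le{\rm Lip}(F)^n(\M(\Phi(x))+C_0)$, $C_0$ depending on $\mH^n(M\setminus M^{prin})$ and $\mH^n(\partial M)$, the additive error vanishing after dividing by $p^{1/(l+1)}$) all matches the paper.

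However, you flag the construction of $F$ as ``the main obstacle'' without resolving it, and this is the genuine crux. Your one-shot formulation --- a single map blowing down a tube around the full stratified set $M\setminus M^{prin}$ with ${\rm Lip}(F)\to1$ --- runs into exactly the conical-neighborhood difficulty you name: near the places where strata of different codimension meet, the removed tube is not a disk bundle, and a radial collapse with near-isometric distortion is not straightforward to write down. The paper sidesteps this by an \emph{iterative} construction: it enumerates the finitely many non-principal orbit-type strata, and at the $i$-th step takes $N_i=\Clos(\widetilde M_{(H_{j_i})})$, which is a single \emph{closed $G$-submanifold} of $\widetilde M$. A normal-exponential-map retraction of the tube $\tB_{t_i}(N_i)$ onto $N_i$ (exactly as in (\ref{Eq: compresing}) for $\partial M$) then gives a $G$-equivariant $F_i$ with ${\rm Lip}(F_i)\le 1+\epsilon$, while Sard's theorem is used at each stage to keep $\partial M_i$ piecewise smooth with transverse intersections. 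Composing $F=F_0\circ F_1\circ\cdots\circ F_I$ (with $F_0$ handling $\partial M$) gives ${\rm Lip}(F)\le(1+\epsilon)^{I+1}$, which still $\to1$ since $I$ is fixed once and for all. This ordering-and-iteration is the idea your proposal is missing; the rest is as you sketched. (One small further remark: the paper does not push forward $\Phi$ directly but lifts to the double cover $SX=\{(x,\Omega):\Phi(x)=[\partial\Omega]\}$, applies $F$ to the Caccioppoli sets, and then takes boundaries; this guarantees the resulting map lands in $\Z_n^G(M,\partial M;\mZ_2)=\{[\partial\Omega]:\Omega\in\C^G(M)\}$ without having to argue separately that $F_\#$ preserves the boundary-type condition.)
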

\begin{proof}
	By Theorem \ref{Thm: lower bound weyl law}, it is sufficient to show	
	\begin{equation}\label{Eq: upper bound weyl law}
		\limsup_{p\to\infty}p^{-\frac{1}{l+1}}\omega_p^G(M, g_{_M}) \leq  a(l){\rm Vol}(M^{prin}/G, \tilde{g}_{_{M/G}})^{\frac{l}{l+1}}.
	\end{equation}
	Then the desired equality follows from the co-area formula and $\mH^{n+1}(M\setminus M^{prin}) = 0$. 

	Fix any $\epsilon>0$. 
	Let $\widetilde{M}$, $g_{_{\widetilde{M}}}$, $\tilde{g}_{_{\widetilde{M}/G}}$, and $\{\widetilde{M}_{(H_j)}\}_{j=1}^J$ be defined as before so that $M$ is a $G$-invariant compact domain in $\widetilde{M}$ with smooth boundary $\partial M$. 
	
	Using the retraction map of the tubular neighborhood $\tB_r(\partial M)$, there exist $t_0>0$ sufficiently small and a smooth $G$-equivariant map $F_0: \widetilde{M}\to \widetilde{M}$ such that  
	\begin{itemize}
		\item[(a)] ${\rm Lip}(F_0)\leq 1+\epsilon$; 
		\item[(b)] $F_0( \Clos(\tB_{t_0}(\partial M)) ) = \partial M$ and $F_0(\widetilde{M} \setminus \tB_{t_0}(\partial M)) = \widetilde{M}$; 
		\item[(c)] $F_0(M \setminus \tB_{t_0}(\partial M)) ) = F_0(M \cup \tB_{t_0}(\partial M)) ) = M$. 
	\end{itemize}
	Indeed, let $\exp_{\partial M}^\bot$ be the normal exponential map of $\partial M$ in $(\widetilde{M}, g_{_{\widetilde{M}}})$. 
	Then for $R>0$ sufficiently small, $\exp_{\partial M}^\bot$ is a $(1+\epsilon)^{\frac{1}{8}}$-bilipschitz $G$-equivariant diffeomorphism in $\tB_{2R}(\partial M)$. 
	Take $0< t_0 < (1 - (1+ \epsilon)^{-\frac{1}{4}}) R$ and choose a smooth function $h: [0, 2R]\to [0, 2R]$ so that $h\llcorner [0, t_0] =0$, $h(t)=t$ for $t\in [R, 2R]$, and $0< h'(t) \leq (1+\epsilon )^{\frac{1}{4}} \frac{R}{R - t_0} < (1+\epsilon )^{\frac{1}{2}}$ for $t\in [t_0, 2R]$. 
	Thus, $h$ maps $[t_0, 2R]$ to $[0, 2R]$ with $h(t)\leq (1+\epsilon)^{\frac{1}{2}} t$. 
	We can then define the $G$-equivariant smooth map $F_0 : \widetilde{M} \to \widetilde{M}$ with ${\rm Lip}(F_0)\leq 1+\epsilon$ by
	\begin{equation}\label{Eq: compresing}
		 F_0(p)	:=  \begin{cases}
							p ,& {\rm if}~~ p \in \widetilde{M} \setminus \tB_{2R}(\partial M) ; \\
				 			\exp_{\partial M}^\bot\left( \frac{h(|v|)}{|v|} v \right) ,& {\rm if}~~ p = \exp_{\partial M}^\bot(v) \in \tB_{2R}(\partial M).
						\end{cases}
	\end{equation}
	One can easily verify that $F_0$ satisfies (a)-(c). 
	
	Denote by $M_1 :=M$. 
	Then we take any non-principal orbit type stratum $\widetilde{M}_{(H_{1})}$. 
	Note $N_1 := \Clos(\widetilde{M}_{(H_{1})})$ is a closed $G$-invariant submanifold of $\widetilde{M}$. 
	Hence, considering the tubular $G$-neighborhood $\tB_r(N_1)$ in $\widetilde{M}$, there exists a positive number $t_1>0$ sufficiently small and a $G$-equivariant map $F_1: \widetilde{M}\to \widetilde{M}$ such that ${\rm Lip}(F_1)\leq 1+\epsilon$, $F_1( \Clos(\tB_{t_1}(N_1)) ) = N_1$, $F_1(\widetilde{M} \setminus \tB_{t_1}(N_1)) = \widetilde{M}$, and
	\[ M \setminus \tB_{t_0}(\partial M) \subset  F_1\Big( M_{1} \setminus \big( \tB_{t_1}(N_1) \cup \tB_{t_1}(\partial M_1) \big) \Big) \subset  F_1\big( M_{1} \cup \tB_{t_1}(\partial M_1) \big)  \subset M \cup \tB_{t_0}(\partial M). \]
	The construction of $F_1$ is similar to $F_0$ in (\ref{Eq: compresing}). 
	Note for $t_1\in (0,t_0)$ small enough, $F_1\big( M_{1} \setminus \big( \tB_{t_1}(N_1) \cup \tB_{t_1}(\partial M_1) \big) \big)$ and $ F_1\big( M_{1} \cup \tB_{t_1}(\partial M_1) \big)$ are sufficiently close to $M_{1}=M$. 
	Hence, combining this with the fact that $M \setminus \tB_{t_0}(\partial M)\subset \subset M\subset \subset M \cup \tB_{t_0}(\partial M)$, we have the above inclusion relationship for $t_1>0$ sufficiently small. 
	
	Additionally, by Sard's theorem, we can choose $r_1\in(0, t_1)$ such that $\partial \tB_{r_1}(N_1)$ intersects $\partial M_1$ transversally. 
	Then we define 
	\[M_2 := M_1 \setminus \tB_{r_1}(N_1) = M\setminus \tB_{r_1}(N_1)  ,\]
	which is a compact domain in $\widetilde{M}$ with piecewise smooth boundary $\partial \big(M\setminus \tB_{r_1}(N_1) \big)$ satisfying $M_2\cap N_1 = \emptyset$. 
	Moreover, we have
	\begin{equation}\label{Eq: compare sets}
		M_1 \setminus \big( \tB_{t_1}(N_1) \cup \tB_{t_1}(\partial M_1) \big)  ~\subset \subset ~ M_2 ~\subset \subset ~ M_1 \cup \tB_{t_1}(\partial M_1),
	\end{equation}
	by the constructions. 
	
	Next, consider the orbit type stratum $\widetilde{M}_{(H_{2})}$. 
	If $\widetilde{M}_{(H_{2})}\subset \Clos(\widetilde{M}_{(H_{1})})=N_1$, then $M_2\cap \widetilde{M}_{(H_{2})} = \emptyset$, and thus we just skip to the procedure for $\widetilde{M}_{(H_{3})}$. 
	Now, we suppose $\widetilde{M}_{(H_{2})} \cap N_1 =\emptyset$. 
	Denote $N_2 := \Clos(\widetilde{M}_{(H_{2})})$ to be a closed $G$-invariant submanifold of $\widetilde{M}$. 
	As before, using the retraction in the tubular $G$-neighborhood $\tB_r(N_2)$, we have positive numbers $t_2>r_2>0$ sufficiently small and a $G$-equivariant map $F_2: \widetilde{M}\to \widetilde{M}$ such that ${\rm Lip}(F_2)\leq 1+\epsilon$, $F_2( \Clos(\tB_{t_2}(N_2)) ) = N_2$, $F_2(\widetilde{M} \setminus \tB_{t_2}(N_2)) = \widetilde{M}$, 
	\begin{eqnarray*}
		M_1 \setminus \big( \tB_{t_1}(N_1) \cup \tB_{t_1}(\partial M_1) \big) &\subset & F_2\big( M_{2} \setminus \big( \tB_{t_2}(N_2) \cup \tB_{t_2}(\partial M_2) \big) \big) 
		\\&\subset & F_2\big( M_{2} \cup \tB_{t_2}(\partial M_2) \big)  ~\subset ~ M_1 \cup \tB_{t_1}(\partial M_1) 
	\end{eqnarray*}
	and $\partial \tB_{r_2}(N_2)$ intersects $\partial M_2$ transversally.
	Define then 
	\[M_3  := M_2 \setminus \tB_{r_2}(N_2),\]
	which is a compact domain in $\widetilde{M}$ with piecewise smooth boundary $\partial M_3$ so that 
	\[M_2 \setminus \big( \tB_{t_2}(N_2) \cup \tB_{t_2}(\partial M_2) \big)  ~\subset \subset  ~M_3 ~\subset \subset ~ M_2 \cup \tB_{t_2}(\partial M_2).\]

	Since the number of non-principal orbit type strata $\{ \widetilde{M}_{(H_j)} \}_{j=1}^J$ is finite, we can repeat this procedure for a finite number of times, say $I$ times, and obtain $\{t_i \}_{i=0}^I$, $\{r_i \}_{i=0}^I$, $\{F_i\}_{i=0}^I$, $\{N_i\}_{i=0}^I$, and $\{M_i\}_{i=0}^{I+1}$, such that $t_0>r_0:=0$, $M_0:=M$, $N_0:=\emptyset$, and
	\begin{itemize}
		\item[(i)] for each $i\in \{1,\dots, I \}$, $t_i > r_i >0$;
		\item[(ii)] for each $i\in \{1,\dots, I \}$, $N_i = \Clos(\widetilde{M}_{(H_{j_i})})$ for some $1\leq j_i \leq J$ is the closure of an orbit type stratum, and 
		$$\cup_{i=1}^I N_i = \cup_{j=1}^J \widetilde{M}_{(H_{j})} = \widetilde{M}\setminus \widetilde{M}^{prin};$$
		\item[(iii)] for each $i\in \{1,\dots, I+1 \}$, $M_i = M_{i-1}\setminus \tB_{r_{i-1}}(N_{i-1})$ is a compact domain in $\widetilde{M}$ with piecewise smooth boundary so that $M_i \cap \big( \cup_{k=0}^{i-1} N_k  \big) = \emptyset$ and 
		$$ M_{i-1} \setminus \big( \tB_{t_{i-1}}(N_{i-1}) \cup \tB_{t_{i-1}}(\partial M_{i-1}) \big) ~ \subset \subset ~ M_i ~ \subset \subset ~ M_{i-1} \cup \tB_{t_{i-1}}(\partial M_{i-1}) ;$$ 
		\item[(iv)] for each $i\in \{1,\dots, I \}$, $F_i: \widetilde{M}\to \widetilde{M}$ is a $G$-equivariant map satisfying 
			\begin{eqnarray*}
				M_{i-1} \setminus \big( \tB_{t_{i-1}}(N_{i-1}) \cup \tB_{t_{i-1}}(\partial M_{i-1}) \big) & \subset & F_i\big( M_{i} \setminus \big( \tB_{t_i}(N_i) \cup \tB_{t_i}(\partial M_{i}) \big) \big) 
	 			\\
	 			&\subset & F_i(M_{i+1})
	 			\\
	 			&\subset & F_i\big( M_{i} \cup \tB_{t_i}(\partial M_{i}) \big)  ~\subset~ M_{i-1} \cup \tB_{t_{i-1}}(\partial M_{i-1}),
			\end{eqnarray*}
			and ${\rm Lip}(F_i)\leq 1+\epsilon$.
	\end{itemize}
	Clearly, it follows from (ii)(iii) that $M_{I+1}\subset M$ is a compact domain in $\widetilde{M}^{prin}$ with piecewise smooth boundary. 
	Therefore, we can apply Corollary \ref{Cor: weyl law for single type} to $M_{I+1}$, and see that
	\begin{equation}\label{Eq: wely law for the last cutting manifold}
		\lim_{p\to\infty}p^{-\frac{1}{l}} \omega^G_p(M_{I+1}, g_{_{M}}) = a(l) {\rm Vol}\big(M_{I+1}/G, ~\tilde{g}_{_{M/G}} \big)^{\frac{l-1}{l}}. 
	\end{equation}

	Next, take any $\M$-continuous $(G,p)$-sweepout $\Phi : X\to \Z_{n}^G(M_{I+1},\partial M_{I+1}; \mZ_2)$. 
	Define 
	\[SX := \left\{ (x, \Omega): x\in X, \Phi(x) = [\bd \Omega] \right\} ~ \subset ~ X\times \C^G(M_{I+1}). \]
	Let $\tau: SX\to X$ and $\hat{\Xi}: SX\to \C^G(M_{I+1})$ be the natural projections. 
	Then, one can verify that $\tau: SX\to X$ is a double cover of $X$, and $\hat{\Xi}$ is $\F$-continuous so that $\Phi\circ\tau = [\partial\circ\hat{\Xi}]$ (cf. \cite[Lemma 4.3, Page 954]{liokumovich2018weyl}). 
	Indeed, for any $x\in X$ and $\Phi(x)=[\bd \Omega]$, we have $\tau^{-1}(x)=\{(x,\Omega), (x, M_{I+1}\setminus\Omega)\}$. 

	By (iv) and (c), we have $\big(F_0\circ F_1\circ \cdots \circ F_{I}\big)(M^{I+1}) = M_0=M$. 
	Therefore, 
	\[ \Xi := \big(F_0\circ F_1\circ \cdots \circ F_{I}\big) \circ \hat{\Xi} ~:~ SX\to \C^G(M) \]
	is an $\F$-continuous map so that $\Xi(y_1) = M \setminus  \Xi(y_2)$ for any $\{y_1, y_2\} = \tau^{-1}(x)$, $x\in X$.
	Thus, $\Xi$ induces an $\F$-continuous map $\Psi: X\to \Z_{n}^G(M,\partial M; \mZ_2)$ so that $\Psi\circ\tau = [\partial\circ\Xi]$. 
	
	Additionally, since $\M(\bd \Omega) \leq \M(\Phi(x)) + \mH^n(\bd M_{I+1}) $ for any $(x,\Omega)\in SX$, we conclude from ${\rm Lip}(F_i)\leq 1+\epsilon$ that 
	\[\M(\Psi(x)) \leq (1+\epsilon)^{n(I+1)}\M(\Phi(x)) + (1+\epsilon)^{n(I+1)}\mH^n(\partial M_{I+1} ), \qquad {\rm for~all~} x \in X .\]
	Note $\mH^n(\partial M_{I+1} ) \leq  \mH^n(\partial M)+\sum_{i=1}^I \mH^n\big( M \cap \partial \tB_{r_{i}}(N_i) \big)$, and for every $i=1,\dots, I$, $\lim_{r\to 0}\mH^n\big( M \cap \partial \tB_{r}(N_i) \big) < 2\mH^n\big( M \cap N_{i} \big) + 1$. 
	Hence, we can take a uniform positive constant $C= C(M,G) > \mH^n(\partial M) + 2\mH^n(M\setminus M^{prin}) ) + 1$ so that $\mH^n(\partial M_{I+1} ) \leq C$ for any $\epsilon>0$ and $\{r_i\}_{i=1}^{I}$ sufficiently small. 
	Therefore, 
	\[\sup_{x\in X}\M(\Psi(x)) \leq (1+\epsilon)^{n(I+1)}\sup_{x\in X} \M(\Phi(x)) + (1+\epsilon)^{n(I+1)}C .\]
	
	As in the proof of \cite[Claim in Page955]{liokumovich2018weyl}, $\Psi$ is a $(G,p)$-sweepout with no concentration of mass on orbits. 
	Since $\Phi$ is arbitrary, we have 
	\[ \omega^G_p(M, g_{_{M}}) \leq (1+\epsilon)^{n(I+1)}\omega^G_p(M_{I+1}, g_{_{M}}) + (1+\epsilon)^{n(I+1)}C . \]
	After dividing the above inequality by $p^{-\frac{1}{l+1}}$, we can use (\ref{Eq: wely law for the last cutting manifold}) to show
	\[\limsup_{p\to\infty} p^{-\frac{1}{l+1}}\omega^G_p(M, g_{_{M}}) \leq (1+\epsilon)^{n(I+1)} a(l){\rm Vol}\big(M_{I+1}/G, \tilde{g}_{_{M/G}} \big)^{\frac{l}{l+1}}.\]
	Finally, by taking $\epsilon\to 0$, (\ref{Eq: upper bound weyl law}) follows from the fact that $M_{I+1}\subset  M^{prin}$. 
\end{proof}

\section{Density of $G$-invariant minimal hypersurfaces}\label{Sec: denseness of FBMHs}

In the final section, we use the Weyl law for the equivariant volume spectrum to show the generic density of $G$-invariant minimal hypersurfaces with free boundary. 

To begin with, let us fix some notations for simplicity. 
Denote by $\Gamma^G$ the set of all $G$-invariant smooth Riemannian metrics on $M$ endowed with the $C^\infty$ topology. 
Then for any $\gamma\in \Gamma^G$, let $\mathcal{N}_G(\gamma)$ be the set of almost properly embedded $G$-invariant FBMHs in $(M,\gamma)$. 
Additionally, define 
\begin{equation}\label{Eq: p-min-max hypersurfaces}
	\mathcal{M}_{G,p}(\gamma):= \left\{\begin{array}{l|l} \Sigma   & \begin{array}{l} \Sigma \mbox{ is a free boundary min-max $((3^p)^{3^p},G)$-hypersurface} \\ \mbox{in $(M,\gamma)$ with multiplicity so that $\|\Sigma\|(M)=\omega^G_p(M,\gamma)$} \end{array}\end{array}\right\},
\end{equation}
which is a {\em non-empty} compact set by Theorem \ref{Thm: compactness theorem}, \ref{Thm: Gp-width realization} provided (\ref{Eq: assumption closed}) or (\ref{Eq: assumption with boundary}). 
Let
\begin{equation}\label{Eq: min-max hypersurfaces}
	\mathcal{M}_{G}(\gamma):= \bigcup_{p\geq 1} 	\mathcal{M}_{G,p}(\gamma).
\end{equation}

Recall that an almost properly embedded FBMH $(\Sigma,\bd\Sigma)$ in $(M,\bd M)$ is said to be {\em degenerate} if there exists $0\neq X\in \mfX^\perp(\Sigma)$ with $Q(X,X)=0$, where $Q$ is defined in (\ref{Eq: second variation formula}). 
Such a vector field is known as a non-trivial {\em Jacobi field} of $\Sigma$. 
Next, we introduce the notations of equivariant bumpy metrics (see \cite{white2017bumpy}\cite{ambrozio2018compactness}\cite{franz2021equivariant}).

\begin{definition}\label{Def: G-bumpy}
	Let $\gamma$ be a $G$-invariant Riemannian metric on M. 
	Then we say $\gamma$ is {\em $G$-bumpy} if no finite cover of any almost properly embedded $G$-invariant FBMH in $M$ admits a non-trivial Jacobi field. 
	
	Denote by $\Gamma^{GB}$ the set of all $G$-bumpy metrics $\gamma\in \Gamma^G$. 
\end{definition}

Note the Jacobi field need not be $G$-invariant in the above definition. 
Moreover, the following proposition indicates the $G$-bumpy metrics are $C^\infty_G$-generic and $G$-invariant FBMHs are $C^\infty_G$-generic countable. 

\begin{proposition}\label{Prop: G-bumpy is dense}
	If ${\rm Cohom}(G)\geq 2$. Then
	\begin{itemize}
		\item[(i)] $\Gamma^{GB}$ is a second-category subset of $\Gamma^G$; 
		\item[(ii)] if $\gamma\in \Gamma^{GB}$ and $M,G$ satisfy either (\ref{Eq: assumption closed}) or (\ref{Eq: assumption with boundary}), then $\mathcal{N}_G(\gamma)$ is countable.
	\end{itemize}
\end{proposition}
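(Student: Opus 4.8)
The plan is to reduce both statements to known (non-equivariant) bumpy metric results, the equivariant transversality machinery of White and its equivariant refinement by Franz--Ketover--Marx-Kuo (the references \cite{white2017bumpy}, \cite{ambrozio2018compactness}, \cite{franz2021equivariant} cited just before the statement), via the quotient picture already developed in Section \ref{Sec: weyl law}. First I would set up, for each closed subgroup $H\leq G$ occurring as an isotropy group and each connected component of the orbit-type stratum $M_{(H)}$, the structure of almost properly embedded $G$-invariant FBMHs intersecting that stratum: on the principal part such a hypersurface $\Sigma$ descends by Proposition \ref{Prop: reduce Caccioppoli $G$-sets} / Lemma \ref{Lem: reduce rectifiable G-set} to a hypersurface $\pi(\Sigma)$ in the (possibly singular, but on $M^{prin}$ smooth) quotient, and being minimal for $\gamma$ is equivalent to $\pi(\Sigma)$ being minimal for the weighted metric $\tilde g_{_{M/G}}$. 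A Jacobi field of $\Sigma$ (or of a finite cover of $\Sigma$) that is $G$-invariant corresponds to a Jacobi field downstairs for the weighted minimal surface equation; a general, non-$G$-invariant Jacobi field is handled by passing to the appropriate finite cover and using that the slice representation (\ref{Eq: local trivialization}) is trivial on the principal stratum, so near the lower strata one must additionally invoke the Fermi-coordinate / equivariant normal form from \cite[Appendix B]{wang2023min} to control the behaviour of $\Sigma$ as it approaches $M\setminus M^{prin}$.

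For part (i), I would run the standard White-type argument in the $G$-equivariant category. Define, for each ``combinatorial type'' of $G$-invariant FBMH (a choice of isotropy data, topological type, and a bound on area and on the genus/number of boundary components), the universal space of pairs $(\gamma,\Sigma)$ with $\gamma\in\Gamma^G$ and $\Sigma$ a $G$-invariant FBMH of that type in $(M,\gamma)$; show this is a separable Banach manifold and that the projection to $\Gamma^G$ is Fredholm of index $0$. The key point is that the kernel of the linearization at $(\gamma,\Sigma)$ is exactly the space of Jacobi fields of $\Sigma$ (together with those of finite covers when one also imposes the covering condition), and one perturbs $\gamma$ among $G$-invariant metrics to kill this kernel; equivariance of the perturbation is arranged by averaging over $G$ against the Haar measure $\mu$, exactly as in the proof of Proposition \ref{Prop: reduce Caccioppoli $G$-sets}. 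Sard--Smale then gives that the set of $\gamma$ for which every $G$-invariant FBMH of that type (and its finite covers) is non-degenerate is residual; taking the countable intersection over all combinatorial types, of which there are countably many since the area bound can be taken in $\mathbb N$ and there are finitely many orbit types, produces $\Gamma^{GB}$ as a countable intersection of open dense sets, hence second-category. The equivariant free boundary version of White's structure theorem and of the bumpy metric theorem is precisely what \cite{franz2021equivariant} supplies (and \cite{ambrozio2018compactness} in the closed case); I would cite these and indicate the free-boundary modifications following \cite{wang2023min}.

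For part (ii), assume $\gamma\in\Gamma^{GB}$ and that $M,G$ satisfy (\ref{Eq: assumption closed}) or (\ref{Eq: assumption with boundary}), so the regularity theory applies and every element of $\mathcal N_G(\gamma)$ is a genuine almost properly embedded $G$-invariant FBMH (with singular set of dimension $\le n-7$ in the closed higher-dimensional case). The argument is the usual compactness-plus-rigidity dichotomy. If $\mathcal N_G(\gamma)$ were uncountable, then since each $G$-invariant FBMH has positive area and the orbit-volume function $\vartheta$ is bounded below on $M^{prin}$, there would be a sequence $\{\Sigma_i\}\subset\mathcal N_G(\gamma)$ of pairwise distinct hypersurfaces with uniformly bounded area — either directly, if areas are bounded on an uncountable subfamily, or after noting that for each $A\in\mathbb N$ the subfamily with area $\le A$ must be uncountable for some $A$. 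By the equivariant compactness theorem for minimal $G$-hypersurfaces with bounded area and bounded index (here the relevant index bound is automatic because these are honest minimal surfaces, and one uses \cite{guang2021compactness} for the free-boundary stable pieces as in the proof of Theorem \ref{Thm: compactness theorem}), a subsequence converges in the varifold sense, with locally smooth graphical convergence away from a finite union of orbits $\mathcal Y$, to some $G$-invariant FBMH $\Sigma_\infty$ with multiplicity. Locally smooth graphical convergence of distinct minimal hypersurfaces to $\Sigma_\infty$ on $\Sigma_\infty\setminus\mathcal Y$ produces, by the standard argument (normalize the difference of graphs and pass to the limit), a non-trivial Jacobi field on a finite cover of $\Sigma_\infty$ — one passes to a cover to absorb the multiplicity of the limit. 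This contradicts $\gamma$ being $G$-bumpy. Hence $\mathcal N_G(\gamma)$ is countable.

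The main obstacle I anticipate is the behaviour near the lower-dimensional strata $M\setminus M^{prin}$: both the transversality set-up in (i) and the graphical-convergence/Jacobi-field extraction in (ii) are cleanest on $M^{prin}$, and one must verify that a non-trivial Jacobi field produced on the principal part does not fail to extend, or vanish identically, across $\mathcal Y\subset M\setminus M^{prin}$, and likewise that the equivariant perturbation of the metric can be made to control the minimal surface uniformly up to the singular orbits. This is where the equivariant tubular-neighbourhood and Fermi-coordinate technology of \cite{wang2023min} and the slice theorem (\ref{Eq: local trivialization}) do the real work, together with a removable-singularity statement for Jacobi fields across orbits of codimension $\ge 2$ in $M/G$; once that local analysis is in place, the rest is a routine transcription of White's arguments into the $G$-equivariant, free-boundary setting.
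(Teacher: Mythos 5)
Your outline for part (i) is in the same spirit as the paper's: the paper simply cites \cite[Theorem 1.3, 3.5]{wang2023equivariant} in the closed case and combines \cite[Theorem 2.8]{guang2021compactness} (the free-boundary bumpy metric theorem) with the equivariant argument of \cite[Theorem B.4]{franz2021equivariant} in the free-boundary case; you sketch the underlying Sard--Smale / White machinery that those references implement, which is more explanatory but not a different route.

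Part (ii) has a genuine gap. You claim that when extracting a convergent subsequence ``the relevant index bound is automatic because these are honest minimal surfaces.'' This is false: a smooth compact minimal hypersurface has finite index, but a family of them with a uniform area bound need not have a uniform index bound, and the compactness theorem \cite[Theorem 1.1]{guang2021compactness} you are invoking requires a bound on both area \emph{and} index. Without the index bound there is no reason for a subsequence to converge smoothly away from finitely many orbits, so the Jacobi-field extraction does not go through. The paper avoids this by not arguing with an uncountable family directly: it observes that $\mathcal{N}_G(\gamma)=\bigcup_{k\in\mathbb{N}}\{\Sigma\in\mathcal{N}_G(\gamma):\operatorname{Area}(\Sigma)\leq k,\ \operatorname{Index}(\Sigma)\leq k\}$, and shows each of these sets (with a simultaneous area and index bound, hence compactness applies) is finite; countability is then immediate. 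You should replace your ``uncountable implies area-bounded uncountable subfamily'' step with this double filtration by area and index, and then the compactness + non-degeneracy argument (via \cite[Theorem 1.2]{guang2021compactness} and \cite[Theorem 1.1]{wang2019compactness}) does produce the contradicting Jacobi field as you describe.

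A smaller point: the worries you raise at the end about extending Jacobi fields across $M\setminus M^{prin}$ and controlling behaviour near lower strata are not actually issues the paper's proof needs to confront, because the Jacobi field and the non-degeneracy condition live on the hypersurface $\Sigma_\infty$ (or its cover) itself, not on the quotient; the compactness theorems cited already deliver the Jacobi field on the limit hypersurface directly, so there is no removable-singularity step across singular orbits to carry out.
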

\begin{proof}
	For closed $G$-manifolds, (i) and (ii) follow directly from \cite[Theorem 1.3, 3.5]{wang2023equivariant}. 
	
	For compact manifolds with boundary, one can refer to \cite[Theorem 2.8]{guang2021compactness} (which is generalized from \cite[Theorem 9]{ambrozio2018compactness}) for a bumpy metrics theorem of FBMHs. 
	Additionally, \cite[Theorem B.4]{franz2021equivariant} gives the $G$-bumpy metrics theorem for finite group actions on $3$-manifolds, whose arguments are also valid for compact group actions on higher dimensional manifolds as in the closed case \cite[Theorem 1.3]{wang2023equivariant}. 
	Combining these ingredients, we conclude (i). 
	
	To show (ii) in case (\ref{Eq: assumption with boundary}), it is sufficient to show $\{\Sigma\in \mathcal{N}_G(\gamma): \Area(\Sigma)\leq k, {\rm Index}(\Sigma)\leq k\}$ is finite for every $k\in\N$ and $\gamma\in \Gamma^{GB}$. 
	Suppose this is not true.
	Then we have a sequence $\{\Sigma_i\}_{i\in\N}\subset \mathcal{N}_G(\gamma)$ with bounded area and index. 
	By the compactness theorem \cite[Theorem 1.1]{guang2021compactness}, $\Sigma_i$ converges (up to a subsequence) to an almost properly embedded FBMH $\Sigma_\infty\in \mathcal{N}_G(\gamma)$ with multiplicity. 
	Since $\Sigma_i$ and $\gamma$ are both $G$-invariant, we see $\Sigma_\infty$ is also $G$-invariant. 
	Finally, by \cite[Theorem 1.2]{guang2021compactness} and \cite[Theorem 1.1]{wang2019compactness}, there is a non-trivial Jacobi field on $\Sigma_\infty$ or its double cover, which contradicts the choice of $\gamma\in \Gamma^{GB}$. 
\end{proof}

\subsection{Density of $G$-invariant FBMHs}
We now combine the arguments in \cite{irie2018density}\cite{li2023existence} and \cite{wang2022existence} to show the existence of infinitely many $G$-invariant FBMHs. 

 Given any non-empty (resp. relative) open $G$-set $U\subset \bd M$ (resp. $U\subset M$), we define 
 \begin{equation}\label{Eq: density metrics p}
 	\Gamma^G_{U,p} := \left\{ \gamma\in \Gamma^G: ~\forall \Sigma\in \mathcal{M}_{G,p}(\gamma), ~\bd\Sigma\cap U \neq \emptyset  \mbox{ (resp. $\Sigma\cap U\neq \emptyset$)}\right\},
 \end{equation}
as a subset of $\Gamma^G$, where $p\in\mZ_+$. 
In addition, let
\begin{equation}\label{Eq: density metrics}
 	\Gamma^G_{U} := \bigcup_{p\geq 1} \Gamma^G_{U,p}. 
\end{equation}

\begin{theorem}\label{Thm: open dense of metrics}
	Suppose $M$ and $G$ satisfy either (\ref{Eq: assumption closed}) or (\ref{Eq: assumption with boundary}).  
	Then for any nonempty (resp. relative) open $G$-set $U \subset \bd M$ (resp. $U \subset M$), $\Gamma^G_U$ is an open dense subset of $ \Gamma^G$ in the $C^\infty$ topology. 
\end{theorem}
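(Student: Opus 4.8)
The plan is to establish both openness and density of $\Gamma^G_U=\bigcup_{p\geq1}\Gamma^G_{U,p}$ by an equivariant version of the perturbation argument of \cite{irie2018density}\cite{li2023existence}\cite{wang2022existence}, with the equivariant Weyl law (Theorem~\ref{Thm: main theorem weyl law}), the compactness of min-max $G$-hypersurfaces (Theorem~\ref{Thm: compactness theorem}) and the width realization (Theorem~\ref{Thm: Gp-width realization}) as the new inputs. For openness I would show each $\Gamma^G_{U,p}$ has $C^\infty$-closed complement: if $\gamma_i\to\gamma$ and for each $i$ there is $\Sigma_i\in\mathcal{M}_{G,p}(\gamma_i)$ whose support (resp.\ boundary) misses $U$, then Theorem~\ref{Thm: compactness theorem} for varying metrics produces a varifold subsequential limit $\Sigma_\infty$, a free boundary min-max $((3^p)^{3^p},G)$-hypersurface, and Lemma~\ref{Lem: Gp-width depends continuously on metric} gives $\|\Sigma_\infty\|(M)=\lim_i\omega_p^G(M,\gamma_i)=\omega_p^G(M,\gamma)$, so $\Sigma_\infty\in\mathcal{M}_{G,p}(\gamma)$. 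In the interior case, testing with small balls $B_r(q)\subset U$ of generic radius gives $\|\Sigma_\infty\|(B_r(q))=\lim_i\|\Sigma_i\|(B_r(q))=0$, whence $\spt\|\Sigma_\infty\|\cap U=\emptyset$; in the boundary case I would additionally use that the convergence in Theorem~\ref{Thm: compactness theorem} is locally smooth and graphical away from finitely many orbits to conclude $\partial\Sigma_\infty\cap U=\emptyset$, as in \cite{wang2022existence}. Either way $\gamma$ lies in the complement of $\Gamma^G_{U,p}$.

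For density, fix $\gamma\in\Gamma^G$ and suppose toward a contradiction that some $C^\infty$-neighbourhood $\mathcal{V}$ of $\gamma$ is disjoint from $\Gamma^G_U$. Using that $U$ is $G$-invariant, I would pick a non-trivial, non-negative, $G$-invariant symmetric $2$-tensor $h$ on $M$: with $\spt h\subset U$ in the interior case, and with $\spt h$ contained in a half-neighbourhood $\hat U$ of $\bar U$ in $M$ satisfying $\hat U\cap\partial M\subset U$ in the boundary case, chosen as in \cite{wang2022existence} so that any almost properly embedded FBMH meeting $\hat U$ has its free boundary in $U$; if needed one averages over $G$, which keeps $\spt h$ inside $U$ (resp.\ $\hat U$) by $G$-invariance. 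For $t\in[0,t_1]$ with $t_1$ small, $\gamma(t):=\gamma+th\in\mathcal{V}$. Since $h\geq0$, areas, and hence every $(G,p)$-width, only increase along $\gamma(t)$, so $t\mapsto\omega_p^G(M,\gamma(t))$ is non-decreasing, and it is continuous by Lemma~\ref{Lem: Gp-width depends continuously on metric}.

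Next, enlarging $\gamma$ strictly increases the ambient volume density and the orbit-volume function on the open set $\{h>0\}$, hence strictly increases $\int_M(\mathcal{H}^{n-l}(G\cdot q))^{1/l}\,d\mathcal{H}^{n+1}(q)$; by Theorem~\ref{Thm: main theorem weyl law} this gives $\lim_{p}p^{-1/(l+1)}\omega_p^G(M,\gamma(t_1))>\lim_{p}p^{-1/(l+1)}\omega_p^G(M,\gamma)$, so $\omega_p^G(M,\gamma(t_1))>\omega_p^G(M,\gamma(0))$ for some $p$. Fixing such a $p$: since $t\mapsto\omega_p^G(M,\gamma(t))$ is continuous and $\omega_p^G(M,\gamma(t_1))>\omega_p^G(M,\gamma(0))$, its upper right Dini derivative is strictly positive at some $t_0\in[0,t_1)$. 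I would then apply the equivariant analogue of the perturbation inequality of \cite[\S2]{irie2018density} (see also \cite{li2023existence}\cite{wang2022existence}), which I plan to prove from Theorem~\ref{Thm: Gp-width realization} together with the equivariant discretization, interpolation and min-max regularity of \cite{wang2023min}\cite{wang2023equivariant} --- the point being that maximal slices of almost optimal $(G,p)$-sweepouts of $(M,\gamma(t_0))$ subconverge as varifolds into $\mathcal{M}_{G,p}(\gamma(t_0))$ --- to obtain
\begin{equation*}
0<\limsup_{t\to t_0^+}\frac{\omega_p^G(M,\gamma(t))-\omega_p^G(M,\gamma(t_0))}{t-t_0}\leq\frac12\sup\Big\{\int\tr_\Sigma h\,d\|\Sigma\|:\ \Sigma\in\mathcal{M}_{G,p}(\gamma(t_0))\Big\}.
\end{equation*}
Hence some $\Sigma\in\mathcal{M}_{G,p}(\gamma(t_0))$ satisfies $\int\tr_\Sigma h\,d\|\Sigma\|>0$, so $\Sigma$ meets $\{h>0\}$ (as $h\geq0$): thus $\Sigma\cap U\neq\emptyset$ in the interior case, and $\Sigma$ meets $\hat U$, so $\partial\Sigma\cap U\neq\emptyset$ by the choice of $\hat U$, in the boundary case. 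In both cases $\gamma(t_0)\in\Gamma^G_{U,p}\subset\Gamma^G_U$, contradicting $\gamma(t_0)\in\mathcal{V}$; this proves density.

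I expect the main obstacle to be the derivative inequality above: it rests on showing that any sequence of $(G,p)$-sweepouts of $(M,\gamma(t_0))$ with maximal areas tending to $\omega_p^G(M,\gamma(t_0))$ has maximal slices subconverging (as varifolds) into $\mathcal{M}_{G,p}(\gamma(t_0))$, which is exactly where the equivariant min-max machinery of \cite{wang2023min}\cite{wang2023equivariant} and Theorem~\ref{Thm: Gp-width realization} enter. The rest is bookkeeping relative to \cite{irie2018density}\cite{wang2022existence}: checking that all perturbations, sweepouts and comparison maps can be chosen $G$-equivariant (harmless, since $U$ and the cycle space $\mathcal{Z}_n^G(M,\partial M;\mathbb{Z}_2)$ are $G$-invariant), and, for the boundary case $U\subset\partial M$, adapting the half-neighbourhood construction of \cite{wang2022existence} to the equivariant setting.
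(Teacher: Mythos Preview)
Your openness argument matches the paper's. For density, however, there is a logical gap in your final step, and the paper takes a different (and simpler) route that avoids your derivative inequality altogether.

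Recall the definition (\ref{Eq: density metrics p}): membership $\gamma\in\Gamma^G_{U,p}$ requires that \emph{every} $\Sigma\in\mathcal{M}_{G,p}(\gamma)$ satisfy $\Sigma\cap U\neq\emptyset$ (resp.\ $\partial\Sigma\cap U\neq\emptyset$). Your Dini-derivative inequality produces only \emph{some} $\Sigma\in\mathcal{M}_{G,p}(\gamma(t_0))$ with $\int\tr_\Sigma h\,d\|\Sigma\|>0$, hence meeting $\spt h$. This does not give $\gamma(t_0)\in\Gamma^G_{U,p}$: the contradiction hypothesis $\mathcal{V}\cap\Gamma^G_U=\emptyset$ only says that for each $\gamma'\in\mathcal{V}$ and each $p$ there is \emph{at least one} $\Sigma'\in\mathcal{M}_{G,p}(\gamma')$ avoiding $U$, which is perfectly compatible with the existence of another $\Sigma$ that meets $U$. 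No contradiction is reached. (The ``$\forall$'' in the definition is not gratuitous---it is precisely what makes the openness argument work, as your own proof of openness shows.)

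The paper's density argument exploits the contradiction hypothesis directly rather than through a derivative. One first passes to a $G$-bumpy metric $g_{_M}^0\in\B$ (Proposition~\ref{Prop: G-bumpy is dense}), so that $\mathcal{N}_G(g_{_M}^0)$ and hence the set $\mathcal{C}$ of $\N$-linear combinations of areas is countable. One then deforms along a family $g_{_M}^t\in\B$---in the interior case a conformal factor $(1+th)g_{_M}^0$ with $\spt h\subset U$; in the boundary case the pullback by an equivariant diffeomorphism pushing $M$ inward through $U$---chosen so that any $\Sigma_p^t\in\mathcal{M}_{G,p}(g_{_M}^t)$ avoiding $U$ (whose existence is guaranteed by the hypothesis) is automatically a $G$-invariant FBMH in $(M,g_{_M}^0)$ with the same area. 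Thus $\omega_p^G(M,g_{_M}^t)\in\mathcal{C}$ for all $t$ and $p$; continuity in $t$ (Lemma~\ref{Lem: Gp-width depends continuously on metric}) then forces each $t\mapsto\omega_p^G(M,g_{_M}^t)$ to be constant, contradicting the Weyl law since the weighted volume in Theorem~\ref{Thm: main theorem weyl law} is strictly monotone along the family. This uses only the ``$\exists\,\Sigma$ avoiding $U$'' furnished by the hypothesis, and needs no derivative inequality or convergence of maximal slices.
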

\begin{proof}
	Firstly, we claim $\Gamma^G_{U,p}$ is open for each $p\in\mZ_+$. 
	Otherwise, there exists $\gamma_0\in \Gamma^G_{U,p}$ and a sequence $\{\gamma_i\}_{i\in\N}\subset \Gamma^G\setminus \Gamma^G_{U,p}$ so that $\gamma_i\to\gamma_0$ in the smooth topology. 
	By definitions, we can take $\Sigma_i\in \mathcal{M}_{G,p}(\gamma_i)$ so that $\bd\Sigma_i\cap U=\emptyset$ (resp. $\Sigma_i\cap U =\emptyset$). 
	Note $\omega^G_p(M,\gamma_i)\to \omega^G_p(M,\gamma_0)$ by Lemma \ref{Lem: Gp-width depends continuously on metric}. 
	It then follows from the compactness theorem \ref{Thm: compactness theorem} that $\Sigma_i$ converges (up to a subsequence) smoothly away from a finite union of orbits $\mathcal{Y}=\{G\cdot p_k\}_{k=1}^K$ to some $\Sigma_0\in \mathcal{M}_{G,p}(\gamma_0)$. 
	Since $\mH^{n-1}(\mathcal{Y}) = 0$ by (\ref{Eq: assumption closed})(\ref{Eq: assumption with boundary}), we see from the smooth convergence that $\bd\Sigma_0\cap U=\emptyset$ (resp. $\Sigma_0\cap U =\emptyset$), which contradicts the choice of $\gamma_0\in \Gamma^G_{U,p}$. 
	
	Hence, $\Gamma^G_{U,p}$ (for all $p\in\mZ_+$) and $\Gamma^G_{U}$ are open in $\Gamma^G$. 
	Next, we show $\Gamma^G_{U}$ is dense in $\Gamma^G$. 
	
	Assume by contradiction that there exists an open set $\B$ in $\Gamma^G$ so that $\Gamma^G_U \cap \B = \emptyset$. 
	Then, it follows from the definitions (\ref{Eq: density metrics p})(\ref{Eq: density metrics}) that 
	\begin{equation}\label{Eq: all in M-U}
		\mbox{$\forall g_{_{M}}\in \B $, $p\in\mZ_+$},~\mbox{ $\exists \Sigma\in \mathcal{M}_{G,p}(g_{_M})$ }~ \mbox{ s.t. }~ \bd\Sigma\subset \bd M\setminus U ~({\rm resp.}~\Sigma\subset M\setminus U).
	\end{equation}
	Additionally, by Proposition \ref{Prop: G-bumpy is dense}(i)(ii), we can always find a $G$-bumpy metric $g_{_M}^0\in \B \cap \Gamma^{GB}$ so that $\mathcal{N}_G(g_{_M}^0)$ is a countable set. 
	Thus, we have the following countable set 
	\[ \mathcal{C} := \left\{ \sum_{i=1}^I m_i {\rm Area}_{g_{_M}^0}(\Sigma_i) : I\in\N, ~\{m_i\}_{i=1}^I\subset\N, ~ \{\Sigma_i\}_{i=1}^I\subset \mathcal{N}_G(g_{_M}^0)  \right\}. \]
	
	{\bf Case I:} {\em $U$ is a $G$-invariant open set in $\bd M$.} 
	
	In the $G$-manifold $\bd M$, we can take a principal orbit $G\cdot x \subset U$ by the density of $(\bd M)^{prin}$. 
	Since the inward unit normal of $M$ along $\bd M$ is $G$-invariant, we must have $G\cdot x \subset M^{prin}$ by \cite[Corollary 2.2.2]{berndt2016submanifolds}. 
	Now, we take a relative open $G$-neighborhood $K\subset M^{prin}$ of $G\cdot x$ with $K\cap\bd M\subset U $. 
	Let $X$ be a $G$-invariant vector field compactly supported in $K$ so that $X$ points inward $M$ everywhere on $\spt(X)\cap\bd M \neq \emptyset$. 
	Hence, the equivariant diffeomorphisms $\{f_t\}_{t\in [0,\delta)}$ generated by $X$ will push $M$ into a $G$-invariant sub-domain $f_t(M) \subset \subset M$ for $t\in (0,\delta)$. 
	By shrinking $\delta>0$, we can make the pull-back metric $g_{_M}^t := f_t^*g_{_M}^0 \in \B$ for all $t\in [0,\delta)$. 
	Therefore, 
	\begin{equation}\label{Eq: density: isometry}
		f_t: \left(M,g_{_M}^t \right)\to \left(f_t(M), g_{_M}^0\right)\subset \left(M,g_{_M}^0\right)
	\end{equation}
	is an isometry for all $t\in [0,\delta)$.

	Next, given any $t\in [0,\delta)$ and $p\in \mZ_+$, it follows from (\ref{Eq: all in M-U}) that there exists $\Sigma_p^t\in \mathcal{M}_{G,p}(g_{_M}^t)$ with $\bd \Sigma_p^t \subset \bd M\setminus U$. 
	Combining with (\ref{Eq: density: isometry}), we see $ f_t(\Sigma_p^t)$ is a free boundary min-max $((3^p)^{3^p},G)$-hypersurface in $(f_t(M), g_{_M}^0)$ with multiplicity so that
	\begin{equation}\label{Eq: density: boundary not in U}
		\bd f_t(\Sigma_p^t) \subset \bd f_t(M) \setminus f_t(U). 
	\end{equation}
	Additionally, noting $(f_t(M), g_{_M}^0)\subset (M,g_{_M}^0)$ is a $G$-invariant sub-domain, (\ref{Eq: density: boundary not in U}) also implies $\spt(f_t(\Sigma_p^t))\in \mathcal{N}_G(g^0_{_M})$, and thus
	\[ \omega^G_p(M,g^t_{_M}) = \|\Sigma_p^t\|_{g^t_{_M}}(M) = \|f_t(\Sigma_p^t)\|_{g_{_M}^0}(f_t(M)) =\|f_t(\Sigma_p^t)\|_{g_{_M}^0}(M) \in \C . \]
	Since $\mathcal{C}$ is countable and $\omega^G_p(M, g_{_M}^t)$ is continuous in $t$ (Lemma \ref{Lem: Gp-width depends continuously on metric}), we conclude that $\omega^G_p(M, g_{_M}^t) \equiv \omega^G_p(M, g_{_M}^0) $ for all $t\in [0,\delta)$, $p\in\mZ^+$. 
	This contradicts Theorem \ref{Thm: main theorem weyl law} because ${\rm Vol}(M^{prin}/G, \tilde{g}_{_{M/G}}^t) = {\rm Vol}((f_t(M))^{prin}/G, \tilde{g}_{_{M/G}}^0)$ is strictly decreasing. 

	{\bf Case II:} {\em $U$ is a $G$-invariant relative open set in $M$.} 
	
	Since $M^{prin}$ is dense in $M$, we can take a $G$-invariant smooth nonnegative function $h: M\to [0,\infty)$ so that $\spt(h)=K\subset\subset U $ and $h(x)>0$ for some $x\in U\cap M^{prin}$. 
	Consider the continuous curve $g_{_M}^t := (1+th)\cdot g_{_M}\in \B$, where $t\in [0,\delta)$ for some $\delta>0$ small enough. 
	
	Then for any $t\in [0, \delta)$ and $p\in\mZ_+$, let $\Sigma_p^t\in \mathcal{M}_{G,p}(g_{_M}^t)$ be given by (\ref{Eq: all in M-U}) so that $\Sigma_p^t\subset M\setminus U$. 
	Note $g_{_M}^t = g_{_M}^0$ in $M\setminus K$ for all $t\in [0,\delta)$. 
	Thus, we have $\spt(\Sigma_p^t)\in \mathcal{N}_G(g^0_{_M})$, which similarly implies $\{\omega^G_p(M, g_{_M}^t)\}_{p=1}^\infty \subset \mathcal{C}$ for all $t\in [0,\delta)$. 
	As in {\bf Case I}, since $\mathcal{C}$ is countable and $\omega^G_p(M, g_{_M}^t)$ is continuous in $t$ (Lemma \ref{Lem: Gp-width depends continuously on metric}), we see $\omega^G_p(M, g_{_M}^t) \equiv \omega^G_p(M, g_{_M}^0) $ for all $t\in [0,\delta)$ and $p\in\mZ^+$, which contradicts Theorem \ref{Thm: main theorem weyl law} as ${\rm Vol}(M^{prin}/G, \tilde{g}_{_{M/G}}^t)$ is strictly increasing. 
	
	Therefore, we have $\Gamma^G_U$ is dense in $\Gamma^G$ in both cases. 
\end{proof}

Finally, we show the proof of our main theorems (Theorem \ref{Thm: main density}, \ref{Thm: main infinitely many singular MHs}). 

\begin{proof}[Proof of Theorem \ref{Thm: main density}]
	Let $\{U_i\}_{i\in\N}$ be a countable basis of $\bd M$ (resp. $M$). 
	Since the quotient map $\pi$ is an open map, $\{\pi(U_i)\}_{i\in\N}$ is also a countable basis of $\bd M/G$ (resp. $M/G$). 
	Define $\widetilde{U}_i := G\cdot U_i = \pi^{-1}(\pi(U_i))$ for every $i\in\N$. 
	We conclude from Theorem \ref{Thm: open dense of metrics} that each $ \Gamma^G_{\widetilde{ U}_i}$, $i=1,2,\dots$, is open and dense in $\Gamma^G$. 
	Hence, the set $\cap_{i\in\N} \Gamma^G_{\widetilde{ U}_i}$ is $C^\infty$ Baire-generic (second-category) in $\Gamma^G$, which gives Theorem \ref{Thm: main density} by the definitions of $\widetilde{U}_i$ and $\Gamma^G_{\widetilde{ U}_i}$. 
\end{proof}

\begin{proof}[Proof of Theorem \ref{Thm: main infinitely many singular MHs}]
	Redefine $\mathcal{N}_G(\gamma)$ to be the set of closed $G$-invariant minimal hypersurfaces in $(M,\gamma)$ with a singular set of Hausdorff dimension no more than $n-7$. 
	As we mentioned in Remark \ref{Rem: singular MHs}, Theorem \ref{Thm: compactness theorem} and \ref{Thm: Gp-width realization} also hold for these closed singular $G$-invariant minimal hypersurfaces by the regularity results in \cite{schoen1981regularity}. 
	Consider 
	\[ \Gamma^G_0 := \left\{\gamma\in\Gamma^G :~ \mathcal{N}_G(\gamma) \mbox{ is finite} \right\},\]
	which is dense in $\interior(\overline{\Gamma^G_0})$. 
	Then, the proof of Theorem \ref{Thm: open dense of metrics} would carry over to show $\Gamma^G_U$ is open and dense in $\interior(\overline{\Gamma^G_0})$ by using $\Gamma^G_0, \interior(\overline{\Gamma^G_0})$ in place of $\Gamma^{GB},\Gamma^G$.  
	Let $\{U_i\}_{i\in\N}$ be a countable basis of $M$, and $\widetilde{U}_i := G\cdot U_i = \pi^{-1}(\pi(U_i))$. 
	Then we have $\cap_{i\in\N} \Gamma^G_{\widetilde{ U}_i}$ is $C^\infty_G$ Baire-generic (second-category) in $\interior(\overline{\Gamma^G_0})$, which implies $\Gamma^G_0\cap  \interior(\overline{\Gamma^G_0}) \subset \interior(\overline{\Gamma^G_0}) \setminus (\cap_{i\in\N} \Gamma^G_{\widetilde{ U}_i})$ is of the first category in $\Gamma^G$. 
	Finally, since $\bd (\overline{\Gamma^G_0}) $ is nowhere dense, we have $\Gamma^G_0$ is of the first category in $\Gamma^G$. 
\end{proof}

\bibliographystyle{abbrv}

\providecommand{\bysame}{\leavevmode\hbox to3em{\hrulefill}\thinspace}
\providecommand{\MR}{\relax\ifhmode\unskip\space\fi MR }
\providecommand{\MRhref}[2]{%
  \href{http://www.ams.org/mathscinet-getitem?mr=#1}{#2}}
\providecommand{\href}[2]{#2}

\bibliography{reference}   

\end{document}